\documentclass[oneside,reqno,11pt]{amsart}
\usepackage{amssymb,amsthm,amstext,amsfonts}
\usepackage[normalem]{ulem}
\usepackage{amscd,amstext,amsthm,amsfonts,latexsym}
\usepackage[shortlabels]{enumitem}
\usepackage[colorlinks=true,linkcolor=blue, urlcolor=black, citecolor=blue]{hyperref}
\hypersetup{
 citecolor=blue}
\theoremstyle{Theorem A}
\theoremstyle{Theorem B}
\theoremstyle{Theorem C}
\theoremstyle{Theorem D}
\theoremstyle{Theorem E}

\let\oldH\H

\allowdisplaybreaks

\makeatletter

\theoremstyle{definition}
\newtheorem{maintheorem}{Theorem}

\numberwithin{equation}{section}
\numberwithin{figure}{section}
\theoremstyle{plain}
\newtheorem*{cor*}{\protect\corollaryname}
\theoremstyle{plain}
\newtheorem{thm}{\protect\theoremname}[section]
\theoremstyle{definition}
\newtheorem{defn}[thm]{\protect\definitionname}
\theoremstyle{question}

\theoremstyle{remark}

\theoremstyle{plain}
\newtheorem{prop}[thm]{\protect\propositionname}
\theoremstyle{plain}
\newtheorem{lem}[thm]{\protect\lemmaname}
\theoremstyle{plain}

\usepackage{geometry}
\usepackage{amsmath}

\numberwithin{equation}{section}
\numberwithin{figure}{section}
\usepackage{enumitem}		
 \let\footnote=\endnote
\@ifundefined{lettrine}{\usepackage{lettrine}}{}

\theoremstyle{definition}

\def\L{\Lambda}

\def\D{\mathbb{D}}
\def\R{\mathbb{R}}
\def\C{\mathbb{C}}
\def\H{\mathbb{H}}
\def\N{\mathbb{N}}

\def\ep{\varepsilon}

\usepackage{float}

\keywords{}

\subjclass[2000]{}

\def\Si{\Sigma}

\def\D{\Delta}
\def\R{\mathbb{R}}

\def\C{\mathbb{C}}
\def\Grass{\text{Grass}}

\def\ep{\varepsilon}

\def\diml{\text{dim}_{\text{L}}}

\def\A{\mathcal{A}}
\def\B{\mathcal{B}}
\def\C{\mathcal{C}}
\def\D{\mathcal{D}}

\def\L{\mathcal{L}}
\def\M{\mathcal{M}_{\text{inv}}}

\def\Sig{\Sigma}
\def\glr{\text{GL}(d,\R)}
\def\gl3{\text{GL}(3,\R)}

\makeatother
\def\essinf{{\mathrm{ess inf}}}
\def\esssup{{\mathrm{ess sup}}}
\def\Mk{{\mathcal M}}
\def\Vk{{\mathcal V}}
\def\Ek{{\mathcal E}}

\DeclareMathOperator{\supp}{supp}

  \providecommand{\corollaryname}{Corollary}
  \providecommand{\definitionname}{Definition}
  \providecommand{\lemmaname}{Lemma}
  \providecommand{\propositionname}{Proposition}
  \providecommand{\remarkname}{Remark}
  \providecommand{\theoremname}{Theorem}
\providecommand{\theoremname}{Theorem}

\usepackage{tikz,xcolor,hyperref}

\definecolor{lime}{HTML}{A6CE39}
\DeclareRobustCommand{\orcidicon}{
	\begin{tikzpicture}
	\draw[lime, fill=lime] (0,0) 
	circle [radius=0.16] 
	node[white] {{\fontfamily{qag}\selectfont \tiny ID}};
	\draw[white, fill=white] (-0.0625,0.095) 
	circle [radius=0.007];
	\end{tikzpicture}
	\hspace{-2mm}
}
\foreach \x in {A, ..., Z}{\expandafter\xdef\csname orcid\x\endcsname{\noexpand\href{https://orcid.org/\csname orcidauthor\x\endcsname}
			{\noexpand\orcidicon}}
}

\renewcommand{\le}{\leqslant}
\renewcommand{\leq}{\leqslant}
\renewcommand{\geq}{\geqslant}
\renewcommand{\ge}{\geqslant}
\newcommand{\eps}{\varepsilon}

\author{Balázs Bárány \orcidB{} and Reza Mohammadpour \orcidA{} }

\address{Balázs Bárány, Department of Stochastics, Institute of Mathematics, Budapest University of Technology and Economics, M\oldH{u}egyetem rkp. 3., H-1111 Budapest, Hungary}
\email{barany.balazs@ttk.bme.hu}
\address{Reza Mohammadpour, Department of Mathematics, Uppsala University, Box 480, SE-75106, Uppsala, Sweden.}
\email{rmohammadpour70@gmail.com}
\date{\today}

\subjclass[2020]{28A80, 28D20, 37L30, 37D35}
\keywords{Self-affine set, multifractal analysis,  Lyapunov
exponents, thermodynamic formalism, typicality.}

\usepackage{amsmath}

\begin{document}

\title[Hausdorff Spectrum of Lyapunov Exponents for Self-Affine Systems]{Hausdorff Spectrum of Lyapunov Exponents for Higher-Dimensional Self-Affine Systems}

\maketitle

\begin{abstract}
We study the Hausdorff dimension of projections of level sets of Lyapunov exponents for affine iterated function systems. Assuming that the tuple of linear parts is typical, we obtain a variational formula in terms of pressure, topological entropy, and the Lyapunov dimensions of invariant measures corresponding to Lyapunov exponents. In $\R^3$, we show that this value equals the Hausdorff dimension of the canonical projection of the level set under the strong open set condition. We further extend the result to arbitrary dimensions, showing that the same formula holds for Lebesgue-almost every choice of translation vectors under a suitable contraction assumption.

\color{black}
\end{abstract}

\medskip

\section{Introduction}

The multifractal formalism of Birkhoff averages concerns the size of the set where the Birkhoff averages exist and equal a particular value (for instance, see \cite{FFW, BSS, IJ15, JT21, O, Olsen}). This set is called the \textit{level set}, and its size is determined using either the Hausdorff dimension or the topological entropy introduced by Bowen in \cite{Bowen}. Motivated by the study of the multifractal formalism of Birkhoff averages, the multifractal formalism of certain special sub-additive potentials $\Phi = \left\{\log \phi_n\right\}_{n=1}^{\infty}$ on sub-shifts of finite type has been studied in \cite{feng03, feng09, Moh22-Lyapunov, DGR19}. In this context, $\phi_n(x) = \left\|\prod_{i=0}^{n-1} \A \left(\sigma^i (x)\right)\right\|$, where $\A$ is a continuous function on a finite symbolic space $\Sigma$ taking values in the set of $d \times d$ matrices, $\sigma: \Sigma \to \Sigma$ is a full shift, and $\|\cdot\|$ denotes the operator norm. Moreover, by Oseledets' Multiplicative Ergodic Theorem, Lyapunov exponents exist for every ergodic invariant measure. Since there are many such measures, the Lyapunov exponents may take a huge variety of different values. Therefore, it is natural to study the size of the set of points where all Lyapunov exponents exist and take a particular value. This case is much more challenging than dealing with the size of the top Lyapunov exponent, as the behaviour of the vector of all Lyapunov exponents in matrix cocycles is much more complicated than that of the top Lyapunov exponent. 

Feng and Huang \cite{FH} calculated the entropy spectrum of all Lyapunov exponents for almost additive potentials, improving a result of Barreira and Gelfert in \cite{BG06} on Lyapunov exponents of nonconformal repellers. Recently, \cite{Moh22-entropy, Moh23, Mohammadpour-Varandas, Mohammadpour-survey} calculated the entropy spectrum of all Lyapunov exponents for generic matrix cocycles. 

As mentioned, we determine the size of the level set using either the Hausdorff dimension or the topological entropy. For the Hausdorff dimension, we need to introduce a geometrical structure. The well-known example of such a geometrical structure is a self-affine iterated function system satisfying some separation conditions.

A tuple $\Theta=\left(A_1+v_1, \ldots, A_N+v_N\right)$ of contractive invertible affine self-maps on $\R^d$ is called an {\it affine iterated function system (affine IFS)}. The associated tuple of matrices $\left(A_1, \ldots, A_N\right)$ is therefore an element of $\glr^N$ and satisfies $\max _{i \in\{1, \ldots, N\}}\left\|A_i\right\|<1$. There exists a unique non-empty compact set $X \subset \mathbb{R}^d$ such that
$$
X=\bigcup_{i=1}^N\left(A_i+v_i\right)(X) .
$$

In this case, the set $X$ is called a {\it self-affine set}.  We say that $\Theta$ satisfies a {\it strong open set condition (SOSC)} if there exists an open set $U \subset \mathbb{R}^d$ intersecting $X$ such that the elements of the union $\bigcup_{i=1}^N\left(A_i+v_i\right)(U)$ are pairwise disjoint, contained in $U$ and $X\cap U\neq\emptyset$. Furthermore, $\Theta$ satisfies the {\it strong separation condition (SSC)} if $\left(A_i+v_i\right)(X) \cap\left(A_j+v_j\right)(X)=\emptyset$ whenever $i \neq j$.

Let $\Sigma=\{1, \ldots, N\}^{\mathbb{N}}$ be the collection of all infinite words obtained from integers $\{1, \ldots, N\}$. If $\mathrm{i}=i_1 i_2 \cdots \in \Sigma$, then we define $\left.\mathrm{i}\right|_n=i_1 \cdots i_n$ for all $n \in \mathbb{N}$. The empty word $\left.\mathrm{i}\right|_0$ is denoted by $\varnothing$. Define $\Sigma_n=\left\{\left.\mathrm{i}\right|_n: \mathrm{i} \in \Sigma\right\}$ for all $n \in \mathbb{N}$ and $\Sigma_*=\bigcup_{n \in \mathbb{N}} \Sigma_n \cup\{\varnothing\}$. Thus $\Sigma_*$ is the collection of all finite words. The length of $i \in \Sigma_* \cup \Sigma$ is denoted by $|i|$. The longest common prefix of $i, j \in \Sigma_* \cup \Sigma$ is denoted by $i \wedge j$. The concatenation of two words $i \in \Sigma_*$ and $j \in \Sigma_* \cup \Sigma$ is denoted by $i j$. Let $\sigma$ be the left shift operator defined by $\sigma \mathrm{i}=i_2 i_3 \cdots$ for all $\mathrm{i}=i_1 i_2 \cdots \in \Sigma$. If $\mathrm{i} \in \Sigma_n$ for some $n$, then we set $[\mathrm{i}]=\left\{\mathrm{j} \in \Sigma:\left.\mathrm{j}\right|_n=\mathrm{i}\right\}$. The set $[\mathrm{i}]$ is called a cylinder set. The shift space $\Sigma$ is compact in the topology generated by the cylinder sets. Moreover, the cylinder sets are open and closed in this topology, and they generate the Borel $\sigma$-algebra. Also, we denote by $\mathcal M(\Sigma)$ the set of Borel probability measures on $\Sigma$; we denote by $\M(\Sigma, \sigma)\subset\mathcal M(\Sigma)$ the space of all invariant probability measures; and we denote by $\Ek_\sigma(\Sigma)$ the set of all $\sigma$-invariant ergodic probability measures.  Let $\mu \in \M(\Sigma, \sigma)$ and recall that the measure-theoretic entropy of $\mu$ and $\sigma$ is

$$
h_{\mu}(\sigma)=-\lim _{n \rightarrow \infty} \frac{1}{n} \sum_{i\in \Sigma_n} \mu([i]) \log \mu([\mathrm{i}]).
$$

A probability measure $\mu$ on $(\Sigma, \sigma)$ is \textit{Bernoulli} if there exists a probability vector $\left(p_1, \ldots, p_N\right)$ such that
$$
\mu([i])=p_{i_1} \cdots p_{i_n}
$$
for all $i=i_1 \cdots i_n \in \Sigma_n$ and $n \in \mathbb{N}$. Bernoulli measures are well known to be ergodic. We say that $\mu \in \M(\Sigma, \sigma)$ is an \textit{$n$-step Bernoulli} if it is a Bernoulli measure on $\left(\Sigma, \sigma^n\right)$. In this case, we write
\[
\tilde{\mu}=\frac{1}{n} \sum_{k=0}^{n-1} \mu \circ \sigma^{-k} \]
and note that $\tilde{\mu} \in \Ek_\sigma(\Sigma)$ and $ h_{\mu}\left( \sigma^n\right)=n h_{\tilde{\mu}}(\sigma)$.

Write $\A_{\mathrm{i}}=A_{i_1} \cdots A_{i_n}$ for all $\mathrm{i}=i_1 \cdots i_n \in \Sigma_n$ and $n \in \mathbb{N}$. The canonical projection $\pi: \Sigma \rightarrow X$ is defined by $\pi(\mathrm{i})=\sum_{n=1}^{\infty} \A_{\left.\mathrm{i}\right|_{n-1}} v_{i_n}$ for all $\mathrm{i}=$ $i_1 i_2 \cdots \in \Sigma$. When we need to emphasise how the canonical projection depends on the translation parameters, we denote it by $\pi_v$, where $v=(v_1,\ldots,v_N)$. It is easy to see that $\pi(\Sigma)=X$. If $\mu$ is a measure on $\Sigma$, then we denote the push-forward measure of $\mu$ under $\pi$ by $\pi_* \mu=\mu \circ \pi^{-1}$.

In this paper, we are interested in the Hausdorff dimension of \(\pi E(\vec{\alpha})\), where \(E(\vec{\alpha})\) is an \(\vec{\alpha}\)-level set defined as follows:
\[E(\vec{\alpha}) = \bigg\{ I \in \Sigma : \lim_{n \to \infty} \frac{1}{n} \log \sigma_i (\mathcal{A}_{I|_{n}}) = \alpha_i \text{ for } i = 1, 2, \ldots, d \bigg\},\]
where \(\sigma_1, \ldots, \sigma_d\) are singular values listed in decreasing order according to multiplicity. We also define the \textit{Lyapunov spectrum}
\[\vec{L}_{\A} = \bigg\{ \vec{\alpha} \in \mathbb{R}^d : \exists I \in \Sigma \text{ such that } \lim_{n \to \infty} \frac{1}{n} \log \sigma_i (\mathcal{A}_{I|_{n}}) = \alpha_i \text{ for } i = 1, 2, \ldots, d \bigg\}.\]
For simplicity, we denote $\vec{L}:=\vec{L}_{\A}$. By Park \cite[Theorem~D]{park2020quasi}, $\vec{L}_{\A}$ is closed and convex. Let $\mathring{\vec{L}}$ denote the relative interior of $\vec{L}$.

Several results exist for the conformal case, which is a particular case of Birkhoff averages (for instance, see \cite{BSS, FFW, Olsen}). In this paper, we focus on the non-conformal case. In the two-dimensional setting, B\'ar\'any, Jordan, K\"aenm\"aki and Rams \cite{BJKR} calculated the Hausdorff dimension \(\pi E(\vec{\alpha})\) for affine IFSs satisfying the strong open set condition and the coordinate-wise exponential separation under the assumption that the set of matrices is strongly irreducible and that the generated subgroup of the normalised matrices is not relatively compact. Imbierski, Kalle and Mohammadpour \cite{IKM} calculated the Hausdorff dimension \(\pi E(\vec{\alpha})\) for diagonally affine IFSs satisfying the strong open set condition under the assumption that the set of matrices is dominated. As far as we are aware, there is no result for the Hausdorff dimension \(\pi E(\vec{\alpha})\) in the \(d\)-dimensional non-conformal situation, when $d>2$.

In this paper, we consider tuples $(A_1,\ldots, A_k)\in \glr^k$ that are \textit{typical} (see Section~\ref{typical cocycles} for the definition). The typicality assumptions can be viewed as suitable generalisations of the notions of proximality and strong irreducibility in the context of random products of matrices (cf. \cite{Furstenberg1963, Viana-book}). Bonatti and Viana \cite{bonatti2004lyapunov} introduced the notion of typicality and showed that the set of typical tuples is open, dense, and has full Lebesgue measure. We then calculate the Hausdorff dimension of $\pi E(\vec{\alpha})$ for $\R^d$-affine iterated function systems satisfying the typicality assumption. One of our main tools is Theorem \ref{thm:everyergodic}, which can be seen as an extension of the result of Jordan, Pollicott and Simon \cite{JordanPollicottSimon2007} in the sense that the dimension result holds simultaneously for every ergodic measure over the same full-measure set of parameters.

The paper is outlined as follows. We state our main results in Section \ref{sec: results}. In Section~\ref{typical cocycles}, we introduce typical cocycles and recall the pressure and variational-principle results used throughout the paper. In Section~\ref{upper-bound}, we prove the upper bound for the Hausdorff spectrum. In Section~\ref{sec:dominated-case}, we treat the dominated case and obtain the corresponding lower bound. In Section~\ref{sec: simultaneous-transversality}, we show that, for almost every translation vector, the Hausdorff dimension of the projection of every ergodic measure equals the Lyapunov dimension under the simultaneous transversality assumption. Finally, Sections~\ref{proof-main-result1} and~\ref{proof-main-result2} are devoted to the proofs of Theorems~\ref{t:main-result1} and~\ref{t:main-result2}, respectively.

\section{Statement of the main result}\label{sec: results}


Let $A \in \glr.$ We define \textit{Falconer's singular value function} $\varphi^{s}(A)$ as follows.  Let $k \in\{0, \ldots, d-1\}$ and $k \leq s<k+1$. Then,
$$
\varphi^{s}(A)=\sigma_{1}(A) \cdots \sigma_{k}(A) \sigma_{k+1}(A)^{s-k},
$$

and if $s\geq d,$ then $\varphi^{s}(A)=(\det(A))^{\frac{s}{d}}.$ For $s:=(s_{1}, \cdots,  s_{d})\in \R^d$, we define the \textit{generalised singular value function} $\psi^{s_{1}, \ldots, s_{d}}(A): \mathbb{R}^{d \times d} \rightarrow[0, \infty)$ as
$$
\psi^{s_{1}, \ldots, s_{d}}(A):=\sigma_{1}(A)^{s_{1}} \cdots \sigma_{d}(A)^{s_{d}}=\left(\prod_{m=1}^{d-1}\left\|A^{\wedge m}\right\|^{s_{m}-s_{m+1}}\right)\left\|A^{\wedge d}\right\|^{s_{d}}.
$$

Let us define a map $s'\colon[0,\infty)\to\R^d$ by
\begin{equation}\label{definition_s'}
s'(s)=\begin{cases}
    (\underbrace{1, \ldots, 1}_{\lfloor s\rfloor\text{-times }}, s-\lfloor s\rfloor, \underbrace{0, \ldots, 0}_{\substack{d-(\lfloor s\rfloor+1)\\\text{-times }}}) & \text{ for }0\leq s< d\\
    (s/d,\ldots,s/d) & \text{ for }s\geq d.
\end{cases}
\end{equation}
Then the singular value function $\varphi^{s}(A)$ coincides with the generalized singular value function $\psi^{s'(s)}(A)$. 
Let us introduce the notation
\[
\Psi(A)=\left(\log \sigma_{1}(A), \ldots, \log \sigma_{d}(A)\right).
\]
Using this, we have the alternative representation
$$
\log\psi^q(A)=\langle q,\Psi(A)\rangle
$$
for every $q\in\R^d$.


        

Let $\mu \in \M(\Sigma, \sigma).$ We denote 
\[ \vec{\chi}(\mu, \A):=(\chi_{1}(\mu, \A), \ldots, \chi_{d}(\mu, \A)),\]
where $\chi_{i}(\mu, \A):=\lim_{n \to \infty} \frac{1}{n} \int \log \sigma_{i}(\A_{J|_{n}}) d\mu(J).$ For simplicity, we abbreviate $\vec{\chi}(\mu,\A)$ to $\vec{\chi}(\mu)$ and $\chi_{i}(\mu, \A)$ to $\chi_{i}(\mu)$ whenever there is no ambiguity.

 The \textit{Lyapunov dimension} of $\mu \in \M(\Sigma, \sigma)$  is defined to be $$ \dim_{\text{L}} \mu=\min _{k \in\{1, \ldots, d\}}\left\{k-1+\frac{h_\mu(\sigma)-\sum_{i=1}^{k-1} \chi_i(\mu)}{\chi_k(\mu)}\right\}.$$


The topological pressure of the generalised singular value potential  is defined by
\begin{equation}\label{eq:P*}
P^*\left(\log \psi^q (\A)\right):=\limsup _{n \rightarrow \infty} \frac{1}{n} \log Z_n(q), \quad \forall q \in \mathbb{R}^d,
\end{equation}
where
$$
Z_n(q):=\sum_{I \in \Sigma_n} \psi^q(\A_{I}).
$$

The function $Z_n(q)$ is often referred to as the partition function. In the case that $(A_1, \ldots, A_N)$ is typical (see Section~\ref{typical cocycles}), one can replace limsup by a limit, and we denote it by $P(\log \psi^q(\A)$) (cf. Section~\ref{typical cocycles}).

We denote by $h_{\text{top}}(\sigma, E(\vec{\alpha}))$ the topological entropy (in the sense of \cite{Bowen}) of the level set. We are now ready to formulate our main theorems.
The results determine the Hausdorff dimensions of the canonical projections
of $E(\vec{\alpha})$.

\begin{maintheorem}\label{t:main-result1}Let $\left(A_1 + v_1, \ldots, A_N + v_N\right)$ be an affine IFS on $\mathbb{R}^3$ satisfying the SOSC, where $\{A_1, \ldots, A_N\} \subset \mathrm{GL}(3,\mathbb{R})$. Assume that $\left(A_1, \ldots, A_N\right)$ is typical. Then,
\[
\begin{aligned}
\dim_{\mathrm{H}}\big(\pi E(\vec{\alpha})\big) 
&= \sup\left\{ s \geq 0 : \inf_{q \in \mathbb{R}^3} \Big( P\big(\log \psi^{s'(s)+q}(\A)\big) - \langle q, \vec{\alpha} \rangle \Big) \geq 0 \right\} \\
&= \min_{k \in \{1, 2, 3\}} \left\{ k - 1 + \frac{h_{\mathrm{top}}(E(\vec{\alpha})) - \sum_{i=1}^{k-1} \alpha_i}{\alpha_k} \right\} \\
&= \sup \left\{ \dim_{\mathrm{L}} \mu : \mu \in \M(\Sigma, \sigma) \text{ and } \vec{\chi}(\mu) = \vec{\alpha} \right\},
\end{aligned}
\]
for all $\vec{\alpha} \in \mathring{\vec{L}}$.
\end{maintheorem}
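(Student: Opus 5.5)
We prove the three equalities by showing that the three quantities coincide (call the common value $D(\vec\alpha)$), and then prove matching upper and lower bounds for $\dim_{\mathrm H}\pi E(\vec\alpha)$.

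\emph{Equality of the three formulas.} For typical tuples the pressure $P(\log\psi^q(\A))$ is a genuine limit. It satisfies the variational principle $P(\log\psi^q(\A))=\sup_{\mu}\{h_\mu(\sigma)+\langle q,\vec\chi(\mu)\rangle\}$, and it is differentiable enough that the conditional variational principle holds: for $\vec\alpha\in\mathring{\vec L}$,
\[
h_{\mathrm{top}}(E(\vec\alpha))=\sup\{h_\mu(\sigma):\vec\chi(\mu)=\vec\alpha\}=\inf_{q\in\R^3}\bigl(P(\log\psi^q(\A))-\langle q,\vec\alpha\rangle\bigr).
\]
We take these as the results recalled in Section~\ref{typical cocycles}.

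Replacing $q$ by $s'(s)+q$ shifts the infimum by $-\langle s'(s),\vec\alpha\rangle$. Hence the first expression is the largest $s$ with $h_{\mathrm{top}}(E(\vec\alpha))\ge \langle s'(s),\vec\alpha\rangle$. Since $s\mapsto\langle s'(s),\vec\alpha\rangle$ is continuous, piecewise linear and strictly decreasing (all $\alpha_i<0$), solving on each interval $[k-1,k)$ gives exactly
\[
\min_k\Bigl\{k-1+\frac{h_{\mathrm{top}}-\sum_{i<k}\alpha_i}{\alpha_k}\Bigr\}.
\]
The usual concavity argument shows that the minimum over $k$ picks out the right interval. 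For the third expression, $\dim_{\mathrm L}\mu$ depends on $\mu$ only through $h_\mu$ and $\vec\chi(\mu)$, and it is increasing in $h_\mu$. So the supremum over $\{\vec\chi(\mu)=\vec\alpha\}$ is obtained by inserting $\sup h_\mu=h_{\mathrm{top}}(E(\vec\alpha))$.

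\emph{Upper bound} (Section~\ref{upper-bound}). Fix $s>D(\vec\alpha)$. Then there is $q$ with $P(\log\psi^{s'(s)+q})-\langle q,\vec\alpha\rangle<0$. Cover $E(\vec\alpha)$ by the cylinders $[I]$, $|I|=n\ge n_0$, for which $\Psi(\A_I)\approx n\vec\alpha$. Each $\pi[I]$ lies in an ellipsoid with semi-axes $\sigma_i(\A_I)$, and the standard Falconer cover of such an ellipsoid has $s$-cost at most $\varphi^s(\A_I)$. On these cylinders
\[
\varphi^s(\A_I)=\psi^{s'(s)}(\A_I)\le e^{n\varepsilon}\,\psi^{s'(s)+q}(\A_I)\,e^{-n\langle q,\vec\alpha\rangle}.
\]
Summing over $n$ gives a convergent series, so $\mathcal H^s(\pi E(\vec\alpha))=0$. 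This step does not use SOSC.

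\emph{Lower bound.} This is the main obstacle. We approximate $D(\vec\alpha)$ by measures $\mu$ with $\vec\chi(\mu)=\vec\alpha$ and then need $\dim_{\mathrm H}\pi_*\mu=\dim_{\mathrm L}\mu$ for an ergodic $\mu$ supported on $E(\vec\alpha)$.

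\begin{enumerate}[(i)]
\item Using typicality, find a finite collection of long words whose generated subsystem is \emph{dominated} (it has invariant multicones for each index). In addition, the restricted spectrum of this subsystem should contain $\vec\alpha$ with entropy close to $h_{\mathrm{top}}(E(\vec\alpha))$. This uses that $\vec\alpha$ lies in the relative interior, together with the approximation of Lyapunov vectors by periodic/typical words.
\item On the dominated subsystem, which still satisfies SOSC, take an ergodic (for instance $n$-step Bernoulli or equilibrium) measure $\mu$ with $\vec\chi(\mu)=\vec\alpha$. By the dominated-case results of Section~\ref{sec:dominated-case}, $\dim_{\mathrm H}\pi_*\mu=\dim_{\mathrm L}\mu$. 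In $\R^3$ this rests on a Ledrappier--Young type formula for the projections onto the dominated flags and on the planar results, in the spirit of B\'ar\'any--Hochman--Rapaport, applied to the induced actions on $\R^3$ and on $\wedge^2\R^3$ (the duality in dimension $3$ is what makes this available). SOSC together with domination gives the necessary exponential separation/projection properties.
\end{enumerate}

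Since $\mu(E(\vec\alpha))=1$, we get $\dim_{\mathrm H}\pi E(\vec\alpha)\ge\dim_{\mathrm L}\mu$. Letting the subsystems exhaust the entropy gives the bound $D(\vec\alpha)$, using continuity of $\dim_{\mathrm L}$ in $(h,\vec\chi)$ at interior points.

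The delicate points are step (i), approximating with prescribed exponent $\vec\alpha$ exactly rather than only approximately, and the three-dimensional projection argument in step (ii). We would handle step (i) by a convex-combination or perturbation trick that uses interior points of the dominated subsystem's spectrum.
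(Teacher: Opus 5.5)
Your upper bound is the paper's Theorem~\ref{HD-one side}, and your lower-bound architecture (dominated subsystems extracted from typicality, then a dimension formula for measures on them) is also the paper's. You differ on the equality of the three expressions. You get it directly by Legendre duality from the conditional variational principle, whereas the paper passes through the subsystems and Theorem~\ref{continuity_potential}. Your route is more direct, but it needs the $\inf_q$ (Legendre) form for typical tuples as an input from \cite{Moh23}, not just the $\sup_\mu$ form \eqref{restricted}. The paper proves only the easy inequality of that form directly (Lemma~\ref{lem:VP bound}) and gets the other through the subsystems. You also need to fix a sign. The substitution gives $\inf_q(\cdots)=h_{\mathrm{top}}(E(\vec\alpha))+\langle s'(s),\vec\alpha\rangle$, so the condition is $h_{\mathrm{top}}(E(\vec\alpha))\ge-\langle s'(s),\vec\alpha\rangle$. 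As you wrote it, the condition holds for every $s$, since all $\alpha_i<0$.

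The genuine gaps are in the lower bound, at the two points you flag yourself. The first is hitting $\vec\alpha$ exactly. A convex combination of ergodic measures whose exponents differ from $\vec\alpha$ is not ergodic and gives zero mass to $E(\vec\alpha)$, so that trick is useless. You need an ergodic measure with $\vec\chi=\vec\alpha$ exactly and $\dim_{\mathrm L}\ge s$, and for the dimension theorem below it must be $n$-step Bernoulli. This is the content of Proposition~\ref{existence of Bernoulli measure}:
\begin{enumerate}[(a)]
\item Approximate the dominated potential $\Phi$ by a locally constant $\Phi_n$ with $\|S_n\Phi-\Phi_n\|_\infty\le n\varepsilon$.
\item The equilibrium states $\mu_q$ of $\langle s'(s)+q,\Phi_n\rangle-\langle q,n\vec\alpha\rangle$ are $n$-step Bernoulli.
\item The map $q\mapsto\int(\Phi_n-n\vec\alpha)\,d\mu_q$ is the gradient of a convex function whose low sublevel set $Q$ is bounded, because $\vec\alpha$ is interior.
\item A homotopy/degree argument (\cite[Lemma~4.2]{BJKR}) produces $q_1\in Q$ with $\int S_n\Phi\,d\mu_{q_1}=n\vec\alpha$.
\item The pressure bound on $Q$ then gives $\dim_{\mathrm L}\tilde\mu_{q_1}\ge s$.
\end{enumerate}
Near-optimality of the subsystems then comes from the convergence of normalised pressures in Theorem~\ref{continuity_potential}, not from approximation by periodic words.

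The second gap is the identity $\dim_{\mathrm H}\pi_*\mu=\dim_{\mathrm L}\mu$. The mechanism you state, that SOSC plus domination supply the needed separation and projection properties, is false. Diagonal (Bedford--McMullen type) sponges in $\R^3$ with strictly ordered contraction ratios are dominated and satisfy SOSC. Yet their Bernoulli measures have dimension given by a Ledrappier--Young formula with projection entropies, which is in general strictly smaller than the Lyapunov dimension. What makes projections behave is typicality (irreducibility and proximality type conditions). The input the paper uses is Rapaport's theorem (Theorem~\ref{Rapaport}, \cite{Rapaport2024SelfAffine}), which needs those hypotheses together with SOSC and applies to Bernoulli (self-affine) measures. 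So step (ii) must use $n$-step Bernoulli measures, not general equilibrium states. It must also use that the subsystem keeps the hypotheses of that theorem, not merely domination: the paper invokes Theorem~\ref{main result for dominated case}, which assumes dominated \emph{and} typical. Your heuristic via Ledrappier--Young formulas on $\R^3$ and $\wedge^2\R^3$ does not replace this theorem.
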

Theorem \ref{t:main-result1} gives the Hausdorff dimension of the projected level sets under the SOSC in dimension three, assuming that the set of matrices is typical, which is an open and dense (generic) condition; see \cite{bonatti2004lyapunov}. The following theorem extends the result to arbitrary dimensions and replaces the SOSC assumption with a suitable contraction assumption on the linear parts by showing that the same dimension formula holds for Lebesgue-almost every choice of the translation vectors.

\begin{maintheorem}\label{t:main-result2} Let $\{A_1, \ldots, A_N\} \subset \glr$ be such that $\max_{i\neq j}\|A_i\|+\|A_j\|< 1$, and  $\left(A_1, \ldots, A_N\right)$ is typical. Then, for Lebesgue-almost all $v=(v_i)_{i=1}^N$, we have for the affine IFS $\Theta_v=\left(A_1 + v_1, \ldots, A_N + v_N\right)$ in $\R^d$ that
\[
\begin{aligned}
\dim_{\mathrm{H}}\big(\pi_v E(\vec{\alpha})\big) 
&= \sup\left\{ s \geq 0 : \inf_{q \in \mathbb{R}^d} \Big( P\big(\log \psi^{s'(s)+q}(\mathcal{A})\big) - \langle q, \vec{\alpha} \rangle \Big) \geq 0 \right\} \\
&= \min_{k \in \{1, \ldots, d\}} \left\{ k - 1 + \frac{h_{\mathrm{top}}(E(\vec{\alpha})) - \sum_{i=1}^{k-1} \alpha_i}{\alpha_k} \right\} \\
&= \sup \left\{ \dim_{\mathrm{L}} \mu : \mu \in \M(\Sigma, \sigma) \text{ and } \vec{\chi}(\mu) = \vec{\alpha} \right\},
\end{aligned}
\]
for all $\vec{\alpha} \in \mathring{\vec{L}}$, where $\pi_v$ denotes the canonical projection of $\Theta_v$.
\end{maintheorem}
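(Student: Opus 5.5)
The proof of Theorem~\ref{t:main-result2} splits into the equalities among the three expressions, an upper bound for $\dim_{\mathrm H}\pi_vE(\vec\alpha)$, and a matching lower bound. The equalities are purely symbolic and do not involve translations. For typical tuples the pressure $q\mapsto P(\log\psi^q(\A))$ is a limit, convex and differentiable on the relevant region, and it satisfies a variational principle over invariant measures. I would use this to show that $\inf_q\big(P(\log\psi^{s'(s)+q})-\langle q,\vec\alpha\rangle\big)=\sup\{h_\mu+\langle s'(s),\vec\alpha\rangle:\vec\chi(\mu)=\vec\alpha\}$ for $\vec\alpha\in\mathring{\vec L}$. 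This is a Legendre duality using the conditional variational principle of Feng--Huang type, available for typical cocycles. I would also use that $h_{\mathrm{top}}(E(\vec\alpha))=\sup\{h_\mu:\vec\chi(\mu)=\vec\alpha\}$. Since $h_\mu+\langle s'(s),\vec\alpha\rangle\ge0$ is equivalent to $s\le k-1+(h_\mu-\sum_{i<k}\alpha_i)/\alpha_k$ for the appropriate $k$, taking suprema yields both the min-formula and the $\sup\dim_{\mathrm L}\mu$ formula. Upper semicontinuity of entropy on the closed set $\{\vec\chi(\mu)=\vec\alpha\}$ lets one pass the supremum inside.

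For the upper bound, which holds for every $v$, I would cover $\pi_vE(\vec\alpha)$ by the sets $E_n(\vec\alpha,\eps)$ of words whose first $n$ singular values are $e^{n(\alpha_i\pm\eps)}$. Each image $\pi_v[I]$ lies in an ellipse with semi-axes $\sigma_i(\A_I)$, so it can be covered by about $\sigma_1\cdots\sigma_{k}/\sigma_{k+1}^{k}$ balls of radius $\sigma_{k+1}$. The number of words is at most $e^{n(h_{\mathrm{top}}(E(\vec\alpha))+\eps)}$, by the standard counting bound from the entropy spectrum, or equivalently via the pressure bound $\#\le e^{n(P(\log\psi^q)-\langle q,\vec\alpha\rangle+O(\eps))}$. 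Summing gives $\mathcal H^s=0$ for $s$ above the min-formula.

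For the lower bound, fix $\vec\alpha\in\mathring{\vec L}$ and $\delta>0$. I would take $\mu$ with $\vec\chi(\mu)=\vec\alpha$ and $\dim_{\mathrm L}\mu$ within $\delta$ of the supremum. The goal is to replace $\mu$ by an ergodic measure $\nu$ supported on $E(\vec\alpha)$ with $\vec\chi(\nu)=\vec\alpha$ exactly and $\dim_{\mathrm L}\nu\ge\dim_{\mathrm L}\mu-\delta$. I would construct it by approximating with $n$-step Bernoulli measures on a finite dominated subsystem, using typicality to obtain the subsystem (Section~\ref{sec:dominated-case}), and use interiority to correct the exponents to exactly $\vec\alpha$ by a small convex perturbation. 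Then $\nu(E(\vec\alpha))=1$ by Oseledets, so $\dim_{\mathrm H}\pi_vE(\vec\alpha)\ge\dim_{\mathrm H}\pi_{v*}\nu$. Theorem~\ref{thm:everyergodic} gives, for Lebesgue-a.e.\ $v$ and simultaneously for every ergodic measure, $\dim_{\mathrm H}\pi_{v*}\nu=\min\{d,\dim_{\mathrm L}\nu\}$; the norm condition $\|A_i\|+\|A_j\|<1$ supplies the needed transversality. Because the full-measure set of $v$ is independent of $\nu$, it works for all $\vec\alpha$ at once.

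I expect the main obstacle to be this lower-bound construction: producing ergodic measures carried exactly by $E(\vec\alpha)$ whose entropy and exponents approximate the optimiser while keeping the exponent vector pinned at $\vec\alpha$. I would first try a concatenation or Moran-type measure built from Bernoulli approximants on the dominated subsystem. Failing that, I would build a non-invariant Moran measure on $E(\vec\alpha)$ and estimate its local dimension directly.
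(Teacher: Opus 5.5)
Your outline has the same structure as the paper's proof. The upper bound is Theorem~\ref{HD-one side}, valid for every $v$. The lower bound combines the dominated subsystems coming from typicality with Theorem~\ref{thm:everyergodic}; transversality comes from Lemma~\ref{lem:transversality}, so the exceptional set of $v$ does not depend on $\vec\alpha$.

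You depart from the paper in two places. First, you derive the identities among the three expressions directly by Legendre duality. After the substitution $q\mapsto q-s'(s)$, the first expression becomes $\sup\{s:\Lambda(\vec\alpha)+\langle s'(s),\vec\alpha\rangle\ge 0\}$ with $\Lambda(\vec\alpha)=\inf_q\{P(\log\psi^q(\A))-\langle q,\vec\alpha\rangle\}$. The paper instead reaches these identities through the subsystems and a limit in $n$. Your version is cleaner, provided you have the typical-cocycle results $\Lambda(\vec\alpha)=h_{\mathrm{top}}(E(\vec\alpha))=\sup\{h_\mu:\vec\chi(\mu)=\vec\alpha\}$ that you invoke. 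Second, for the lower bound you look for a single $\sigma$-ergodic measure carried by $E(\vec\alpha)$. The paper instead applies Theorem~\ref{thm: themB for the dominated case} to the subsystem IFSs $\Theta^{(n)}_v$ and lets $n\to\infty$ using Theorem~\ref{continuity_potential}.

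\textbf{The gap is in the lower bound.} The measure that drives it is only announced, and the one concrete mechanism you name for pinning the exponents does not work. A nontrivial convex combination of ergodic measures is not ergodic, so Theorem~\ref{thm:everyergodic} no longer applies to it. Exponents of Bernoulli measures are not affine in the weights, so interiority alone does not let you hit $\vec\alpha$ exactly. The Moran fallback is not invariant, so it would need its own energy estimate, and the exceptional set of $v$ would then a priori depend on $\vec\alpha$.

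Three ingredients are needed.
\begin{enumerate}
\item Exactness has to come from the degree (homotopy) argument of Proposition~\ref{existence of Bernoulli measure}, run on the dominated tuple $\B$. There $\Psi$ is almost additive and can be replaced by locally constant potentials.
\item You need a quantitative reason why $(\Sigma_n^{\D})^{\N}$ captures almost all of $h_{\mathrm{top}}(E(\vec\alpha))$ at exponent $\vec\alpha$. A given $\mu$ on $\Sigma$ cannot simply be transplanted into the subsystem, because exponents are not continuous in the measure for non-dominated cocycles. In the paper this is supplied by Theorem~\ref{continuity_potential}.
\item The block lengths $|J_1(I)IJ_2(I)|$ range over $[n,n+2K_0]$, so prescribing $\B$-exponents $n\vec\alpha$ does not place the measure on $E(\vec\alpha)$. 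You must impose $\vec\chi(\nu',\B)=\bar\ell(\nu')\,\vec\alpha$, where $\bar\ell(\nu')=\int|I_1|\,d\nu'$. One way is to run the degree argument for the almost additive potential $\Psi(\B_I)-|I|\vec\alpha$ at level $0$, after checking that $0$ is interior to its spectrum for large $n$.
\end{enumerate}
With these in place, $\nu'$-almost every point lies in $E(\vec\alpha)$, since block lengths are bounded. Theorem~\ref{thm:everyergodic} can then be applied to $\Theta^{(n)}_v$, as the paper does.

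Your insistence on $\nu(E(\vec\alpha))=1$ is the right requirement. It is exactly what the paper's step $\lim_n\dim_{\mathrm H}\pi_vE^{n,\D}(n\vec\alpha)\le\dim_{\mathrm H}\pi_vE(\vec\alpha)$ tacitly needs.

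A minor point: $\{\mu:\vec\chi(\mu)=\vec\alpha\}$ need not be closed for non-dominated cocycles. You do not need closedness, because at fixed exponents $\dim_{\mathrm L}$ is continuous and nondecreasing in the entropy.
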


\section{Typical matrices}\label{typical cocycles}

The eccentricity of a linear map $B \in \glr$ is defined by 
\[\operatorname{Ecc}(B)=\min _{1 \leq i<d} \frac{\sigma_{i}(B)}{\sigma_{i+1}(B)}.\]
We denote by $\Grass(k, d)$ the $k$-th Grassmannian, i.e., the set of all $k$-dimensional subspaces of $\R^d$.

\begin{defn}\label{typical1}
We say that $(A_1,\ldots,A_k)\in\glr^k$ is \textit{typical} if the semigroup generated by $\{A_1,\ldots,A_k\}$ satisfies the following properties:
\begin{itemize}
\item[(i)]$\{\A_I \}_{I \in \Sigma_{\ast}}$ contains elements with arbitrarily large
eccentricity;
\item[(ii)] for any $1\leq i \leq d-1,$ and any $F\in \Grass(i,d)$, and any finite family $G_{1},\ldots,G_{n} \in \Grass(d-i,d),$ there exists $K \in \Sigma_{*}$ such that $\A_{K}\left(F\right) \cap G_{j}=\{0\}$ for
every $1\leq j \leq n.$
\end{itemize}

\end{defn}
The above definition is borrowed from \cite{Moh23}, where it is based on \cite[Section 8]{Viana-book} and \cite[Subsection A.4.5]{AV07}. Avila and Viana \cite[Subsection A.4.5]{AV07} showed that this definition is equivalent to the general definitions of typical cocycles given in \cite[Definition 1.2]{AV07}. Avila and Viana \cite{AV07, AV07-acta}, as well as Bonatti and Viana \cite{bonatti2004lyapunov}, showed that the set of typical cocycles is open and dense. Their works are considered an extension of Furstenberg's work \cite{Furstenberg1963}, in which he proved the simplicity of the top Lyapunov exponent under the assumptions of proximality and strong irreducibility (see \cite{Viana-book, Viana2020} for more details).


\begin{thm}\label{vp-for the generalized sing}
Assume that $(A_1, \ldots, A_k)\in \glr^k$ is typical. Then in the definition \eqref{eq:P*} of $P^*(\log \psi^q(\mathcal{A}))$, the limsup can be replaced by a limit for any $q\in \R^{d}$, and the following variational principle holds:
\[ P(\log \psi^{q}(\A))=\sup\bigg\{ h_{\mu}(\sigma)+\lim_{n\to \infty} \frac{1}{n} \int \log \psi^{q}(\A_{x|_{n}}) d\mu(x): \mu \in \M(\Si, \sigma) \bigg\} \]
for any $q \in \R^d.$
\end{thm}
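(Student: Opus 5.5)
We plan to prove Theorem~\ref{vp-for the generalized sing} in two steps. First we show that the partition functions $Z_n(q)$ are almost multiplicative, so the limit exists. Then we derive the variational principle from the general variational principle for subadditive potentials.

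\textbf{Reduction to exterior powers.} We write $\log\psi^q(\A_I)=\sum_{m=1}^{d-1}(q_m-q_{m+1})\log\|\A_I^{\wedge m}\|+q_d\log|\det \A_I|$. The determinant term is additive, and each $\log\|\A_I^{\wedge m}\|$ is subadditive. Since the coefficients $q_m-q_{m+1}$ can have either sign, the potential $\log\psi^q$ is in general neither sub- nor superadditive. So the key step is to show that typicality makes every $\log\|\A^{\wedge m}\|$ \emph{quasi-multiplicative}: there exist a finite set $\mathcal{K}\subset\Sigma_*$ and $c>0$ such that for all $I,J\in\Sigma_*$ some $K\in\mathcal{K}$ satisfies
\[
\|\A_{IKJ}^{\wedge m}\|\geq c\,\|\A_I^{\wedge m}\|\,\|\A_J^{\wedge m}\| \quad\text{for all } m=1,\ldots,d-1 \text{ simultaneously.}
\]

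\textbf{Proving the simultaneous quasi-multiplicativity.} We argue by compactness, in the style of Park~\cite{park2020quasi}.
\begin{itemize}
\item For each $m$, the quantity $\|\A_{IKJ}^{\wedge m}\|/(\|\A_I^{\wedge m}\|\|\A_J^{\wedge m}\|)$ is bounded below by a continuous function of three things: the top $m$-dimensional singular direction of $\A_J$ (a point $F\in\Grass(m,d)$), the bottom $(d-m)$-dimensional singular subspace of $\A_I$ (a point $G\in\Grass(d-m,d)$), and $\A_K$. This function is positive precisely when $\A_K(F)\cap G=\{0\}$.
\item We need a single $K$ that works for all $m$ at once. Condition~(ii) only handles one index $i$ at a time. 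We therefore use condition~(i) (large eccentricity, which makes the images contract toward attracting flags) to concatenate words: first choose $K_1$ good for $m=1$, then extend by a word $K_2$ making the $m=2$ position generic while preserving, up to a controlled constant, the transversality already achieved for $m=1$. The preservation works because transversality is an open condition, and we iterate over $m$.
\item Compactness of $\prod_m \Grass(m,d)\times\Grass(d-m,d)$ then yields a finite set $\mathcal{K}$ of such words with a uniform constant $c$.
\end{itemize}
We expect this simultaneous version to be the main obstacle. Our first attempt will be the direct argument above, using the equivalence with the Avila--Viana formulation of typicality; if that fails, we will cite the quasi-multiplicativity results of~\cite{Moh23} or~\cite{park2020quasi}.

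\textbf{Existence of the limit.} With quasi-multiplicativity in hand, we split $q$ into the coefficients of positive and negative sign. For the positive part, subadditivity gives $\psi^{q}(\A_{IJ})\le C\,\psi^q(\A_I)\psi^q(\A_J)$ up to the negative-coefficient factors. For those factors we use $\|\A_{IJ}^{\wedge m}\|\geq \|\A_I^{\wedge m}\|\,\|\A_J^{\wedge m}\|\,\mathrm{Ecc}$-type lower bounds, combined with the quasi-multiplicative inequality. Together these give constants $C\geq1$ and a bounded gap $\ell=\max_{K\in\mathcal{K}}|K|$ such that
\[
C^{-1}Z_n(q)Z_m(q)\le \sum_{j\le \ell} Z_{n+m+j}(q) \quad\text{and}\quad Z_{n+m}(q)\le C\, Z_n(q)Z_m(q).
\]
By a Fekete-type argument with bounded gaps, $\lim_{n\to\infty}\frac1n\log Z_n(q)$ exists.

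\textbf{Variational principle.} For the inequality $\ge$, we take any invariant $\mu$ and reduce to the ergodic case by ergodic decomposition, using affinity of entropy and of the Lyapunov terms. For ergodic $\mu$, the Shannon--McMillan--Breiman theorem together with Kingman's theorem applied to each exterior power gives that most $n$-cylinders carry $\psi^q(\A_I)\approx e^{n\int\log\psi^q\,d\mu}$ and have measure about $e^{-nh_\mu}$, which bounds $Z_n$ from below. For the inequality $\le$, we build for each large $n$ a Bernoulli measure on $(\Sigma,\sigma^{n+\ell})$ from the weights of words $IK$ with $I\in\Sigma_n$, $K\in\mathcal{K}$. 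Quasi-multiplicativity makes $\psi^q$ almost multiplicative along its typical points, so its average $\tilde\mu$ satisfies
\[
h_{\tilde\mu}+\lim_{n\to\infty}\frac1n\int\log\psi^q\,d\tilde\mu\ \ge\ \frac1n\log Z_n(q)-O(1/n).
\]
Letting $n\to\infty$ gives the equality.
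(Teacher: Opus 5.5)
Your lower bound $P\ge h_\mu+\lim_n\frac1n\int\log\psi^q(\A_{x|_n})\,d\mu$ (SMB plus Kingman, then ergodic decomposition) is fine. So is the existence of the limit, once simultaneous quasi-multiplicativity (QM) is available. The paper itself does not prove this statement; it only cites \cite{Moh23}.

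\textbf{The gap: the reverse inequality.} The problem is $P\le\sup_\mu\{\cdots\}$, which is exactly where typicality matters when some $q_m<q_{m+1}$. A Bernoulli measure on blocks $IK$ chooses blocks independently, so whatever rule attaches $K$ to $I$ cannot see the next block. (The words $IK$ also do not have a common length.) QM, however, only controls $\|\A^{\wedge m}_{IKJ}\|$ for a connector that depends on both $I$ and $J$. To iterate it along $I_1K_1I_2K_2\cdots$, the connector $K_j$ must be adapted to the whole prefix $I_1K_1\cdots I_j$. At a non-adapted junction the loss in $\log\|\A^{\wedge m}\|$ is not controlled by anything in your argument. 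A priori it is of the order of the log-eccentricity of the blocks, i.e.\ of order $n$. So the claim that $\psi^q$ is almost multiplicative along typical points of your $\tilde\mu$ is unsupported.

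Nor can you use the Cao--Feng--Huang weak$^*$-limit argument verbatim. The map $\mu\mapsto\chi_1(\mu)+\cdots+\chi_m(\mu)$ is only upper semicontinuous, and it enters $\langle q,\vec\chi(\mu)\rangle$ with a negative coefficient when $q_m<q_{m+1}$.

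\textbf{Two ways to close it.}
\begin{enumerate}
\item The device that makes a Bernoulli construction legitimate is the one the paper imports in Theorem~\ref{dominated-one-step-cocycle}. The padding words $J_1(I),J_2(I)$ depend on $I$ alone and make every concatenation of blocks dominated. Hence $\langle q,\Psi(\mathcal B)\rangle$ is almost additive for every $q$ (Lemma~\ref{psi is almost additive}), so the variational principle holds on $(\Sigma_n^{\D})^{\N}$. It then transfers to $\Sigma$ as $n\to\infty$ via Theorem~\ref{continuity_potential} and Proposition~\ref{relation between entropies and LE}.
\item If you want to stay with QM, give up invariance at the construction stage.
\begin{itemize}
\item Let $\nu$ be the law of $I_1K_1I_2K_2\cdots$, with $I_j$ i.i.d.\ of weight $\psi^q(\A_I)/Z_n(q)$ and $K_j$ chosen by QM relative to the full prefix.
\item Let $\mu$ be a weak$^*$ limit of $\frac1p\sum_{j<p}\sigma^j_*\nu$.
\item Bound $h_\mu$ from below by Misiurewicz's argument.
\item For $q_m>q_{m+1}$, bound $\chi_1+\cdots+\chi_m$ from below by the Cao--Feng--Huang lemma together with iterated QM.
\item For $q_m<q_{m+1}$, bound it from above via $\chi_1(\mu)+\cdots+\chi_m(\mu)\le\frac1p\int\log\|\A^{\wedge m}_{x|_p}\|\,d\mu$ and pathwise submultiplicativity over the block decomposition.
\end{itemize}
Each step loses only $O(\ell/n)$ per symbol.
\end{enumerate}

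\textbf{Two smaller points.}
\begin{itemize}
\item The inequality $Z_{n+m}(q)\le C\,Z_n(q)Z_m(q)$ is not justified. Without a connector there is no ``$\mathrm{Ecc}$-type'' bound $\|\A^{\wedge m}_{IJ}\|\ge c\|\A^{\wedge m}_I\|\|\A^{\wedge m}_J\|$ uniform in $I,J$; that is a domination-type property. You also do not need it. Your lower inequality together with $Z_{p+1}(q)\le C'Z_p(q)$ gives $Z_{n+m}\ge c\,Z_nZ_m$. Superadditivity of $\log(cZ_n)$ plus the trivial bound $Z_n\le e^{Cn}$ then gives the limit.
\item Your own sketch of simultaneous QM does not work as stated. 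Appending $K_2$ changes the $m=1$ configuration by the fixed map $\A_{K_2}$, not by a small perturbation, so openness of transversality does not preserve it. A single word transverse for all $m$ at once comes from the pinching--twisting formulation of typicality, so relying on \cite{park2020quasi} here is the right call.
\end{itemize}
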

\begin{proof}
 It follows from \cite{Moh23}.
\end{proof}
We say that  $\A=(A_{1},\ldots,A_{k})\in \glr^{k}$ is \textit{dominated}  if and only if  there exist $C>0$  and $0<\tau<1$ such that for any $I \in \Sigma_*$, we have
$$
\frac{\sigma_{i+1}\left(\A_{I}\right)}{\sigma_i\left(\A_{I}\right)}<C \tau^{|I|}, \qquad \text{for all } i=1, \ldots, d-1.
$$
Note that the domination property can also be characterised in terms of the existence of invariant cone fields. That is, it follows from Bochi and Gourmelon \cite[Theorem B]{BG} that for every $k=1,\ldots,d-1$, there exists a closed proper subset $\mathcal{C}_k\subset\mathbb{P}(\R^{\wedge k})$  such that a $(\binom{d}{k}-1)$-dimensional plane in $\R^{\wedge k}$ is transverse to all elements of $\mathcal{C}_k$, and for every $i=1,\ldots,N$
$$
A_{i}^{\wedge k}\mathcal{C}_k\subseteq\mathring{\mathcal{C}_k},
$$
where $\mathring{\mathcal{C}_k}$ denotes the interior of $\mathcal{C}_k$. By Bochi and Morris \cite[Lemma~2.2]{Bochi-Morris}, there exists a constant $c>0$ such that for every $k=1,\ldots,d-1$ and $W\in\bigcup_{i=1}^NA_{i}^{\wedge k}\mathcal{C}_k$
\begin{equation}\label{eq:BM}
\|A_i^{\wedge k}|W\|\geq c\|A_i^{\wedge k}\|.
\end{equation}

For every $k=1,\ldots,d-1$, there exists a H\"older continuous map $W_k\colon\Sigma\to \mathbb{P}(\R^{\wedge k})$ such that for every $I=i_1i_2\cdots\in\Sigma$
$$
W_k(I)=A^{\wedge k}_{i_1}W_k(\sigma I),
$$
see \cite[Theorem~4.11]{CP}. Let us define the potential $\Phi_k(I):=\log\|A_{i_1}^{\wedge k}|W_k(\sigma I)\|$. It follows from \eqref{eq:BM} and the fact that $W_k(I)\in\bigcup_{i=1}^NA_{i}^{\wedge k}\mathcal{C}_k$ that there exists $C>0$ such that for every $I\in\Sigma$ and every $n\in\N$
and 
\begin{equation}\label{eq:approximatewithpotential0}
\left|\log\|\A_{I|_n}^{\wedge k}\|-S_n\Phi_k(I)\right|\leq C,
\end{equation}
where $S_n\Phi_k(I)=\sum_{\ell=0}^{n-1}\Phi_k(\sigma^\ell I)$ is the $n$-th Birkhoff sum of the potential $\Phi_k$. Denote
$$
\Phi(\A)=\left(\Phi_1(\A),\frac{\Phi_2(\A)}{\Phi_1(\A)},\ldots,\frac{\Phi_d(\A)}{\Phi_{d-1}(\A)}\right).
$$
Using \eqref{eq:approximatewithpotential0}, we get that there exists $C'>0$ such that for every $n\in\N$ and every $I\in\Sigma$
\begin{equation}\label{eq:approximatewithpotential}
\left|\Psi(\A_{I|_n})-S_n\Phi(I)\right|\leq C.
\end{equation}

The following theorem shows that typical systems contain sufficiently large dominated subsystems in some proper sense.

 \begin{thm}[{\cite[Corollary 4.6]{Moh22-entropy}}] \label{dominated-one-step-cocycle}
Assume that $(A_1, \ldots, A_k)\in \glr^k$ is typical. Then, there exists $K_0 \in \N$ such that for every $n\in \N$ and $I \in \Sigma_n$ there exist $J_2=J_2(I)$ and $J_1=J_1(I)$ with $|J_{i}|\leq K_0$ for $i=1,2$ such that the tuple \[\left(\mathcal{A}_{\mathrm{k}}\right)_{\mathrm{k} \in \Sigma_n^{\mathcal{D}}}, \quad \text{where } \Sigma_n^{\mathcal{D}}:=\left\{J_1(I) IJ_2(I): I \in \Sigma_n\right\},\]
is dominated.
\end{thm}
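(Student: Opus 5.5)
The statement is Theorem~\ref{dominated-one-step-cocycle}, quoted from \cite[Corollary 4.6]{Moh22-entropy}. The plan is to build it from the two typicality properties of Definition~\ref{typical1}. The idea is to sandwich every word $I$ between two short connecting words. These are chosen so that each product $\A_{J_1 I J_2}$ has a large gap between consecutive singular values and maps a fixed "expanding" cone into itself. By the cone characterisation of domination (Bochi--Gourmelon \cite[Theorem B]{BG}), this invariance is exactly what is needed.

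\textbf{Step 1: a reference element.} By property (i), pick a word $L\in\Sigma_*$ with $\operatorname{Ecc}(\A_L)$ as large as we like. For each $k=1,\ldots,d-1$, let $U_k$ be the top $k$-dimensional singular subspace of $\A_L$ (image side) and $S_k$ the bottom $(d-k)$-dimensional singular subspace (domain side). In $\mathbb P(\R^{\wedge k})$, $\A_L^{\wedge k}$ contracts everything outside a small neighbourhood of the hyperplane corresponding to $S_k$ into a small neighbourhood $\mathcal C_k$ of the point $U_k$. The contraction rate is controlled by $\sigma_{k+1}(\A_L)/\sigma_k(\A_L)$.

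\textbf{Step 2: choosing the connectors.} For a given $I$, we want $J_1$ and $J_2$ with two properties. First, $\A_{J_2}$ maps $U_k$ transversally to $S_k$, uniformly. Second, $\A_I\A_{J_2}$ sends $U_k$ into a subspace transverse to the relevant hyperplane, so that the next factor $L$ in $J_1$ re-contracts it.

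Concretely, take $J_2=L K_2$ and $J_1=K_1 L$. Here $K_1$ and $K_2$ are supplied by property (ii), applied to $F$ equal to the relevant image subspaces and to the finitely many $G_j$ equal to the bad subspaces $S_k$.

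The key difficulty is uniformity: the bad set depends on $I$, which ranges over infinitely many words. I would handle this by compactness of the Grassmannians. Property (ii) gives, for each configuration, a word achieving transversality; transversality is an open condition. So a finite subcover yields finitely many candidate words $K$, and their maximal length gives $K_0$. The transversality angles are then bounded below by some $\theta>0$ independent of $I$.

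\textbf{Step 3: verifying cone invariance and domination.} Once every $\A_{J_1IJ_2}^{\wedge k}$ maps $\mathcal C_k$ into its interior with a uniform margin, Bochi--Gourmelon gives domination of the tuple $(\A_{\mathrm k})_{\mathrm k\in\Sigma^{\mathcal D}_n}$.

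The margin bound needs an estimate of the form: angle-transversality $\theta$ plus $\operatorname{Ecc}(\A_L)\gg \theta^{-2}$ forces contraction of $\mathcal C_k$ into itself. This holds regardless of $\A_I$, because $\A_I$ is only required to move the cone image off the bad hyperplane, which Step 2 ensured.

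The main obstacle is precisely Step 2's uniformity. Since $\A_I$ is arbitrary, its action on $\mathcal C_k$ may be very degenerate. One must therefore choose $K_1$ depending on the image $\A_I\A_{J_2}(U_k)$ across all $k$ simultaneously, while keeping the length bounded by $K_0$ independently of $n$. I would first try the finite-subcover argument on the compact product of Grassmannians.
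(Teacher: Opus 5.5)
The paper gives no argument of its own here; it imports the statement from \cite[Corollary 4.6]{Moh22-entropy}. Your general strategy is the right kind of argument: a very eccentric word $L$, bounded connectors from twisting plus compactness, a strictly invariant cone, and then Bochi--Gourmelon. But the construction you actually specify does not work.

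\textbf{The connectors are in the wrong place.} With $J_1=K_1L$ and $J_2=LK_2$ the block is $\A_{J_1IJ_2}=\A_{K_1}\A_L\A_I\A_L\A_{K_2}$. So $\A_I$ sits directly between the two copies of $\A_L$, and no connector touches the middle product $\A_L\A_I\A_L$. This is true under either composition convention.
\begin{itemize}
\item After the right-hand $\A_L$, the cone has been collapsed near $U_k$, whatever $K_2$ was.
\item The left-hand $\A_L$ then contracts only if $\A_I$ maps that neighbourhood of $U_k$ uniformly away from the $k$-planes meeting $S_k$. That is a property of $I$ alone, and it fails for many $I$.
\item $K_1$ acts after this $\A_L$, which is too late.
\end{itemize}
For instance, in $d=2$ take $\A_L=\mathrm{diag}(\lambda,\mu)$ and $\A_I=cR_{\pi/2}$, a scalar multiple of the rotation by $\pi/2$. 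Then $\A_L\A_I\A_L=c\lambda\mu R_{\pi/2}$ is conformal. The block's eccentricity is therefore bounded independently of $L$, and the contraction mechanism of your Step 3 gives nothing. Also, the block outputs cluster near $\A_{K_1(I)}U_k$, which varies with $I$, so a single cone around $U_k$ is not the right invariant set. Your sentence ``this holds regardless of $\A_I$'' is therefore unjustified.

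\textbf{What is missing.} You need a connector between $\A_I$ and each copy of $L$: take $J_1=LK_1$ and $J_2=K_2L$, so the block is $\A_L\A_{K_1}\A_I\A_{K_2}\A_L$.
\begin{itemize}
\item Choose $K_2$ so that $\A_{K_2}U_k$ is $\theta$-transverse to the \emph{repelling subspace of $\A_I$}, the span of its bottom $d-k$ right singular vectors. Your plan never imposes this condition, and it is what actually neutralises the degeneracy of $\A_I$. On subspaces $\theta$-transverse to that subspace, $\A_I$ acts on $\Grass(k,d)$ with a Lipschitz constant depending only on $\theta$, however eccentric or conformal $\A_I$ is.
\item Choose $K_1$ so that $\A_{K_1}\A_I\A_{K_2}U_k$ is $\theta$-transverse to $S_k$. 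The left-hand $\A_L$ then pulls everything back into the cone.
\item Choose $L$ itself with $U_k$ uniformly transverse to $S_k$, since consecutive blocks produce $\A_L\A_L$.
\end{itemize}

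\textbf{Two further points need care.}
\begin{itemize}
\item The angle $\theta$ must come from compactness over all pairs $(F,G)\in\Grass(k,d)\times\Grass(d-k,d)$, not over configurations attached to $S_k$. Otherwise $\theta$ depends on $L$, and the requirement that $\operatorname{Ecc}(\A_L)$ be large relative to $\theta$ becomes circular.
\item A single pair $(K_1,K_2)$ has to work for all $k=1,\dots,d-1$ at once, because domination in the paper's sense is required at every index simultaneously. Property (ii) does not give this. It is formulated one index $i$ at a time, with all $G_j$ in the same $\Grass(d-i,d)$, so the subspaces $S_k$ of different dimensions cannot be fed into it as one family. Compactness only upgrades pointwise existence to uniformity. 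The pointwise simultaneous (flag) transversality has to come from the flag form of twisting in the Bonatti--Viana/Avila--Viana definition of typicality (cf.\ \cite{bonatti2004lyapunov,AV07}), or from an equivalent argument.
\end{itemize}
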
 
Note that for each $I \in \Sigma_n^{\mathcal{D}}$, $|I| \in [n, n+2K_0].$

We define a pressure on the dominated subsystem $\Sigma_n^{\mathcal{D}}$ by setting \color{black}
\[P_{n, \mathcal{D}}(\log\varphi):=\lim _{m \rightarrow \infty} \frac{1}{m} \log \sum_{I_{1}, \ldots, I_{m} \in \Sigma_n^{\mathcal{D}}}\varphi(I_1 \ldots I_m),\]
\color{black}whenever $\varphi: \Sigma_{\ast} \rightarrow \mathbb{R}_{\geq 0}$ is sub-multiplicative, i.e.,
$$
\varphi(\mathrm{I}) \varphi(\mathrm{J}) \geq \varphi(\mathrm{IJ}).
$$
for all $I, J \in \Sigma_{\ast}$ with $IJ \in \Sigma_{\ast}.$

If $(A_1, \ldots, A_k)\in \glr^{k}$ is dominated, then $\{\log \sigma_{i}(\A_{I})\}$ is almost additive for all $I \in \Sigma_*$ and $ i=1, \ldots, d$ by \cite[Proposition 5.8]{Moh22-Lyapunov} (see also \cite[Remark 4.2]{Moh22-entropy}). That is, there exists a constant $c>0$ such that for any $I,J\in\Sigma_*$ and $i=1,\ldots,d$
$$
\log\sigma_{i}(\A_{I})+\log\sigma_{i}(\A_{J})-c\leq\log\sigma_{i}(\A_{IJ})\leq\log\sigma_{i}(\A_{I})+\log\sigma_{i}(\A_{J})+c.
$$

 \begin{lem}\label{psi is almost additive}
 Assume that $(A_1, \ldots, A_k) \in \glr^k$ is dominated. Then, the potential $\{\langle q, \Psi(\A_{I}) \rangle\}$ is almost additive  for any  $I\in \Sigma_*$ and $q \in \R^d.$ 
\end{lem}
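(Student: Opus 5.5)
The plan is to reduce the claim to the almost additivity of each individual singular value, which was recorded just above the lemma. Since
\[
\langle q, \Psi(\A_I)\rangle=\sum_{i=1}^d q_i \log\sigma_i(\A_I),
\]
the potential is a finite linear combination of the sequences $\{\log\sigma_i(\A_I)\}_{I\in\Sigma_*}$, $i=1,\ldots,d$. Almost additivity (two-sided, with some constant) is preserved under linear combinations with real coefficients of either sign, provided the constant is enlarged to $c\sum_i|q_i|$.

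Concretely, fix $q\in\R^d$ and $I,J\in\Sigma_*$ with $IJ\in\Sigma_*$. Let $c>0$ be the constant from the almost additivity of $\log\sigma_i$ for dominated tuples (from \cite[Proposition 5.8]{Moh22-Lyapunov}), so that for each $i$
\[
\bigl|\log\sigma_i(\A_{IJ})-\log\sigma_i(\A_I)-\log\sigma_i(\A_J)\bigr|\le c.
\]
Multiplying by $q_i$ gives an error of at most $|q_i|c$, regardless of the sign of $q_i$. Summing over $i$ and using the triangle inequality yields
\[
\bigl|\langle q,\Psi(\A_{IJ})\rangle-\langle q,\Psi(\A_I)\rangle-\langle q,\Psi(\A_J)\rangle\bigr|\le c\,\|q\|_1 .
\]
This is precisely almost additivity with constant $c\|q\|_1$, which depends on $q$ but not on $I$ or $J$.

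There is no real obstacle here: the only point to check is that the cited almost additivity is genuinely two-sided for every $i$, including $i=d$ and intermediate indices. This holds because domination gives uniform gaps $\sigma_{i+1}/\sigma_i<C\tau^{|I|}$ for all $i$, and hence almost additivity of $\log\|\A_I^{\wedge k}\|$ for every $k$. Alternatively, one can verify it directly from \eqref{eq:approximatewithpotential}, as follows. Each coordinate of $\Psi(\A_{I|_n})$ is within $C$ of the Birkhoff sum $S_n\Phi(I)$, and Birkhoff sums are exactly additive, $S_{n+m}\Phi(K)=S_n\Phi(K)+S_m\Phi(\sigma^nK)$. Applying this to any $K\in[IJ]$ gives additivity up to $3C$ in each coordinate, and the same linear-combination argument finishes the proof.
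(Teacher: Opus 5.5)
Your proof is correct and follows the paper's route: the paper simply cites \cite[Proposition 5.8]{Moh22-Lyapunov} for the two-sided almost additivity of each $\log\sigma_i(\A_I)$ under domination, and your step of taking the linear combination with constant $c\|q\|_1$ is exactly the detail the paper leaves implicit. Your alternative check via the Birkhoff-sum approximation \eqref{eq:approximatewithpotential} is also valid: it gives an error of $3C$ per coordinate, with $\Phi$ read as successive differences of the $\Phi_k$.
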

\begin{proof}
It follows from \cite[Proposition 5.8]{Moh22-Lyapunov}.
\end{proof}

Assume that $(A_1, \ldots, A_k)\in \glr^k$ is typical. By Theorem \ref{dominated-one-step-cocycle}, $\mathcal{B}:=\left(\mathcal{A}_{\mathrm{k}}\right)_{\mathrm{k} \in \Sigma_n^{\mathcal{D}}}$ is dominated.  We also consider the full shift $((\Sigma_n^{\mathcal{D}})^{\N},f)$. It is easy to see that $(\Sigma_n^{\mathcal{D}})^{\N} \subset \Si$.

\begin{thm}[{\cite[Theorem 4.8]{Moh22-entropy}}]\label{continuity_potential}Assume that  $(A_1, \ldots, A_k) \in \glr^k$ is typical. Then, 
\[\lim _{n \rightarrow \infty} \frac{1}{n} P_{n, \mathcal{D}}(\langle q, \Psi(\mathcal{B}) \rangle )=P(\log \psi^{q}(\mathcal{A})),\]
for each $q\in\R^d$ (uniformly on compact subsets).
\end{thm}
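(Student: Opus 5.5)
The statement to prove is Theorem~\ref{continuity_potential}: $\frac1n P_{n,\mathcal D}(\langle q,\Psi(\mathcal B)\rangle)\to P(\log\psi^q(\A))$, uniformly for $q$ in compact sets. The plan is to trap $\frac1n P_{n,\mathcal D}$ between $P(\log\psi^q(\A))$ up to errors $O(1/n)$ and $O(\log n/n)$. Both comparisons rest on a single preliminary bound. Since every $J\in\Sigma_*$ with $|J|\le K_0$ comes from a finite set of invertible matrices, there is a constant $M$ with $\|\langle q,\Psi(\A_{J_1}\A_I\A_{J_2})\rangle-\langle q,\Psi(\A_I)\rangle\|\le M|q|$. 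This holds because each $\log\sigma_i$ changes by at most $\log\|B\|+\log\|B^{-1}\|$ under multiplication by $B$, and the ratio $\sigma_i(\A_{J_1}\A_I\A_{J_2})/\sigma_i(\A_I)$ is controlled this way.

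\emph{Lower bound.} On the dominated system $\mathcal B$, Lemma~\ref{psi is almost additive} gives almost additivity of $\langle q,\Psi\rangle$, with a constant $c_n$ that may depend on $n$. Almost additivity implies $P_{n,\mathcal D}\ge \log\sum_{K\in\Sigma_n^{\mathcal D}}\psi^q(\A_K)-c_n$, and also $\ge\frac1m\big(\log\sum\psi^q(\A_{K_1\cdots K_m})\big)$ from the superadditive side. The $n$-dependence of $c_n$ is a problem for this route. To avoid it, I would use the variational principle for almost additive potentials on the full shift $((\Sigma_n^{\mathcal D})^{\N},f)$. Take the Bernoulli measure with weights proportional to $\psi^q(\A_{J_1IJ_2})$; it has entropy plus Lyapunov integral $\ge\log\sum_{I\in\Sigma_n}\psi^q(\A_{J_1IJ_2})-c_n$. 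The map $I\mapsto J_1(I)IJ_2(I)$ is injective after modding out by the bounded pre/suffix, at most $N^{2K_0}$-to-one. Hence, by the preliminary bound, $\frac1nP_{n,\mathcal D}\ge \frac1n\log Z_n(q)-\frac{C(|q|+1)+c_n}{n}$. Since $\frac1n\log Z_n(q)\to P$ by Theorem~\ref{vp-for the generalized sing}, this side is fine provided $c_n=o(n)$. To get that, I would instead use submultiplicativity: $\sigma_1\cdots\sigma_k$ is submultiplicative for general matrices, and domination gives the reverse inequality with a constant depending only on the cone constants of Bochi–Gourmelon for $\mathcal B$. I expect this uniformity in $n$ (that $c_n$ stays bounded or $o(n)$) to be the main obstacle. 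If it fails, I would fall back on the gap structure built in \cite{Moh22-entropy}, where the $J_i$ are chosen from a fixed finite family making the cones uniform.

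\emph{Upper bound.} The word $K_1\cdots K_m$ is a word of length between $mn$ and $m(n+2K_0)$ in $\Sigma_*$. For $q$ in a compact set, $\psi^q$ over words of length $L$ sums to at most $e^{L(P+\epsilon)}$ for $L$ large. The number of length profiles is at most $(2K_0+1)^m$, and each word arises from at most $N^{2K_0m}$ choices. This gives $\frac1m P_{n,\mathcal D}$-sums bounded by $(2K_0+1)N^{2K_0}e^{(n+2K_0)(P+\epsilon)}$ when $P\ge0$, and with $n$ in place of $n+2K_0$ when $P<0$. Hence $\frac1nP_{n,\mathcal D}\le P+\epsilon+O(1/n)$.

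\emph{Uniformity.} All constants depend on $q$ only through $|q|$. Both sides are convex in $q$, and pointwise convergence of convex functions is uniform on compacts. This gives uniformity on compact subsets.
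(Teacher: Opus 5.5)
Your upper bound is sound, and so is the bounded-distortion comparison of $\sum_{K\in\Sigma_n^{\mathcal D}}\psi^q(\A_K)$ with $Z_n(q)$. The upper bound works because the map $(I_1,\ldots,I_m)\mapsto K_1\cdots K_m$ is at most $C(K_0)^m$-to-one, total lengths lie in $[mn,m(n+2K_0)]$, and $\frac1L\log Z_L(q)\to P(\log\psi^q(\A))$ locally uniformly by convexity. The paper itself gives no argument for this statement; it imports it from \cite{Moh22-entropy}.

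The gap is in the lower bound, at exactly the point you flag and then leave open. Everything reduces to $P_{n,\mathcal D}\ge\log\sum_{K\in\Sigma_n^{\mathcal D}}\psi^q(\A_K)-c_n$, and you need $c_n=o(n)$, where $c_n$ is the almost-additivity constant of $\mathcal B=\mathcal B_n$. The variational-principle detour does not remove $c_n$: it reappears in the Lyapunov integral of your Bernoulli measure. Your ``superadditive'' bound $P_{n,\mathcal D}\ge\frac1m\log\sum\psi^q(\A_{K_1\cdots K_m})$ also silently drops a $-c_n/m$, since $\psi^q$ is neither sub- nor super-multiplicative for general $q$. Appealing to ``the cone constants of Bochi--Gourmelon for $\mathcal B$'' is precisely where $n$ enters. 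Theorem~\ref{dominated-one-step-cocycle} only says that each $\mathcal B_n$ is dominated, so $C,\tau$, the cones $\mathcal C_k$ and the constant in \eqref{eq:BM} may all depend on $n$. This is not a technicality. For words of length about $n$, $\log\sigma_i(\A_K\A_{K'})$ can fall below $\log\sigma_i(\A_K)+\log\sigma_i(\A_{K'})$ by an amount of order $n$, for instance when $\A_{K'}$ sends its most expanded direction close to the most contracted direction of $\A_K$. Domination with $n$-dependent constants does not exclude this, and then your inequality $\frac1nP_{n,\mathcal D}\ge\frac1n\log Z_n(q)-\frac{c_n}{n}-O(\frac1n)$ says nothing.

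The missing ingredient is a quasi-multiplicativity of the subsystems that is uniform in $n$: a $\kappa>0$, independent of $n$ and $m$, with
\[
\kappa^{m}\prod_{j=1}^m\|\A_{K_j}^{\wedge k}\|\le\|\A_{K_1\cdots K_m}^{\wedge k}\|\le\prod_{j=1}^m\|\A_{K_j}^{\wedge k}\|\quad\text{for all }K_j\in\Sigma_n^{\mathcal D},\ k=1,\ldots,d-1.
\]
Since $\psi^q(A)=\prod_{k<d}\|A^{\wedge k}\|^{q_k-q_{k+1}}|\det A|^{q_d}$, this gives $\psi^q(\A_{K_1\cdots K_m})\ge\kappa^{m\sum_{k<d}\max\{q_k-q_{k+1},0\}}\prod_j\psi^q(\A_{K_j})$. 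Hence $P_{n,\mathcal D}\ge\log\sum_{K\in\Sigma_n^{\mathcal D}}\psi^q(\A_K)-O_q(1)$, with a constant independent of $n$ and locally uniform in $q$, and your argument then closes.

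Such a $\kappa$ has to come from how $J_1(I),J_2(I)$ are produced from typicality. Pinching and twisting are used so that, for each $k$, every $\A^{\wedge k}_{J_1(I)IJ_2(I)}$ maps one fixed cone in $\mathbb P(\wedge^k\R^d)$ into itself. On that cone it also satisfies $\|\A^{\wedge k}_{J_1IJ_2}w\|\ge\kappa\|\A^{\wedge k}_{J_1IJ_2}\|\|w\|$, with $\kappa$ independent of $I$ and $n$ by a compactness argument on the Grassmannians (the mechanism behind \cite{park2020quasi}). This does not follow from the statement of Theorem~\ref{dominated-one-step-cocycle}, and your ``fallback'' only gestures at it. Since this is the one non-formal step of the theorem, the lower bound is not yet proved.
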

\begin{prop}\label{relation between entropies and LE}
 Suppose that $(A_1, \ldots, A_k)\in \glr^k$ is typical. Then,  there exists $K_0 \ge 1$ so that, for each $n>2K_0$ there is a full shift $((\Sigma_{n}^{\D})^{\N}, f)$ and a dominated $\mathcal{B}:=\left(\mathcal{A}_{\mathrm{k}}\right)_{\mathrm{k} \in \Sigma_n^{\mathcal{D}}}$ (depending on $n$)  satisfying the following:

For any $\mu' \in \M((\Sigma_{n}^{\D})^{\N}, f)$, there is $\mu \in \M(\Si, T)$ such that
\begin{equation}\label{entropies-relation}
 h_{\mu'}(f) \leq (n+2K_0)h_{\mu}(T)+\frac {n+2K_0}n \log (2K_0+1),
\end{equation}
and 
\color{black}
\begin{equation}\label{LE-relations}
\lim_{m \to \infty} \frac{1}{m} \int \langle q, \Psi(\B_{x|_m}) \rangle d\mu'(x) \leq (n+2K_{0})\lim_{m\to \infty} \frac{1}{m} \int \log \psi^{q}(\A_{x|_m}) d\mu(x).
\end{equation}
\color{black}
\end{prop}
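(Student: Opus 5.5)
The plan is to lift $\mu'$ from the block shift to the original shift by the usual suspension-and-averaging construction. By Theorem~\ref{dominated-one-step-cocycle} we fix $K_0$, and for $n>2K_0$ we take $\Sigma_n^{\D}$ and $\mathcal B$ as there; by Lemma~\ref{psi is almost additive}, $\mathcal B$ is dominated. Every symbol $\mathrm k=J_1(I)IJ_2(I)\in\Sigma_n^{\D}$ is a finite word in $\Sigma_*$ of length $\ell(\mathrm k)\in[n,n+2K_0]$. Concatenation therefore gives a continuous injection $\iota\colon(\Sigma_n^{\D})^{\N}\to\Sigma$; it is injective because $I$ can be recovered from the block. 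Under $\iota$, $f$ corresponds to $\sigma^{\ell(x_1)}$, which has variable return time.

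To get an honest $\sigma$-invariant measure, I would first reduce to constant block length. Blocks have at most $2K_0+1$ possible lengths. If $\mu'$ is a general measure, I would use the induced/tower construction. Define on $(\Sigma_n^{\D})^{\N}\times\N$ the tower $\{(x,j):0\le j<\ell(x_1)\}$, give it the measure $\hat\mu=\mu'\otimes\text{counting}$ normalised by $\int\ell\,d\mu'\in[n,n+2K_0]$, and set $\mu:=(\,(x,j)\mapsto\sigma^j\iota(x))_*\hat\mu$. This $\mu$ is $\sigma$-invariant. Writing $T=\sigma$, Abramov's formula gives
\[h_{\hat\mu}=\frac{h_{\mu'}(f)}{\int\ell\,d\mu'}.\]
The factor map from the tower to $\Sigma$ loses at most the information of the block boundaries, that is, the positions and lengths of blocks. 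The entropy lost per symbol is bounded by the entropy of choosing the length among $2K_0+1$ values once per block, which is at most $\frac1n\log(2K_0+1)$ per unit time. Hence
\[h_{\mu'}(f)\le (n+2K_0)\Bigl(h_\mu(T)+\tfrac1n\log(2K_0+1)\Bigr),\]
which is \eqref{entropies-relation}. I would make this rigorous by counting cylinders. For a $\mu$-typical point, the $N$-cylinder in $\Sigma$ together with the boundary data (at most $(2K_0+1)^{N/n}$ choices) determines about $N/(n+2K_0)$ blocks of $x$, and Shannon–McMillan–Breiman on both sides gives the claim.

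For \eqref{LE-relations}, I would use subadditivity in the right direction. The singular value potential $\log\psi^q$ is not subadditive for general $q$, but along $\iota(x)$ the products of the $\mathcal B$-letters are exactly products $\A_{\iota(x)|_{L_m}}$ with $L_m=\sum_{i\le m}\ell(x_i)$, so $\langle q,\Psi(\B_{x|_m})\rangle=\log\psi^q(\A_{\iota(x)|_{L_m}})$. The Lyapunov exponents of $\mu$ are obtained from those of $\mu'$ via Kac: $\vec\chi(\mu)=\vec\chi(\mu',\mathcal B)/\int\ell\,d\mu'$. This uses the subadditive ergodic theorem for each $\log\|\cdot^{\wedge k}\|$ and the fact that $\psi^q$ is a linear combination of these. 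So the left side of \eqref{LE-relations} equals $\int\ell\,d\mu'\cdot\langle q,\vec\chi(\mu)\rangle$. The factor $\int\ell\,d\mu'$ lies in $[n,n+2K_0]$, which gives the bound with $(n+2K_0)$ when the right side is nonnegative.

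The main obstacle is the sign issue in \eqref{LE-relations}. When $\langle q,\vec\chi(\mu)\rangle<0$, the honest factor is $\int\ell\,d\mu'\le n+2K_0$, which goes the wrong way. I would handle this by padding every block to the common length $n+2K_0$, which avoids variable lengths altogether. Concretely, I would replace $\Sigma_n^{\D}$ by the words $J_1IJ_2$ extended by fixed filler words so that all blocks have length exactly $n+2K_0$, accepting bounded distortion from almost additivity. Then $\mu=\frac1{n+2K_0}\sum_k\sigma^k_*\iota_*\mu'$ and equality holds up to constants that vanish in the limit. If padding breaks domination, I would instead restrict to $\mu'$ for which $\ell$ is constant and note that this suffices for the applications.
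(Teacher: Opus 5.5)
The paper gives no argument for this proposition beyond a reference to \cite{Moh23}, so your proposal has to be judged on its own. Your tower construction is the right one, and it gives most of the statement. Take $\hat\mu$ to be the normalised measure on $\{(x,j):0\le j<\ell(x_1)\}$ and $\mu$ its image under $(x,j)\mapsto\sigma^j\iota(x)$. Abramov's formula, together with the fact that a point of $\Sigma$ plus its sequence of block lengths determines $x$, gives $h_{\mu'}(f)\le\big(\int\ell\,d\mu'\big)\big(h_\mu(\sigma)+\frac1n\log(2K_0+1)\big)$, and this implies \eqref{entropies-relation}. Via the ergodic decomposition (the weights $\int\ell\,d\mu'_\omega$ cancel) you also get the exact identity $\vec\chi(\mu',\B)=\big(\int\ell\,d\mu'\big)\vec\chi(\mu,\A)$. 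Two small corrections. First, $\iota$ need not be injective, since $\Sigma_n^{\D}$ need not be a prefix code and $I\mapsto J_1(I)IJ_2(I)$ need not be injective. You never use injectivity, though: the parsing ambiguity is exactly what the $\log(2K_0+1)$ term pays for. Second, domination of $\B$ comes from Theorem~\ref{dominated-one-step-cocycle}, not from Lemma~\ref{psi is almost additive}.

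The genuine gap is \eqref{LE-relations} when $c:=\langle q,\vec\chi(\mu,\A)\rangle<0$. Your identity makes the left-hand side equal to $\big(\int\ell\,d\mu'\big)c\ge(n+2K_0)c$, which is the wrong direction, and neither of your repairs works.
\begin{itemize}
\item \emph{Padding.} Appending filler words to $J_1(I)IJ_2(I)$ is not known to preserve domination when $\A$ is merely typical. The connecting words are chosen so that all products respect a common cone family, and an arbitrary $\A_W$ can push these cones off. Padding also replaces the alphabet $\Sigma_n^{\D}$ to which Theorem~\ref{continuity_potential} refers.
\item \emph{Filler as distortion.} If you keep the original $\B$ and treat the filler as a distortion, the error is of order $K_0|q|$ per block. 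That survives in $\lim_m\frac1m$, and \eqref{LE-relations} has no slack to absorb it.
\item \emph{Restriction.} Restricting to $\mu'$ with constant block length says nothing about general $\mu'$.
\end{itemize}

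In fact no choice of $\mu$ can close this gap, because \eqref{LE-relations} is false in this regime as soon as $\Sigma_n^{\D}$ contains a word $\mathrm k$ with $|\mathrm k|<n+2K_0$. Let $\mu'$ be the Dirac mass at the $f$-fixed point $\mathrm{k}\mathrm{k}\mathrm{k}\cdots$ and take $q=(1,0,\ldots,0)$. The left-hand side is $\log\rho(\A_{\mathrm k})$, where $\rho$ is the spectral radius. For every $\mu\in\M(\Sigma,\sigma)$ the right-hand side is at most $(n+2K_0)\beta$, where $\beta=\lim_m\frac1m\log\max_{I\in\Sigma_m}\|\A_I\|$. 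Now replace each $A_i$ by $\lambda A_i$; this keeps typicality, and keeps $\B$ dominated with the same words. It adds $|\mathrm k|\log\lambda$ to the left side and $(n+2K_0)\log\lambda$ to the bound on the right. So for small $\lambda>0$, no $\mu$ satisfies \eqref{LE-relations}.

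What your argument correctly yields is the identity above. Equivalently, \eqref{LE-relations} holds when $c\ge0$, and the same inequality with $n$ in place of $n+2K_0$ holds when $c\le0$. This is also what the paper actually uses downstream. Lemma~\ref{lem: the relation between topological entropy} needs $\chi_i(\nu,\A)=n\alpha_i/\int\ell\,d\mu'$, and in the pressure comparison behind Theorem~\ref{continuity_potential}, factors of $1+O(1/n)$ are harmless. You should state and prove that corrected version rather than try to force \eqref{LE-relations} as written.
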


\begin{proof}
    It follows from \cite{Moh23}.
\end{proof}
We denote by $\vec{L}_{\mathcal{B}}$ the Lyapunov spectrum corresponding to the matrices $\mathcal{B}$, and set
\[
\Omega := \{ \vec{\chi}(\mu, \mathcal{B}) : \mu \in \M((\Sigma_{n}^{\D})^{\N}, f) \}.\]
Lemma \ref{psi is almost additive}
  ensures that 
  the family of potentials $\{\log \sigma_i(\B_I)\}$  is almost additive. By  \cite[Theorem 5.2 item (1)]{FH}, $\mathring{\vec{L}}_{\B} \subset \Omega$.
  Thus, since such a cocycle is dominated, combining \cite[Theorem 5.4]{Moh22-entropy} 
  and 
  \cite[Theorem 5.2]{FH},
  one concludes that 
  \medskip
\begin{equation}\label{eq:topological-entropy-level-set}
\begin{aligned}
h_{\mathrm{top}}(f, E^{n, \mathcal{D}}(\vec{\alpha}))
    & = \inf _{q \in \mathbb{R}^{3}}
    \left\{P\left(\log \psi^{q}(\mathcal B) \right)
    - \langle q, \vec{\alpha} \rangle \right\} \\
    & = \sup\big\{ h_{\mu}(f) \colon
    \mu \in \M((\Sigma_{n}^{\D})^{\N}, f), \;
    \chi_i(\mu, \mathcal B)=\alpha_i,
    \forall\, 1\le i\le d\big\}.
\end{aligned}
\end{equation}
for every $\alpha \in \mathring{\vec{L}}_{\B}$, where
\[
E^{n, \mathcal{D}}(\vec{\alpha}):=\bigg\{ I\in (\Sigma_{n}^{\D})^{\N}: \lim_{m\to \infty}\frac{1}{m}\log \sigma_{i}(\mathcal{B}_{I_{|m}})=\alpha_i \text{ for } 1\le i \le d \bigg\}.
\]

\begin{lem}\label{lem: the relation between topological entropy}
  Suppose that  $(A_1, \ldots, A_k)\in \glr^k$ is typical. Then,  there exists $K_0 \ge 1$ so that, for each $n>2K_0$, there is a full shift $((\Sigma_{n}^{\D})^{\N}, f)$ and a dominated $\mathcal{B}:=\left(\mathcal{A}_{\mathrm{k}}\right)_{\mathrm{k} \in \Sigma_n^{\mathcal{D}}}$ (depending on $n$) such that
\begin{equation}\label{eq:lemrel}\limsup_{n\to\infty}\frac{1}{n}h_{\mathrm{top}}\left(f, E^{n, \mathcal{D}}(n \vec{\alpha})\right) \leq h_{\mathrm {top}}(\sigma, E(\alpha)). \end{equation}
\end{lem}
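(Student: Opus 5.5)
Fix $n>2K_0$ with the dominated full shift $((\Sigma_n^{\D})^{\N},f)$ and the tuple $\mathcal B$ from Theorem~\ref{dominated-one-step-cocycle} and Proposition~\ref{relation between entropies and LE}. The plan is to pass to measures. By \eqref{eq:topological-entropy-level-set}, $h_{\mathrm{top}}(f,E^{n,\D}(n\vec\alpha))$ equals the supremum of $h_{\mu'}(f)$ over $\mu'\in\M((\Sigma_n^{\D})^{\N},f)$ with $\vec\chi(\mu',\mathcal B)=n\vec\alpha$. This is valid when $n\vec\alpha\in\mathring{\vec L}_{\mathcal B}$. If $n\vec\alpha\notin\vec L_{\mathcal B}$, the set $E^{n,\D}(n\vec\alpha)$ is empty and there is nothing to prove. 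The boundary case I would treat by a small perturbation, or simply restrict to the $\vec\alpha\in\mathring{\vec L}$ used later.

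So fix such a $\mu'$ and let $\mu\in\M(\Sigma,\sigma)$ be the measure given by Proposition~\ref{relation between entropies and LE}. Inequality \eqref{entropies-relation} gives
\[
\frac1n h_{\mu'}(f)\le \frac{n+2K_0}{n}h_\mu(\sigma)+\frac{n+2K_0}{n^2}\log(2K_0+1).
\]
The second term is $O(1/n)$, so it remains to show that $h_\mu(\sigma)$ is at most $h_{\mathrm{top}}(\sigma,E(\vec\alpha))$ up to an error that vanishes as $n\to\infty$.

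The key step is to control the Lyapunov vector of $\mu$ using \eqref{LE-relations}. Apply it with $q=\pm e_i$, where $\langle q,\Psi\rangle$ is additive along concatenations up to the almost-additivity constants of Lemma~\ref{psi is almost additive}. Combined with the construction of $\mu$ in \cite{Moh23} (an average of $\mu'$ transported to $\Sigma$, so that the inequality holds for both signs), this should give
\[
\bigl|\vec\chi(\mu,\A)-\tfrac{n}{n+2K_0}\vec\alpha\bigr|\le \frac{C}{n}.
\]
The correction comes from the words $J_1,J_2$, which have bounded length and contribute bounded norms. Hence $\vec\chi(\mu)\to\vec\alpha$ as $n\to\infty$.

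To conclude, I would use the conditional variational principle for $\A$. By \cite{Moh23,Moh22-entropy} the function
\[
H(\vec\beta):=\sup\{h_\nu(\sigma):\vec\chi(\nu)=\vec\beta\}
\]
equals $h_{\mathrm{top}}(\sigma,E(\vec\beta))$ and is concave on $\vec L$, hence continuous on $\mathring{\vec L}$. It is also upper semicontinuous in the sense that $\limsup_{\vec\beta\to\vec\alpha}H(\vec\beta)\le H(\vec\alpha)$. This follows from upper semicontinuity of entropy on the full shift together with upper semicontinuity of the Lyapunov exponents, and it suffices along sequences with $\vec\chi(\mu_n)\to\vec\alpha$, by taking weak$^*$ limits. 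Indeed, entropy is upper semicontinuous and the sums of the top $k$ exponents are upper semicontinuous. Making the exponents converge to exactly $\vec\alpha$ needs the full vector, and this is the main obstacle.

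To handle it, I would use the pressure characterisation instead:
\[
h_\mu(\sigma)\le P(\log\psi^q(\A))-\langle q,\vec\chi(\mu)\rangle\quad\text{for all } q,
\]
by Theorem~\ref{vp-for the generalized sing}. Letting $n\to\infty$ gives
\[
\limsup\tfrac1n h_{\mathrm{top}}\le\inf_q\bigl(P(\log\psi^q(\A))-\langle q,\vec\alpha\rangle\bigr),
\]
and the right-hand side equals $h_{\mathrm{top}}(\sigma,E(\vec\alpha))$ for $\vec\alpha\in\mathring{\vec L}$ by \cite[Theorem~5.4]{Moh22-entropy}. Taking the supremum over $\mu'$ and then $\limsup_{n}$ gives \eqref{eq:lemrel}.

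The delicate point is passing the limit inside the infimum over $q$. I would handle it by fixing $q$ first, taking $\limsup_n$, and only then taking the infimum, which needs no uniformity.
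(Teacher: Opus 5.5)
Your proof is correct. Its first half is the paper's argument. You pass to measures on $(\Sigma_n^{\D})^{\N}$ via \eqref{eq:topological-entropy-level-set}, transfer each $\mu'$ to some $\mu$ by Proposition~\ref{relation between entropies and LE}, and note that $\vec\chi(\mu,\A)$ lies within $O(1/n)$ of $\vec\alpha$, uniformly in $\mu'$. The paper records this as $\chi_i(\nu,\A)\in[\frac{n}{n+2K_0}\alpha_i,\alpha_i]$.

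Two small corrections to that step. The $O(1/n)$ bound really comes from the construction in \cite{Moh23}, where $\vec\chi(\mu',\B)=L\,\vec\chi(\mu,\A)$ with mean word length $L\in[n,n+2K_0]$. It does not come from inserting $q=\pm e_i$ into \eqref{LE-relations}, which read literally would force equality. Also, Lemma~\ref{psi is almost additive} concerns $\B$, not $\A$.

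The genuine difference is the closing step. The paper finishes in one line from \eqref{restricted} and upper semicontinuity of entropy. Read literally, that needs a weak$^*$ limit of near-maximising measures to have Lyapunov vector exactly $\vec\alpha$. This does not follow, since only the partial sums $\chi_1+\dots+\chi_k$ are upper semicontinuous; it is exactly the obstacle you identified.

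Your route never takes a weak$^*$ limit. For fixed $q$, the bound $h_\mu(\sigma)\le P(\log\psi^q(\A))-\langle q,\vec\chi(\mu)\rangle$ involves $\vec\chi(\mu)$ only linearly. It passes to the limit because $\vec\chi(\mu)\to\vec\alpha$ directly, and you take the infimum over $q$ only afterwards.

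The price is the Legendre form $h_{\mathrm{top}}(\sigma,E(\vec\alpha))=\inf_q\{P(\log\psi^q(\A))-\langle q,\vec\alpha\rangle\}$ for the typical, non-dominated $\A$ on $\mathring{\vec L}$. In the paper, \cite[Theorem~5.4]{Moh22-entropy} is invoked only for the dominated $\B$, so check that your citation really covers $\A$. Alternatively, derive the identity from \eqref{restricted}, Theorem~\ref{vp-for the generalized sing} and concavity of $H$ by Fenchel duality, since a concave function agrees with its closure on the relative interior of its domain.

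Simpler still, the remark you made and then set aside already closes the proof. $H$ is concave and finite on $\mathring{\vec L}$, hence continuous there relative to the affine hull. So $h_\mu(\sigma)\le H(\vec\chi(\mu))\to H(\vec\alpha)=h_{\mathrm{top}}(\sigma,E(\vec\alpha))$, uniformly in $\mu'$. Your ``main obstacle'' is therefore an obstacle only for the paper's weak$^*$ argument, not for yours.
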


\begin{proof}
    By \cite{Moh23}, for any $\vec\alpha\in\mathring{\vec L}$
\begin{equation}\label{restricted}
     h_{\text{top}} \Big(\sigma, E(\vec \alpha)\Big)=\sup\big\{ h_{\mu}(\sigma) \colon   \mu \in \M(\Sigma, \sigma), \; 
    \chi_i(\mu, \A)=\alpha_i, \forall 1\le i \le d\big\}.
\end{equation}

\smallskip
Fix $\alpha \in \mathring{\vec{L}}$.
By using 
Proposition \ref{relation between entropies and LE} and \eqref{eq:topological-entropy-level-set} , we have 
\[ 
\begin{aligned}
   h_{\text{top}} (f, E^{n, \mathcal{D}}( n \vec{\alpha})) 
   & = \sup\big\{ h_{\mu}(f) \colon   \mu \in \M( (\Sigma_{n}^{\D})^{\N}, f), \; 
 \chi_i  (\mu, \mathcal B)=  n \alpha_i,  \forall 1\le i \le 3\big\} \\
    & \le  (n+2K_0)\sup\Big\{ \, h_{\nu}(\sigma) + \frac 1n \log (2K_0+1) \colon \\
    & \qquad \qquad    \nu \in \M(\Sigma, \sigma), \; 
 \chi_i(\nu, {\mathcal A}) \color{black} \in \Big[\frac{n}{n+2K_0} \alpha_i, \alpha_i \Big],
     \forall 1\le i \le d\Big\}\\
     & \le n\sup\Big\{ \, h_{\nu}(\sigma) \colon \\
    & \qquad \qquad    \nu \in \M(\Sigma, \sigma), \; 
 \chi_i(\nu, {\mathcal A}) \color{black} \in \Big[\frac{n}{n+2K_0} \alpha_i, \alpha_i \Big],
     \forall 1\le i \le d\Big\}+C
\end{aligned}
\]
 for some $C>0$ independent of $n$. Therefore, by \eqref{restricted} and the upper semi-continuity of the measure-theoretic entropy, \eqref{eq:lemrel} follows.
\end{proof}
\section{Upper bound}\label{upper-bound}
In this section, we prove the upper bound of the main result. 

\begin{thm}\label{HD-one side}Let $\left(A_1+v_1, \ldots, A_N+v_N\right)$ be an affine IFS on $\mathbb{R}^d$. If $(A_1, \ldots, A_N)$ is typical, then
\[\dim_{\mathrm{H}}\left(\pi E(\vec{\alpha})\right) \leq \sup \left\{s \geq 0: \inf _{q \in \mathbb{R}^{d}}\left\{P^*\left(\log \psi^{s'(s)+q}(\A)\right)- \langle q,  \vec{\alpha} \rangle  \right\} \geq 0\right\}
\]
for all $\alpha \in \vec{L}$, where $s'(s)$ is defined \eqref{definition_s'}.

\end{thm}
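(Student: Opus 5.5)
Fix $\vec\alpha\in\vec L$ and let $s_0$ denote the right-hand side. Take any $s>s_0$; we will show $\mathcal H^{s}(\pi E(\vec\alpha))=0$, which gives $\dim_{\mathrm H}\pi E(\vec\alpha)\le s$. Since $s>s_0$ and the map $s\mapsto \inf_q\{P^*(\log\psi^{s'(s)+q}(\A))-\langle q,\vec\alpha\rangle\}$ is non-increasing in $s$ (each $\sigma_i\le\max_j\|A_j\|<1$, so increasing $s$ increases the exponents in $s'(s)$ and decreases $\psi$), there exist $q\in\R^d$ and $\delta>0$ with
\[
P^*\bigl(\log\psi^{s'(s)+q}(\A)\bigr)-\langle q,\vec\alpha\rangle<-2\delta .
\]
Fix $\varepsilon>0$ small compared with $\delta/(1+|q|)$. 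Decompose $E(\vec\alpha)\subset\bigcup_{M\ge1}E_M$, where $E_M$ is the set of $I$ such that $|\tfrac1n\Psi(\A_{I|_n})-\vec\alpha|<\varepsilon$ for all $n\ge M$. By countable stability it suffices to show $\mathcal H^s(\pi E_M)=0$ for each $M$.

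For $n\ge M$, cover $E_M$ by the cylinders $[J]$ with $J\in\Sigma_n$ and $[J]\cap E_M\neq\emptyset$. For such $J$ we have $|\Psi(\A_J)-n\vec\alpha|<n\varepsilon$, hence
\[
\psi^{q}(\A_J)=e^{\langle q,\Psi(\A_J)\rangle}\ge e^{n\langle q,\vec\alpha\rangle-n|q|\varepsilon}.
\]
Each $\pi[J]=(A_J+v_J)(\pi\Sigma)$ lies in the image of a fixed ball $B$ containing $X$, which is an ellipsoid with semi-axes proportional to $\sigma_1(\A_J),\dots,\sigma_d(\A_J)$. With $k=\lfloor s\rfloor$ (assume $s<d$, since for $s\ge d$ the statement is trivial because $\dim_{\mathrm H}\le d$), the standard Falconer argument covers this ellipsoid by at most
\[
C\prod_{i\le k}\frac{\sigma_i(\A_J)}{\sigma_{k+1}(\A_J)}
\]
balls of radius $\sigma_{k+1}(\A_J)$, giving an $s$-dimensional contribution of at most $C\varphi^s(\A_J)=C\psi^{s'(s)}(\A_J)$. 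Therefore
\[
\mathcal H^s_{\rho_n}(\pi E_M)\le C\sum_{J\in\Sigma_n}\psi^{s'(s)}(\A_J)\,\psi^{q}(\A_J)\,e^{-n\langle q,\vec\alpha\rangle+n|q|\varepsilon}
= C\,Z_n(s'(s)+q)\,e^{-n\langle q,\vec\alpha\rangle+n|q|\varepsilon},
\]
where $\rho_n\le(\max_j\|A_j\|)^n\to0$ and we used $\psi^{s'(s)}\psi^{q}=\psi^{s'(s)+q}$. By the definition of $P^*$, for large $n$ this is at most $Ce^{n(-2\delta+\delta+|q|\varepsilon)}\to0$. Hence $\mathcal H^s(\pi E_M)=0$.

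The only delicate points are the monotonicity in $s$, used to pick $q$ with a strict negative gap, and the $\varepsilon$-bookkeeping; note that when $q$ has negative components, the bound $\psi^q\ge e^{n\langle q,\vec\alpha\rangle-n|q|\varepsilon}$ still holds because it only uses $|\langle q,\Psi-n\vec\alpha\rangle|\le |q|\,n\varepsilon$. Typicality is not really needed for this bound, since it works directly with $P^*$. It would enter only if one wants to replace $P^*$ with $P$ via Theorem \ref{vp-for the generalized sing}.
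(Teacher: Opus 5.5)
Your proof is correct and follows the paper's argument. Both cover by cylinders $[J]$, bound the $s$-cost of the ellipsoid $\pi[J]$ by $C\varphi^s(\A_J)$, use the level-set condition to insert the weight $\psi^{q}(\A_J)e^{-n\langle q,\vec\alpha\rangle}\ge e^{-n|q|\varepsilon}$, and compare with the partition function $Z_n(s'(s)+q)$ that defines $P^*$.

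The differences are only bookkeeping. You pass to the uniform-threshold sets $E_M$ by countable stability, cover at a single level $n$, and absorb the $\varepsilon$-error into the strict gap $\delta$ by choosing $\varepsilon$ after $q$. The paper instead covers $\pi E(\vec\alpha)$ by cylinders of all levels $m\ge n$ at once, sums a geometric series in $m$, and absorbs the $1/r$-error by shifting the exponent of $\varphi$ by an amount that vanishes as $r\to\infty$. That shift has to go upward, not downward as written in the paper, and your version avoids the issue entirely.
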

\begin{proof}

 For $\vec{\alpha} \in \vec{L},$ we define
\[G(m, r):=\left\{I \in \Sigma_{m} :\text{ and }  \left|\frac{1}{m}\log \sigma_{i}(\mathcal{A}_{I})-\alpha_i\right|<\frac{1}{r}  \text{ for }i=1, \ldots, d \right\}.\]
Note that
\[E(\vec{\alpha}) \subset \bigcap_{r=1}^{\infty} \bigcup_{n=1}^{\infty} \bigcap_{m=n}^{\infty} \bigcup_{I \in G(m, r)}[I].\]
  Therefore, for every
$i \in  G(m, r)$, we have  
\begin{equation}\label{equ11}
-\sum_{i=1}^{d} \frac{|q_i|}{r}\leq \langle q,  \Psi(\A_{i})-m\vec{\alpha} \rangle.
\end{equation}

 
 Let $s_{0}(\vec{\alpha})=:\sup\left\{s\geq 0: \inf_{q\in \R^{d}}\left\{P^*(\log \psi^{s'(s)+q}(\A))- \langle \vec{\alpha}, q \rangle \right\}\geq 0\right\},$ and choose $s>s_{0}(\vec{\alpha})$. Therefore, there is $q:=q(\vec{\alpha}, s)$ such that $P^*(\log \psi^{s'(s)+q}(\A))< \langle q, \vec{\alpha} \rangle .$ Assume that $\epsilon>0$ is so small such that there exists $\gamma>0$ such that
\begin{equation}\label{equ22}
\sum_{I\in \Sig_n}\psi^{s'(s)+q}(\mathcal{A}_{I})<e^{n(\langle q, \vec{\alpha} \rangle+\epsilon)}
\end{equation}
for all $n\geq -\log \gamma$.

 Setting $\Gamma:=\max_{x\in \Sigma} \|\mathcal{A}(x)\|.$ Therefore,  \begin{equation}\label{eq33}
\varphi^{s+c}(\mathcal{A}_{I})\leq \varphi^{s}(\mathcal{A}_{I})e^{c |I|\Gamma},
\end{equation}
for all $c\geq 0$.

For $\delta< \gamma,$ by \eqref{equ11},\eqref{equ22} and \eqref{eq33}, we have
$$
\begin{aligned}
\mathcal{H}_{\delta}^{s-\frac{\sum_{i=1}^{d} |q_i|}{r\Gamma}}(\pi E(\vec{\alpha}))&\leq \sum_{m=\lceil -\log \delta \rceil}^{\infty}\sum_{I\in G(m, r)} \varphi^{s-\frac{\sum_{i=1}^{d} |q_i|}{r\Gamma}}(\mathcal{A}_{I})\\
& \leq \sum_{m=\lceil -\log \delta \rceil}^{\infty}\sum_{I\in G(m, r)} \varphi^{s}(\mathcal{A}_{I})e^{\frac{-2m\sum_{i=1}^{d} |q_i|}{r}}\\
& \leq \sum_{m=\lceil -\log \delta \rceil}^{\infty}\sum_{I\in G(m, r)} \varphi^{s}(\mathcal{A}_{I})e^{\langle q,  \Psi(\A_I)- m\vec{\alpha} \rangle}\\
&=\sum_{m=\lceil -\log \delta \rceil}^{\infty}e^{m \langle q, \vec{\alpha} \rangle }\sum_{I\in G(m, r)} \psi^{s'(s)-q}(\mathcal{A}_{I})\\
&\leq \sum_{m=\lceil -\log \delta \rceil}^{\infty} e^{-m\epsilon} \to 0
\end{aligned}
$$
when $\delta \to 0$ for all $r\geq 1.$ Therefore, $\dim_{ \mathrm{H}}(\pi E(\vec{\alpha}))\leq s-\frac{\sum_{i=1}^{d} |q_i|}{r\Gamma}.$ We get $\dim_{\mathrm{H}}(\pi E(\vec{\alpha}))\leq s_{0}(\alpha)$ as $r\geq 1$ and $s>s_{0}(\alpha).$
\end{proof}



\section{Dominated case}\label{sec:dominated-case}

Let us note that if $(A_1, \ldots, A_N)\in \glr^{N}$ is dominated then $P^*\left(\log \psi^{q}(\A)\right)=P\left(\log \psi^{q}(\A)\right)$ for any $q\in\R^d$ by Lemma \ref{psi is almost additive}.

\begin{prop}\label{existence of Bernoulli measure}
    Let $\Phi:=\left(A_1+v_i, \ldots, A_N+v_N\right)$ be an affine IFS on $\mathbb{R}^d$, where\linebreak $\{A_1, \ldots, A_{N}\} \subset \glr$. Suppose that $(A_1, \ldots, A_N)\in \glr^{N}$ is dominated. If $\alpha \in \mathring{\vec{L}}$, then for every

$$
s<\sup\left \{s \geq 0: \inf _{q \in \mathbb{R}^{d}}\left\{P\left(\log \psi^{s'(s)+q}(\A)\right)- \langle q ,\vec{\alpha} \rangle \right\} \geq 0\right\},
$$
then, there exists a fully supported $n$-step Bernoulli measure $\nu$ such that $$\operatorname{dim}_{\mathrm{L}}(\nu)= \operatorname{dim}_{\mathrm{L}}(\tilde{\nu}) \geq s$$ and $\vec{\chi}(\tilde{\nu})=\vec{\alpha}$.
\end{prop}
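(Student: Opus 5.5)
The plan is to pass from the pressure condition to an invariant measure with the right exponents and large Lyapunov dimension, and then to approximate that measure by $n$-step Bernoulli measures. We use domination throughout to keep exponents and entropy under control.

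\textbf{Step 1 (choice of $s_1$ and a pressure identity).} Fix $s$ below the supremum and choose $s<s_1<\sup\{\dots\}$ with $\inf_q\{P(\log\psi^{s'(s_1)+q}(\A))-\langle q,\vec\alpha\rangle\}\ge 0$; this is possible since the function of $s$ is non-increasing. Since $\A$ is dominated, the potentials $\langle p,\Psi(\A_I)\rangle$ are almost additive by Lemma~\ref{psi is almost additive}. Hence $P(\log\psi^p(\A))$ is a convex, finite, and (by the results of Feng--Huang \cite{FH} for almost additive families) differentiable function of $p\in\R^d$.

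By the conditional variational principle of \cite[Theorem 5.2]{FH}, valid because $\vec\alpha\in\mathring{\vec L}$ and the family is almost additive, we obtain
\[
\inf_{q}\big\{P(\log\psi^{s'(s_1)+q}(\A))-\langle q,\vec\alpha\rangle\big\}
=\sup\big\{h_\mu(\sigma)+\langle s'(s_1),\vec\alpha\rangle:\ \mu\in\M(\Sigma,\sigma),\ \vec\chi(\mu)=\vec\alpha\big\}.
\]
The supremum is attained, by an ergodic measure, since entropy is upper semicontinuous and the exponents depend continuously on $\mu$ under almost additivity. So there is an ergodic $\mu$ with $\vec\chi(\mu)=\vec\alpha$ and $h_\mu(\sigma)+\langle s'(s_1),\vec\alpha\rangle\ge0$.

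\textbf{Step 2 (from the pressure inequality to Lyapunov dimension).} Unwinding the definition of $s'(s_1)$, write $k=\lfloor s_1\rfloor$ (treating $s_1\ge d$ separately). The inequality reads $h_\mu-\sum_{i\le k}|\alpha_i|-(s_1-k)|\alpha_{k+1}|\ge0$, with $\alpha_i<0$. The expression in the definition of $\dim_{\mathrm L}$ is minimised at the index where it lands in $[k-1,k)$, so $\dim_{\mathrm L}\mu\ge s_1$. Here I use the standard fact that $\dim_{\mathrm L}\mu$ equals the unique $t$ with $h_\mu+\langle s'(t),\vec\chi(\mu)\rangle=0$, capped at $d$.

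\textbf{Step 3 (Bernoulli approximation with exact exponents).} This is the main obstacle. Approximating $\mu$ by $n$-step Bernoulli measures $\nu_n$ built from the weights $\mu([I])$, $I\in\Sigma_n$, gives $h_{\nu_n}(\sigma^n)/n\to h_\mu$. By almost additivity (domination) it also gives $\vec\chi(\tilde\nu_n)\to\vec\alpha$, with error $O(1/n)$. Mixing in a small weight of the uniform vector makes $\nu_n$ fully supported at negligible cost.

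To hit $\vec\alpha$ exactly, I would use that $\vec\alpha$ is in the relative interior. First pick finitely many ergodic measures $\mu_0,\dots,\mu_d$ with exponents forming a simplex around $\vec\alpha$ inside $\vec L$. Then approximate each, and the measure $\mu$, by $n$-step Bernoulli measures on a common step length. Finally take a suitable convex combination of the probability vectors, i.e.\ a Bernoulli measure on $\Sigma_n$ whose weights interpolate. The exponent map is continuous in the weight vector, by almost additivity and the dominated splitting giving Hölder potentials $\Phi$ via \eqref{eq:approximatewithpotential}, so a degree/intermediate value argument (Brouwer on the simplex of weights) yields exact equality $\vec\chi(\tilde\nu)=\vec\alpha$. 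The weight on $\mu$'s approximant stays close to $1$, so entropy and the dimension stay above $s$ since $s<s_1$.

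\textbf{Step 4 (the two Lyapunov dimensions agree).} The identity $\dim_{\mathrm L}\nu=\dim_{\mathrm L}\tilde\nu$ (the former computed for $\sigma^n$ with the $n$-fold products, normalised) follows from $h_\nu(\sigma^n)=nh_{\tilde\nu}(\sigma)$ and $\vec\chi(\nu,\sigma^n)=n\vec\chi(\tilde\nu)$, since the formula is homogeneous of degree zero.
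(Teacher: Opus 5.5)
Your argument is correct in substance, but it follows a genuinely different route from the paper's.

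\textbf{The paper's route.} The paper never first produces a measure with exact exponents. From $s$ below the supremum it extracts $\delta=\inf_q P(\langle s'(s)+q,\Phi\rangle-\langle q,\vec\alpha\rangle)>0$, and it uses $\vec\alpha\in\mathring{\vec L}$ to get coercivity in $q$. It then replaces the H\"older potential $\Phi$ coming from the dominated splitting by a potential $\Phi_n$ that is constant on $n$-cylinders, with $\|S_n\Phi-\Phi_n\|_\infty\le n\varepsilon$. This makes the $\sigma^n$-equilibrium states $\mu_q$ automatically $n$-step Bernoulli. The degree argument is then run in $q$-space. The gradient map $q\mapsto\int(\Phi_n-n\vec\alpha)\,d\mu_q$ covers a ball around $0$ (via \cite[Lemma 4.2]{BJKR}). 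A homotopy to $q\mapsto\int(S_n\Phi-n\vec\alpha)\,d\mu_q$ then yields $q_1$ with $\vec\chi(\tilde\mu_{q_1})=\vec\alpha$ exactly, and the dimension bound is read off from the pressure value at $q_1$. So the paper needs only the unconditional variational principle and Bowen's theory for locally constant potentials.

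\textbf{Your route.} You import the conditional variational principle of Feng--Huang to obtain an invariant $\mu$ with $\vec\chi(\mu)=\vec\alpha$ and $h_\mu+\langle s'(s_1),\vec\alpha\rangle\ge 0$. Your justification gives an invariant maximiser, not necessarily an ergodic one, but ergodicity is never used afterwards. You then work in the space of Bernoulli weights. This is more modular, and the slack $\eta=\langle s'(s)-s'(s_1),\vec\alpha\rangle>0$, together with concavity of Shannon entropy, makes the entropy bookkeeping transparent.

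\textbf{What needs sharpening in Step 3.} Continuity of $p\mapsto\vec\chi(\tilde\nu_p)$ does not by itself give a nonzero degree. Moreover, the exponents of a Bernoulli measure are not affine in its weights: under domination they equal $\int\Phi\,d\nu_p$, where $\Phi$ depends on the whole sequence. What actually makes convex interpolation work is the uniform estimate
\[
\Bigl|\vec\chi(\tilde\nu_p)-\tfrac1n\textstyle\sum_{I\in\Sigma_n}p_I\Psi(\A_I)\Bigr|\le \frac{c}{n}\quad\text{for every probability vector }p\text{ on }\Sigma_n,
\]
which follows from the almost additivity you already invoked. Put weight $1-t$ on the approximant of $\mu$ (after mixing in the uniform vector with weight $\varepsilon'$), and weights $t\beta_j$ on the approximants of $\mu_0,\dots,\mu_m$. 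The exponent map in $\beta$ is then within $O(1/n)+O(\varepsilon')$ of the affine bijection $\beta\mapsto(1-t)\vec\alpha+t\sum_j\beta_j\vec\chi(\mu_j)$. That affine map sends the weight simplex onto a simplex containing $B(\vec\alpha,tr)$. Brouwer therefore applies, provided the parameters are chosen in this order:
\begin{itemize}
\item fix $t$ first, so that $(t+\varepsilon')\log N<\eta$ remains possible;
\item then take $\varepsilon'$ and $1/n$ small compared with $tr$.
\end{itemize}

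\textbf{Dimension of the simplex.} The set $\vec L$ need not be full-dimensional; this happens, for instance, if all $|\det A_i|$ coincide. So the simplex should have $\dim\operatorname{aff}\vec L+1$ vertices rather than $d+1$. The degree argument should then be run inside $\operatorname{aff}\vec L$, where all exponent vectors of invariant measures lie.
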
 
\begin{proof}
 The proof is inspired by \cite[Proposition 4.3]{BJKR}. Note that by \eqref{eq:approximatewithpotential}
 $$ P\left(\log \psi^{s'(s)+q}(\A)\right)- \langle q ,\vec{\alpha} \rangle=P\left(\log \varphi^{s}(\A)+  \langle  q , \Psi(\A) -\vec{\alpha} \rangle\right)=P(\langle s'(s)+q,\Phi\rangle-\langle q,\vec{\alpha}\rangle).$$

 Assume that 
$s< \sup\left\{s \geq 0: \inf _{q \in \mathbb{R}^{d}}P(\langle s'(s)+q,\Phi\rangle-\langle q,\vec{\alpha}\rangle)\leq0 \right\}$. Since for any $q \in \mathbb{R}^d$,  $s \rightarrow P\left(\log \varphi^{s}(\A)+  \langle  q , \Psi(\A) -\vec{\alpha} \rangle\right) $ is strictly decreasing we have that

$$
\inf \left\{P(\langle s'(s)+q,\Phi\rangle-\langle q,\vec{\alpha}\rangle) : q \in \mathbb{R}^d\right\}>0.
$$

Since $\vec{\alpha} \in \mathring{\vec{L}}$ we can find $\eta>0$ such that for any $q \in \mathbb{R}^d$ with $|q|=1$ we can find an invariant measure $\mu$ such that $\vec{\chi}(\mu)-\vec{\alpha}=\eta q$. Therefore, by the variational principle for any $q \in \mathbb{R}^d$

$$
P(\langle s'(s)+q,\Phi\rangle-\langle q,\vec{\alpha}\rangle)  \geq h_{\mu}(\sigma)+\vec{\chi}(\mu)+\eta|q|
$$
where $h_{\mu}(\sigma)+\vec{\chi}(\mu)$ is bounded uniformly below for all invariant measures. Thus, for
$$
\delta=\inf \left\{P(\langle s'(s)+q,\Phi\rangle-\langle q,\vec{\alpha}\rangle) : q \in \mathbb{R}^d\right\}>0
$$
we can choose $q_0>0$ so that
$$
P(\langle s'(s)+q,\Phi\rangle-\langle q,\vec{\alpha}\rangle)  \geq 3 \delta
$$
whenever $|q| \geq q_0$. We fix $\varepsilon>0$ such that
$$
\varepsilon (q_0+s)<\frac{\delta}{4}.
$$
Since $\Phi$ is (H\"older) continuous, we can find functions $\Phi_n$, which is constant on $n$th level cylinders such that
 $$
 \left\|S_n \Phi-\Phi_n\right\|_{\infty} \leq n \varepsilon.
 $$
We now work with the pressure for $\sigma^n$, which we denote by $P_n$.
$$
P(\langle s'(s)+q,\Phi\rangle-\langle q,\vec{\alpha}\rangle)=\frac{1}{n}P_n(\langle s'(s)+q,S_n\Phi\rangle-\langle q,n\vec{\alpha}\rangle).
$$
Thus, we have
$$
\inf \left\{P_n(\langle s'(s)+q,S_n\Phi\rangle-\langle q,n\vec{\alpha}\rangle):|q| \leq q_0\right\}=n \delta
$$
and
$$
\inf \left\{P_n(\langle s'(s)+q,S_n\Phi\rangle-\langle q,n\vec{\alpha}\rangle):|q|=q_0\right\} \geq 3 n \delta .
$$

Since $\varepsilon(q_0+s)<\delta / 4$, we see that
\begin{equation}\label{bound for the pressure}
    \min \left\{P_n(\langle s'(s)+q,\Phi_n\rangle-\langle q,n\vec{\alpha}\rangle):|q| \leq q_0\right\} \in\left[\frac{3 n \delta}{4}, \frac{5 n \delta}{4}\right]
\end{equation}
and
$$
\min \left\{P_n(\langle s'(s)+q,\Phi_n\rangle-\langle q,n\vec{\alpha}\rangle):|q|=q_0\right\} \geq \frac{11 n \delta}{4}.
$$
By Bowen~\cite[Theorem~1.22]{Bow}, for every $q\in\R^d$ there exists a unique $\sigma^n$-invariant measure $\mu_q$ such that
$$
P_n(\langle s'(s)+q,\Phi_n\rangle-\langle q,n\vec{\alpha}\rangle)=h_{\mu_q}(\sigma^n)+\int\langle s'(s)+q,\Phi_n\rangle-\langle q,n\vec{\alpha}\rangle\mathrm{d}\mu_q.
$$
Since $\Phi_n$ is locally constant and therefore Hölder continuous, the function\linebreak $q \mapsto P_n(\langle s'(s)+q,\Phi_n\rangle-\langle q,n\vec{\alpha}\rangle)$ is analytic and convex. Moreover, for any $q_* \in \mathbb{R}^d$, we have
$$
\left.\nabla\right|_{q=q_*} P_n(\langle s'(s)+q,\Phi_n\rangle-\langle q,n\vec{\alpha}\rangle)=\int\left(\Phi_n-n \vec{\alpha}\right) \mathrm{d} \mu_{q_*}
$$
where $\mu_{q_*}$ is the equilibrium state for $\langle s'(s)+q,\Phi_n\rangle-\langle q,n\vec{\alpha}\rangle$. Note that the set
$$
Q=\left\{q:  P_n(\langle s'(s)+q,\Phi_n\rangle-\langle q,n\vec{\alpha}\rangle) \leq 2 n \delta\right\} \subset B\left(0, q_0\right)
$$
is convex. By convexity and \eqref{bound for the pressure}, we get
$$
\begin{aligned}
& \left|\nabla P_n(\langle s'(s)+q,\Phi_n\rangle-\langle q,n\vec{\alpha}\rangle)\right|\\
&\qquad\geq \frac{P_n(\langle s'(s)+q,\Phi_n\rangle-\langle q,n\vec{\alpha}\rangle)-P_n(\langle s'(s)+\tilde{q},\Phi_n\rangle-\langle\tilde{q},n\vec{\alpha}\rangle)}{|q-\tilde{q}|} \geq \frac{3 n \delta}{8 q_0}
\end{aligned}
$$
for all $q \in \partial Q$. Define $f_1, f_2: Q \rightarrow \mathbb{R}^d$ by setting
$$
f_1(q)=\int\left(\Phi_n-n \vec{\alpha}\right) \mathrm{d} \mu_q=\nabla P_n(\langle s'(s)+q,\Phi_n\rangle-\langle q,n\vec{\alpha}\rangle)
$$
and
$$
f_2(q)=\int\left(S_n \Phi-n \vec{\alpha}\right) \mathrm{d} \mu_q.
$$
By \cite[Lemma 4.2]{BJKR}, $f_1(Q) \supset B\left(0, \frac{3 n \delta}{8 q_0}\right)$. Since
$$
\left\|f_1-f_2\right\|_{\infty} \leq \varepsilon n<\frac{3 n \delta}{8 q_0}
$$
we have
$$
\left|(1-t) f_1(q)+t f_2(q)\right| \geq \frac{3 \delta n}{8 q_0}-t\left|f_1(q)-f_2(q)\right| \geq \frac{3 \delta n}{8 q_0}-\varepsilon n>0
$$
for all $t \in[0,1]$ and $q \in \partial Q$. It follows that $\left.f_1\right|_{\partial Q}$ and $\left.f_2\right|_{\partial Q}$ are homotopic on $\mathbb{R}^d \backslash\{0\}$ and hence $0 \in f_2(Q)$. This means that there exists $q_1 \in Q$ such that $\int S_n \Phi \mathrm{~d} \mu_{q_1}=n \vec{\alpha}$. Now, by \eqref{bound for the pressure},
$$
\begin{aligned}
\frac{3 n \delta}{4} & \leq P_n(\langle s'(s)+q_1,\Phi_n\rangle-\langle q_1,n\vec{\alpha}\rangle)\\
&=h_{\mu_{q_1}}( \sigma^n)+\int \langle s'(s)+q_1,\Phi_n\rangle-\langle q_1,n\vec{\alpha}\rangle \mathrm{d} \mu_{q_1}\\
&=h_{\mu_{q_1}}( \sigma^n)+\int \langle s'(s),S_n\Phi\rangle \mathrm{d} \mu_{q_1}+\frac{n\delta}{4}\\
&=h_{\mu_{q_1}}( \sigma^n)+\langle s'(s),n\vec{\chi}(\tilde{\mu}_{q_1}, \A)\rangle+\frac{n\delta}{4},\\
\end{aligned}
$$
where $\tilde{\mu}_{q_1}=\frac{1}{n}(S_n)_*\mu_{q_1}$ is $\sigma$-invariant $n$-step Bernoulli measure. Hence,
$$
0\geq\frac{\delta}{2\chi_{\lfloor s\rfloor+1}(\tilde{\mu}_{q_1},\A)}\geq (s-\lfloor s\rfloor)+\frac{h_{\tilde{\mu}_{q_1}}(\sigma)+\sum_{k=1}^{\lfloor s\rfloor}\chi_k(\tilde{\mu}_{q_1},\A)}{\chi_{\lfloor s\rfloor+1}(\tilde{\mu}_{q_1},\A)}=s-\dim_{\mathrm L}\tilde{\mu}_{q_1},
$$
which completes the proof.
\end{proof}

\begin{thm}\label{thm:usefulequations}
Let $\A:=\left(A_1, \ldots, A_N\right)$ be a finite tuple of matrices $A_i\in \glr$. Suppose that $(A_1, \ldots, A_N)$ is dominated. Then
$$
\begin{aligned}
 & \sup\left \{s \geq 0: \inf _{q \in \mathbb{R}^{d}}\left\{P\left(\log \psi^{s'(s)+q}(\A)\right)- \langle q ,\vec{\alpha} \rangle \right\} \geq 0\right\}\\
&\qquad=\sup \left\{\operatorname{dim}_{\mathrm{L}}\mu : \mu \in \M(\Sigma, \sigma) \text { and } \vec{\chi}(\mu)=\vec{\alpha}\right\} \\
&\qquad=\sup \left\{\operatorname{dim}_{\mathrm{L}}\mu : \mu \in \Ek_\sigma(\Sigma) \text { and } \vec{\chi}(\mu)=\vec{\alpha}\right\} \\
&\qquad=\sup \left\{\operatorname{dim}_{\mathrm{L}}\mu : \mu \text{ is a $n$-step Bernoulli measure } \text { and } \vec{\chi}(\mu)=\vec{\alpha}\right\} \\
&\qquad=\min _{k \in\{1, \ldots, d\}}\left\{k-1+\frac{h_{\text{top}}(E(\vec{\alpha}))-\sum_{i=1}^{k-1} \alpha_i}{\alpha_k}\right\}.
\end{aligned}
$$
\end{thm}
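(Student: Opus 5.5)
Denote the five quantities in Theorem~\ref{thm:usefulequations} by $s_0, D_1, D_2, D_3, D_4$ in order of appearance. The plan is to close the chain
\[
s_0\le D_3\le D_2\le D_1\le s_0,
\]
and then to show separately that $D_1=D_4$. Throughout, fix $\vec\alpha$ in the relative interior $\mathring{\vec L}$, which is the setting where Proposition~\ref{existence of Bernoulli measure} and \eqref{eq:topological-entropy-level-set} are available.

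\textbf{The chain $s_0\le D_3\le D_2\le D_1$.}

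\begin{itemize}
\item $s_0\le D_3$: this is exactly Proposition~\ref{existence of Bernoulli measure}. For every $s<s_0$ it produces an $n$-step Bernoulli $\nu$ with $\vec\chi(\tilde\nu)=\vec\alpha$ and $\dim_{\mathrm L}\nu=\dim_{\mathrm L}\tilde\nu\ge s$.
\item $D_3\le D_2$: here we must interpret the Lyapunov exponents and dimension of an $n$-step Bernoulli measure through its ergodic average $\tilde\nu\in\Ek_\sigma(\Sigma)$. Since $h_\nu(\sigma^n)=nh_{\tilde\nu}(\sigma)$, the exponents also scale by $n$, so $\dim_{\mathrm L}\nu=\dim_{\mathrm L}\tilde\nu$.
\item $D_2\le D_1$: this is trivial, since ergodic measures are invariant.
\end{itemize}

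\textbf{The inequality $D_1\le s_0$.} Take an invariant $\mu$ with $\vec\chi(\mu)=\vec\alpha$ and any $s<\dim_{\mathrm L}\mu$. Write $k=\lfloor s\rfloor$. Since $s<\dim_{\mathrm L}\mu$, we have
\[
h_\mu(\sigma)+\sum_{i\le k}\alpha_i+(s-k)\alpha_{k+1}>0,
\]
that is, $h_\mu(\sigma)+\langle s'(s),\vec\alpha\rangle>0$. For $s\ge d$ the same holds with the determinant convention.

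By the variational principle (Theorem~\ref{vp-for the generalized sing}, or directly the almost-additive variational principle in the dominated case), for every $q\in\R^d$,
\[
P(\log\psi^{s'(s)+q}(\A))-\langle q,\vec\alpha\rangle\ \ge\ h_\mu(\sigma)+\langle s'(s)+q,\vec\chi(\mu)\rangle-\langle q,\vec\alpha\rangle\ =\ h_\mu(\sigma)+\langle s'(s),\vec\alpha\rangle\ >\ 0.
\]
Hence the infimum over $q$ is $\ge0$, so $s\le s_0$. Letting $s\uparrow\dim_{\mathrm L}\mu$ gives $D_1\le s_0$. This closes the chain.

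\textbf{The identity $D_1=D_4$.} By the variational formula for the entropy of the level set (second line of \eqref{eq:topological-entropy-level-set}, applied to $\A$ itself, which is dominated),
\[
h:=h_{\mathrm{top}}(E(\vec\alpha))=\sup\{h_\mu(\sigma):\vec\chi(\mu)=\vec\alpha\}.
\]
For measures with fixed exponents $\vec\alpha$, the map $\mu\mapsto\dim_{\mathrm L}\mu$ is a minimum over $k$ of functions that are affine and increasing in $h_\mu(\sigma)$. The $\alpha_k$ are negative, so dividing by $\alpha_k$ flips signs, and each term should be read as $k-1+\frac{h-\sum_{i<k}\alpha_i}{|\alpha_k|}$ with the usual sign convention. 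Concretely, $\dim_{\mathrm L}\mu=g(h_\mu(\sigma))$ for a continuous nondecreasing function $g$ depending only on $\vec\alpha$. Therefore
\[
D_1=g\bigl(\sup h_\mu(\sigma)\bigr)=g(h)=D_4.
\]

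\textbf{Main obstacle.} The only delicate point is making this monotonicity argument correct. It requires:
\begin{itemize}
\item checking that the minimum formula for $\dim_{\mathrm L}$ agrees with the piecewise definition via $s'(s)$, including the regime $s\ge d$;
\item checking that the supremum of entropies is approached by admissible measures, so that continuity of $g$ suffices.
\end{itemize}
Everything else reduces to Proposition~\ref{existence of Bernoulli measure} and the variational principle.
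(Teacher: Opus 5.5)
Your proof is correct, but it closes the loop at a different point than the paper does.

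The paper proves the cycle $s_0\le D_3\le D_2\le D_1\le D_4\le s_0$.
\begin{itemize}
\item The first three inequalities are exactly yours, via Proposition~\ref{existence of Bernoulli measure}.
\item The step $D_1\le D_4$ is declared clear from $h_\mu(\sigma)\le h_{\mathrm{top}}(E(\vec\alpha))$.
\item The step $D_4\le s_0$ comes from a counting argument. Restricting the partition function to words in $\Sigma_n(\vec\alpha,\varepsilon)$ gives $P(\log\psi^{s'(s)+q}(\A))-\langle q,\vec\alpha\rangle\ge h_{\mathrm{top}}(E(\vec\alpha))+\langle s'(s),\vec\alpha\rangle-O(\varepsilon)$. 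This uses only that the Bowen entropy of $E(\vec\alpha)$ is bounded by the growth rate of $\#\Sigma_n(\vec\alpha,\varepsilon)$.
\end{itemize}

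You instead go from $D_1$ straight back to $s_0$ via the easy half of the variational principle for the almost additive potential $\langle s'(s)+q,\Psi\rangle$. This handles non-ergodic measures directly, with no reference to level sets. You then attach $D_4$ separately, through the conditional variational principle $h_{\mathrm{top}}(E(\vec\alpha))=\sup\{h_\mu(\sigma):\vec\chi(\mu)=\vec\alpha\}$ and the fact that, at fixed exponents, $\dim_{\mathrm L}$ is a continuous nondecreasing function of entropy.

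The trade-off runs both ways:
\begin{itemize}
\item Your $D_4\le D_1$ uses the upper bound $h_{\mathrm{top}}(E(\vec\alpha))\le\sup\{h_\mu(\sigma):\vec\chi(\mu)=\vec\alpha\}$. That includes the convex-duality part of the Feng--Huang theorem, whereas the paper's $D_4\le s_0$ needs only the elementary covering estimate.
\item Conversely, the paper's ``clear'' step $D_1\le D_4$ is, for non-ergodic $\mu$, exactly the nontrivial lower bound in that same theorem. For ergodic $\mu$ it is just Bowen's lemma, since $\mu(E(\vec\alpha))=1$. Your write-up makes this dependence explicit rather than hiding it.
\end{itemize}

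One cosmetic point: with negative exponents, the $k$-th term should read $k-1+\frac{h+\sum_{i<k}\alpha_i}{|\alpha_k|}$, not $k-1+\frac{h-\sum_{i<k}\alpha_i}{|\alpha_k|}$. This does not affect the monotonicity in $h$ that you use.
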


\begin{proof}
By Proposition \ref{existence of Bernoulli measure} and Lemma \ref{psi is almost additive},  we have
$$
\begin{aligned}
&\sup\left \{s \geq 0: \inf _{q \in \mathbb{R}^{d}}\left\{P\left(\log \psi^{s'(s)+q}(\A)\right)- \langle q ,\vec{\alpha} \rangle \right\} \geq 0\right\}\\
&\qquad\leq \sup \left\{\operatorname{dim}_{\mathrm{L}}\mu : \mu \text{ is a $n$-step Bernoulli measure } \text { and } \vec{\chi}(\mu)=\vec{\alpha}\right\} \\
&\qquad\leq \sup \left\{\operatorname{dim}_{\mathrm{L}}\mu : \mu \in \Ek_\sigma(\Sigma) \text { and } \vec{\chi}(\mu)=\vec{\alpha}\right\}\\
&\qquad\leq \sup \left\{\operatorname{dim}_{\mathrm{L}}\mu : \mu \in \M(\Sigma, \sigma) \text { and } \vec{\chi}(\mu)=\vec{\alpha}\right\}.\\
\end{aligned}
$$

Since, for every $\mu \in \M(\Sigma, \sigma)$ with $\vec{\chi}(\mu)=\vec{\alpha}$, we clearly have
$$
\begin{aligned}
&\operatorname{dim}_{\mathrm{L}}(\mu) \leq  \min _{k \in\{1, \ldots, d\}}\left\{k-1+\frac{h_{\text{top}}(E(\vec{\alpha}))-\sum_{i=1}^{k-1} \alpha_i}{\alpha_k}\right\}.
\end{aligned}
$$

 Write \[\Sigma_n(\vec{\alpha}, \varepsilon)=\left\{i \in \Sigma_{n} :  \left|\frac{1}{n}\log \sigma_{i}(\mathcal{A}_{i})-\alpha_i\right|<\ep  \text{ for }i=1, \ldots, d \right\}.\] Observe that
$$
\begin{aligned}
P\left(\log \psi^{s^{\prime}(s)-q}(\A)\right) & \geq \liminf _{n \rightarrow \infty} \frac{1}{n} \log \sum_{\mathrm{i} \in \Sigma_n(\vec{\alpha}, \varepsilon)} \psi^{s^{\prime}(s)+q}(\A_i) \\
& \geq-\left\langle s^{\prime}(s), \vec{\alpha}\right\rangle-C\left(\left|s^{\prime}(s)\right|+|q|\right) \varepsilon+\liminf _{n \rightarrow \infty} \frac{1}{n} \log \# \Sigma_n(\vec{\alpha}, \varepsilon)
\end{aligned}
$$
for all $s'(s)$, $q \in \R^d$. Recalling the definition of the topological entropy (see \cite{Bowen}), as $\varepsilon > 0$ is arbitrary, we get that

$$
\inf _{q \in \mathbb{R}^3} P\left(\log \psi^{s^{\prime}(s)-q}(\A)\right) \geq  h_{\text{top}} \Big(\sigma, E(\vec \alpha)\Big)-\left\langle s^{\prime}(s), \vec{\alpha}\right\rangle.
$$

Thus,
$$
\begin{aligned}
&\min _{k \in\{1, \ldots, d\}}\left\{k-1+\frac{h_{\text{top}}(E(\vec{\alpha}))-\sum_{i=1}^{k-1} \alpha_i}{\alpha_k}\right\} \\
&\qquad \leq \sup \left\{s \geq 0: \inf _{q \in \mathbb{R}^3}\left\{P\left(\log \psi^{s^{\prime}(s)-q}(\A)\right)-\langle q, \vec{\alpha}\rangle\right\} \geq 0\right\}.
\end{aligned}
$$
\end{proof}

We remind the reader that the (lower) Hausdorff dimension of the measure $\mu$ on $\mathbb{R}^2$ is defined by
$$
\operatorname{dim}_{\mathrm{H}}(\mu)=\inf \left\{\operatorname{dim}_{\mathrm{H}}(A): \mu(A)>0\right\} .
$$

To provide the lower bounds in Theorem \ref{main result for dominated case}, we find invariant measures with prescribed integrals and Lyapunov exponents, for which we can compute the Hausdorff dimension. The following theorem due to Rapaport \cite{Rapaport2024SelfAffine} guarantees that $n$-step Bernoulli measures can be used for this purpose.
\begin{thm}\label{Rapaport}
 Let $\left(A_1+v_1, \ldots, A_N+v_N\right)$ be an affine IFS on $\mathbb{R}^3$ satisfying the SOSC. If  $\left(A_1, \ldots, A_N\right) \in \gl3^N$ is typical, then
$$
\operatorname{dim}_{\mathrm{H}}\left(\pi_* \mu\right)=\operatorname{dim}_{\mathrm{L}}(\mu)
$$
for all Bernoulli measures $\mu$ on $\Sigma$.
    \end{thm}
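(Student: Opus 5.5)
The plan is to derive the statement from Rapaport's dimension theorem for self-affine measures in $\mathbb{R}^3$ rather than to prove it from scratch. That theorem states the following: if the linear parts generate a semigroup whose action on $\mathbb{R}^3$ is strongly irreducible and proximal, and the IFS satisfies a suitable separation hypothesis, then every self-affine measure $\pi_*\mu$ with $\mu$ Bernoulli has $\dim_{\mathrm H}\pi_*\mu=\dim_{\mathrm L}\mu$. The separation hypothesis can be exponential separation, and Rapaport records that the SOSC is sufficient. The work therefore reduces to checking that typicality in the sense of Definition~\ref{typical1}, together with the SOSC, yields his hypotheses.

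\textbf{Strong irreducibility.} Suppose a finite union $V_1\cup\dots\cup V_m$ of proper nonzero subspaces were invariant under every $\A_K$. Enlarging or shrinking each $V_j$ as needed, one produces a line $F$ and finitely many planes $G_1,\dots,G_m$ with $\A_K F\subset\bigcup_j G_j$ for every $K\in\Sigma_*$. This contradicts item (ii) of Definition~\ref{typical1} with $i=1$. If $\A_K F$ lay in some $G_j$ then $\A_KF\cap G_j\neq\{0\}$; item (ii) instead gives a $K$ with $\A_KF\cap G_j=\{0\}$ for all $j$. The same argument with $i=2$, or dually applied to the transposes, gives strong irreducibility of the action on $\wedge^2\mathbb{R}^3$. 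This dual statement is also needed in $\mathbb{R}^3$.

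\textbf{Proximality.} Item (i) supplies elements $\A_K$ with $\sigma_1(\A_K)/\sigma_2(\A_K)\to\infty$. Normalise these, pass to a limit point in the projective space of matrices, and combine with strong irreducibility via the standard Goldsheid--Margulis / Guivarc'h--Raugi argument. This produces a rank-one limit and hence a proximal element. The same applies on $\wedge^2$, because $\sigma_2\sigma_3$ versus $\sigma_1\sigma_3$ is controlled by the eccentricity as well.

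\textbf{Separation.} With the algebraic hypotheses in place, either invoke the SOSC version of Rapaport's theorem directly, or reduce SOSC to the separation Rapaport needs. For the reduction, choose a cylinder $[I]$ with $\pi[I]\subset U$. Then the sub-IFS generated by words that pass through $I$ satisfies the SSC, and $\pi$ is injective $\mu$-a.e. For Bernoulli $\mu$, the local dimension is determined on such a cylinder by self-similarity of $\mu$ under $\sigma$.

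The main obstacle is this separation step: making sure the precise separation notion used by Rapaport follows from the SOSC as stated here, without any algebraicity of the entries. I would first try to cite Rapaport's SOSC corollary verbatim. Failing that, I would run the cylinder-restriction argument above and use the fact that $\dim_{\mathrm H}\pi_*\mu$ is a $\mu$-a.e.\ local quantity.
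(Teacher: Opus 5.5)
Your main route is the paper's: its proof of this statement consists solely of citing \cite[Theorem 1.9]{Rapaport2024SelfAffine}, and your checks correctly fill in what the paper leaves implicit, namely that item (ii) of Definition~\ref{typical1} with $i=1$ gives strong irreducibility and item (i) plus irreducibility gives a proximal element (for $\wedge^2\mathbb{R}^3$ the relevant ratio is $\sigma_1\sigma_2/(\sigma_1\sigma_3)=\sigma_2/\sigma_3$, not the pair you wrote, but large eccentricity controls it as well). Two caveats: your cylinder-restriction fallback would not work, because the SSC sub-IFS carries different self-affine measures, but it is also unnecessary, since pairwise disjointness of the images of $U$ under distinct $n$-fold compositions already forces those compositions, evaluated at a fixed $y\in U$, to be at least $c^n$ apart, which is exponential separation; and Rapaport's theorem is stated for positive probability vectors, so ``all Bernoulli measures'' must be read as fully supported ones (a sub-tuple of a typical tuple need not be typical), which is all the paper later uses.
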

\begin{proof}
    It follows from \cite[Theorem 1.9]{Rapaport2024SelfAffine}.
\end{proof}
We extend this result to higher-dimensional self-affine sets in Theorem \ref{thm:everyergodic} under the transversality assumption. Combining Theorem~\ref{thm:usefulequations} and Theorem~\ref{Rapaport}, we can have the following immediate result.

\begin{thm}\label{main result for dominated case}
Let $\Theta:=\left(A_1+v_i, \ldots, A_N+v_N\right)$ be an affine IFS on $\mathbb{R}^3$ satisfying SOSC, where $\{A_1, \ldots, A_{N}\} \in \gl3$. Suppose that $(A_1, \ldots, A_N)$ is dominated and typical. Then
$$
\begin{aligned}
\dim_{\mathrm{H}}\left(\pi E(\vec{\alpha})\right) & =\sup\left \{s \geq 0: \inf _{q \in \mathbb{R}^{3}}\left\{P\left(\log \psi^{s'(s)+q}(\A)\right)- \langle q ,\vec{\alpha} \rangle \right\} \geq 0\right\}\\
&=\sup \left\{\operatorname{dim}_{\mathrm{L}}\mu : \mu \in \M(\Sigma, \sigma) \text { and } \vec{\chi}(\mu)=\vec{\alpha}\right\} \\
&=\sup \left\{\operatorname{dim}_{\mathrm{L}}\mu : \mu \text{ is an $n$-step Bernoulli measure } \text { and } \vec{\chi}(\mu)=\vec{\alpha}\right\} \\
& =\min _{k \in\{1, 2, 3\}}\left\{k-1+\frac{h_{\text{top}}(E(\vec{\alpha}))-\sum_{i=1}^{k-1} \alpha_i}{\alpha_k}\right\}
\end{aligned}
$$
for $\vec{\alpha} \in \mathring{L}.$
\end{thm}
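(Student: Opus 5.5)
The proof combines the upper bound of Theorem~\ref{HD-one side} with a lower bound obtained from a single well-chosen measure. Theorem~\ref{thm:usefulequations}, which only needs domination, already gives all the equalities among the pressure formula, the three suprema of Lyapunov dimensions, and the $h_{\mathrm{top}}$ formula. So only two things remain: an upper bound for $\dim_{\mathrm H}(\pi E(\vec\alpha))$ by the pressure quantity, and a lower bound by the supremum of $\dim_{\mathrm L}$ over $n$-step Bernoulli measures with exponents $\vec\alpha$.

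\textbf{Upper bound.} Theorem~\ref{HD-one side} bounds $\dim_{\mathrm H}(\pi E(\vec\alpha))$ by the threshold defined with $P^*$. Under domination, the remark at the start of this section (via Lemma~\ref{psi is almost additive}) gives $P^*=P$, so this threshold is the first expression in the statement.

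\textbf{Lower bound.} Fix $\vec\alpha\in\mathring{\vec L}$ and $s$ strictly below the pressure threshold.
\begin{enumerate}
\item Proposition~\ref{existence of Bernoulli measure} yields a fully supported $n$-step Bernoulli measure $\nu$ with $\vec\chi(\tilde\nu)=\vec\alpha$ and $\dim_{\mathrm L}\nu=\dim_{\mathrm L}\tilde\nu\ge s$.
\item Regard $\nu$ as a Bernoulli measure for the iterated system $\Theta^n=(A_I+v_I)_{I\in\Sigma_n}$. This system has the same attractor and the same projection $\pi$, still satisfies the SOSC (take the same open set $U$ and iterate the disjointness), and its linear parts $(\A_I)_{I\in\Sigma_n}$ are still typical: they generate a subsemigroup of the original one, and conditions (i)--(ii) pass to it via the usual argument.
\item Apply Theorem~\ref{Rapaport} to $\Theta^n$. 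This gives $\dim_{\mathrm H}\pi_*\nu=\dim_{\mathrm L}$ of $\nu$ computed for $\sigma^n$, which equals $\dim_{\mathrm L}\tilde\nu$, because entropy and exponents both scale by $n$.
\item Since $\pi_*\tilde\nu=\frac1n\sum_{k=0}^{n-1}\pi_*(\nu\circ\sigma^{-k})$, and each $\pi_*(\nu\circ\sigma^{-k})$ is a Bernoulli projection with the same Lyapunov dimension, we get $\dim_{\mathrm H}\pi_*\tilde\nu\ge s$.
\item Ergodicity of $\tilde\nu$ and $\vec\chi(\tilde\nu)=\vec\alpha$ give $\tilde\nu(E(\vec\alpha))=1$ by Oseledets' theorem. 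Hence $\pi_*\tilde\nu(\pi E(\vec\alpha))=1$, so $\dim_{\mathrm H}\pi E(\vec\alpha)\ge s$.
\item Letting $s$ increase to the threshold gives the lower bound.
\end{enumerate}

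\textbf{Main obstacle.} The delicate point is step 2, verifying that typicality passes to the $n$-th iterate $(\A_I)_{I\in\Sigma_n}$. Condition (i) is easy, since powers of high-eccentricity words preserve large eccentricity up to taking long enough words. For condition (ii), the word $K$ given by typicality of the original system need not have length divisible by $n$. I would try to fix this by padding $K$ with a suitable extra word, using the openness of the transversality condition together with domination. Failing that, I would invoke the equivalence with the Avila--Viana definition, which is stable under passing to iterates.
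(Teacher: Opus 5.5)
Your route is the paper's: the upper bound comes from Theorem~\ref{HD-one side} with $P^*=P$ under domination, the lower bound from applying Theorem~\ref{Rapaport} to the $n$-step Bernoulli measures of Proposition~\ref{existence of Bernoulli measure}, and Theorem~\ref{thm:usefulequations} supplies the remaining equalities. The paper applies Theorem~\ref{Rapaport} to $n$-step Bernoulli measures without comment. Your steps 2--5 (passing to $\Theta^n$, checking the SOSC, computing the Lyapunov dimension under $\sigma^n$, and showing $\tilde\nu(E(\vec\alpha))=1$) make explicit what the paper leaves implicit.

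The one argument that fails as written is your justification that $(\A_I)_{I\in\Sigma_n}$ is typical. Powers need not keep large eccentricity. For example, take $D=\mathrm{diag}(\lambda,1,\lambda^{-1})$ and let $P$ be the involution exchanging $e_1$ and $e_3$. Then $B=PD$ has $\operatorname{Ecc}(B)=\lambda$, but $B^2=I$.

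Both conditions follow instead by padding on the left with fixed words $W_r$ of length $r$, $0\le r<n$. This needs neither domination nor the Avila--Viana equivalence.
\begin{itemize}
\item For (i): given $J$, choose $r\equiv-|J| \pmod n$. Then $|W_rJ|\in n\mathbb{N}$ and $\operatorname{Ecc}(\A_{W_r}\A_J)\ge \frac{\sigma_d(\A_{W_r})}{\sigma_1(\A_{W_r})}\operatorname{Ecc}(\A_J)$. There are only finitely many $W_r$, so the prefactor is bounded below uniformly.
\item For (ii): apply the original condition to $F$ and the enlarged finite family $\{\A_{W_r}^{-1}G_j: 0\le r<n,\ 1\le j\le m\}$ to get a word $K$. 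Take $r\equiv-|K|\pmod n$. Then $\A_{W_rK}F\cap G_j=\A_{W_r}(\A_KF\cap \A_{W_r}^{-1}G_j)=\{0\}$ for every $j$.
\end{itemize}

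A smaller point concerns step 4. The measure $\nu\circ\sigma^{-k}$ is not Bernoulli for the $n$-block structure, because its blocks straddle those of $\nu$. What is true is
\[
\pi_*(\nu\circ\sigma^{-k})=\sum_{|w|=n-k}\bar p_w\,(f_w)_*\pi_*\nu, \qquad \bar p_w=\sum_{|u|=k}\nu([uw])>0,
\]
where $f_w$ is the composition of the maps $A_{w_\ell}+v_{w_\ell}$ along $w$. This is a finite positive combination of affine images of $\pi_*\nu$, so it has the same Hausdorff dimension as $\pi_*\nu$. You can also avoid $\tilde\nu$ altogether. Since $\nu$ is $\sigma^n$-ergodic with exponents $n\vec\alpha$, and interpolating between multiples of $n$ costs only $O(1)$, you get $\nu(E(\vec\alpha))=1$ directly.
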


\begin{proof}
Recall that the limit in the definition of the topological pressure exists by Lemma~\ref{psi is almost additive}. Applying Theorem \ref{HD-one side} for $d=3$, 
\begin{equation}\label{1:eq}
\dim_{H}(\pi(E(\vec{\alpha}))\leq \left\{s \geq 0: \inf _{q \in \mathbb{R}^{3}}\left\{P\left(\log \psi^{s'(s)+q}(\A)\right)- \langle q ,\vec{\alpha} \rangle \right\} \geq 0\right\}.
\end{equation}

By Theorem \ref{Rapaport}, 
$$
\begin{aligned}
&\sup \left\{\operatorname{dim}_{\mathrm{L}}\mu : \mu \text{ is an $n$-step Bernoulli measure } \text { and } \vec{\chi}(\mu)=\vec{\alpha}\right\} \\
&\qquad=\sup \left\{\operatorname{dim}_{\mathrm{H}}\pi_*\mu : \mu \text{ is an $n$-step Bernoulli measure } \text { and } \vec{\chi}(\mu)=\vec{\alpha}\right\}\\
&\qquad\leq \operatorname{dim}_{\mathrm{H}}\left(\pi E(\vec{\alpha})\right).
\end{aligned}$$
Then the claim follows by Theorem~\ref{thm:usefulequations} applied for $d=3$.
\end{proof}

\section{Proof of Theorem \ref{t:main-result1}}\label{proof-main-result1}
We need the following Lemma to finish the proof of Theorem \ref{t:main-result1}.
\begin{lem}\label{lem:VP bound}
Let $\A:=\left(A_1, \ldots, A_N\right)$ be a finite tuple of matrices $A_i\in \glr$. Suppose that $(A_1, \ldots, A_N)$  is typical. Then, 
\[
\begin{aligned}
&\min_{k \in{1, \ldots, d}}
\left\{
k-1+
\frac{h_{\mathrm{top}}(E(\vec{\alpha}))
-\sum_{i=1}^{k-1} \alpha_i}{\alpha_k}
\right\} \\
&\qquad \leq
\sup \left\{
s \geq 0:
\inf _{q \in \mathbb{R}^3}
\left\{
P\left(\log \psi^{s^{\prime}(s)-q}(\A)\right)
-\langle q, \vec{\alpha}\rangle
\right\}
\geq 0
\right\}.
\end{aligned}
\]
\end{lem}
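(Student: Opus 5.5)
Let $D:=\min_{k}\{k-1+(h_{\mathrm{top}}(E(\vec\alpha))-\sum_{i<k}\alpha_i)/\alpha_k\}$ denote the left-hand side. The plan is to show that every $s<D$ lies in the set on the right-hand side, i.e.\ that for such $s$ and every $q\in\R^d$,
\[
P\bigl(\log\psi^{s'(s)+q}(\A)\bigr)-\langle q,\vec\alpha\rangle\ \ge\ 0 ,
\]
where I read the sign convention $s'(s)-q$ together with $-\langle q,\vec\alpha\rangle$ as the same statement after replacing $q$ by $-q$, in line with the rest of the paper. The argument mirrors the last part of the proof of Theorem~\ref{thm:usefulequations}. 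There, almost additivity was used to bound the pressure below by a sum over $\Sigma_n(\vec\alpha,\ep)$. In the typical case we no longer have almost additivity, but we have the variational principle of Theorem~\ref{vp-for the generalized sing} and the restricted variational principle \eqref{restricted} from \cite{Moh23}.

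First I assume $\vec\alpha\in\mathring{\vec L}$; otherwise $h_{\mathrm{top}}(E(\vec\alpha))$ is not controlled by \eqref{restricted}, and the lemma is only needed on the interior. Fix $\eta>0$. By \eqref{restricted} there is $\mu\in\M(\Sigma,\sigma)$ with $\vec\chi(\mu)=\vec\alpha$ and $h_\mu(\sigma)\ge h_{\mathrm{top}}(E(\vec\alpha))-\eta$. Since $\log\psi^{p}(\A_I)=\langle p,\Psi(\A_I)\rangle$ and the exponents are limits of integrals, we get $\lim_n\frac1n\int\log\psi^{p}(\A_{x|_n})\,d\mu=\langle p,\vec\alpha\rangle$ for every $p\in\R^d$. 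Note that this linearity holds even for nonergodic $\mu$, because $\vec\chi(\mu)$ is defined through integrals of $\log\sigma_i$. Theorem~\ref{vp-for the generalized sing} with $p=s'(s)+q$ then gives
\[
P\bigl(\log\psi^{s'(s)+q}(\A)\bigr)-\langle q,\vec\alpha\rangle\ \ge\ h_{\mathrm{top}}(E(\vec\alpha))-\eta+\langle s'(s),\vec\alpha\rangle
\]
for all $q$. Letting $\eta\to0$ removes $q$ entirely:
\[
\inf_q\Bigl\{P\bigl(\log\psi^{s'(s)+q}(\A)\bigr)-\langle q,\vec\alpha\rangle\Bigr\}\ \ge\ h_{\mathrm{top}}(E(\vec\alpha))+\langle s'(s),\vec\alpha\rangle .
\]

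It remains to check the elementary fact that $h+\langle s'(s),\vec\alpha\rangle\ge0$ whenever $s\le D$, with $h=h_{\mathrm{top}}(E(\vec\alpha))$. Take $k=\lfloor s\rfloor+1\le d$ (the case $s\ge d$ is handled similarly, using the $k=d$ term and $\alpha_i<0$). Then
\[
\langle s'(s),\vec\alpha\rangle=\sum_{i\le k-1}\alpha_i+(s-k+1)\alpha_k .
\]
Since $\alpha_k<0$, the inequality $s\le D\le k-1+(h-\sum_{i<k}\alpha_i)/\alpha_k$ multiplies out to $(s-k+1)\alpha_k\ge h-\sum_{i<k}\alpha_i$. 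This has the wrong direction, so the sign conventions must be rechecked. The issue is that the Lyapunov dimension formula is written with $\chi_k$ denoting the \emph{absolute} values of the (negative) exponents. I will therefore interpret the $\alpha_i$ there as $-\alpha_i>0$, consistent with the definition of $\dim_{\mathrm L}$. With that reading the inequality becomes $h+\langle s'(s),\vec\alpha\rangle\ge0$, as required. Taking the supremum over $s<D$ gives the claim.

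The main obstacle is precisely this sign/normalisation bookkeeping, together with justifying \eqref{restricted} for the level set. Since \eqref{restricted} is imported from \cite{Moh23} for interior $\vec\alpha$, I expect the substantive analytic content to be just the variational principle step. The rest should be routine once conventions are fixed consistently with Theorem~\ref{thm:usefulequations}.
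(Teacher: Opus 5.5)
Your argument is correct for $\vec\alpha\in\mathring{\vec L}$, which is all the paper needs. However, you reach the key inequality
\[
\inf_{q\in\R^d}\Bigl\{P\bigl(\log\psi^{s'(s)+q}(\A)\bigr)-\langle q,\vec\alpha\rangle\Bigr\}\ \ge\ h_{\mathrm{top}}(E(\vec\alpha))+\langle s'(s),\vec\alpha\rangle
\]
by a different route from the paper.

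The paper uses neither \eqref{restricted} nor any variational principle. It bounds the partition function from below by the partial sum over $\Sigma_n(\vec\alpha,\varepsilon)=\{I\in\Sigma_n:\|\frac1n\Psi(\A_I)-\vec\alpha\|<\varepsilon\}$. On these words $\log\psi^{s'(s)+q}(\A_I)=n\langle s'(s)+q,\vec\alpha\rangle+O\bigl(n(|s'(s)|+|q|)\varepsilon\bigr)$. It then invokes the covering definition of Bowen entropy: $E(\vec\alpha)$ is covered by the sets $\bigcap_{n\ge M}\bigcup_{I\in\Sigma_n(\vec\alpha,\varepsilon)}[I]$, and countable stability gives $\liminf_n\frac1n\log\#\Sigma_n(\vec\alpha,\varepsilon)\ge h_{\mathrm{top}}(E(\vec\alpha))$.

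You instead take a near-maximising $\mu$ from the conditional variational principle \eqref{restricted} of \cite{Moh23} and insert it into Theorem~\ref{vp-for the generalized sing}. Only the trivial half $P\ge h_\mu+\langle p,\vec\chi(\mu)\rangle$ of that theorem is used, and it follows from the Gibbs inequality for any tuple. So the real input in your proof is the imported bound $h_{\mathrm{top}}(E(\vec\alpha))\le\sup\{h_\mu:\vec\chi(\mu)=\vec\alpha\}$, which is available only on the interior.

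The paper's counting argument is therefore more elementary and more general: it works for every $\vec\alpha$ and uses typicality only to know that $P$ is a limit. Your version is shorter once \eqref{restricted} is granted.

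Your sign bookkeeping is cleaner than the paper's displayed chain. That chain drops the $\langle q,\vec\alpha\rangle$ term and writes $-\langle s'(s),\vec\alpha\rangle$, implicitly adopting the absolute-value convention you make explicit. Your closing step is right, and the case $s\ge d$ does go through using $|\alpha_d|\ge\frac1d\sum_i|\alpha_i|$. One small correction: under the literal signed reading the left-hand side is negative, so the lemma would be vacuous rather than ``wrong-direction''. The $|\alpha_i|$ reading is simply the only meaningful one.
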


\begin{proof}
First, we note that the limit in the definition of the topological pressure exists by Theorem~\ref{vp-for the generalized sing}. Write
$$
\Sigma_n(\vec{\alpha}, \varepsilon)=\left\{i \in \Sigma_{n} :  \left\|\frac{1}{n}\Psi(\A_{i})-\vec{\alpha}\right\|<\ep\right\}.
$$

Observe that

$$
\begin{aligned}
P\left(\log \psi^{s^{\prime}(s)-q}(\A)\right) & \geq \liminf _{n \rightarrow \infty} \frac{1}{n} \log \sum_{\mathrm{i} \in \Sigma_n(\vec{\alpha}, \varepsilon)} \psi^{s^{\prime}(s)+q}(\A_i) \\
& \geq-\left\langle s^{\prime}(s), \vec{\alpha}\right\rangle-C\left(\left|s^{\prime}(s)\right|+|q|\right) \varepsilon+\liminf _{n \rightarrow \infty} \frac{1}{n} \log \# \Sigma_n(\vec{\alpha}, \varepsilon)
\end{aligned}
$$
for all $s'(s)$, $q \in \R^d$. By the definition of the topological entropy (see \cite{Bowen}), and since $\varepsilon>0$ is arbitrary, it follows that

$$
\inf _{q \in \mathbb{R}^d} P\left(\log \psi^{s^{\prime}(s)-q}(\A)\right) \geq  h_{\text{top}} \Big(\sigma, E(\vec \alpha)\Big)-\left\langle s^{\prime}(s), \vec{\alpha}\right\rangle.
$$

Consequently,
\[
\begin{aligned}
&\min_{k \in{1, \ldots, d}}
\left\{
k-1+
\frac{h_{\text{top}}(E(\vec{\alpha}))
-\sum_{i=1}^{k-1} \alpha_i}{\alpha_k}
\right\} \\
&\qquad \leq
\sup \left\{
s \geq 0:
\inf _{q \in \mathbb{R}^3}
\left\{
P\left(\log \psi^{s^{\prime}(s)-q}(\A)\right)
-\langle q, \vec{\alpha}\rangle
\right\}
\geq 0
\right\}.
\end{aligned}
\]
\end{proof}
\begin{proof}[Proof of Theorem \ref{t:main-result1}]

By Theorem \ref{main result for dominated case}, we have

$$
\begin{aligned}
&\dim_{\mathrm{H}}\left(\pi E^{n, \mathcal{D}}(n\vec{\alpha}) \right) \\
&\qquad\qquad=\sup\left \{s \geq 0: \inf _{q \in \mathbb{R}^{3}}\left\{P\left(\log \psi^{s'(s)+q}(\B)\right)- \langle q ,n\vec{\alpha} \rangle \right\} \geq 0\right\}\\
&\qquad\qquad=\sup \left\{\operatorname{dim}_{\mathrm{L}}\mu : \mu \in \M((\Sigma_{n}^{\D})^{\N}, f) \text { and } \vec{\chi}(\mu, \B)=n\vec{\alpha}\right\} \\
&\qquad\qquad=\sup \left\{\operatorname{dim}_{\mathrm{L}}\mu : \mu \text { is a Bernoulli measure on }(\Sigma_{n}^{\D})^{\N} \text { and } \vec{\chi}(\mu)=n\vec{\alpha}\right\} \\
&\qquad\qquad=\min _{k \in\{1, 2, 3\}}\left\{k-1+\frac{h_{\text{top}}(E (f, E^{n, \mathcal{D}}( n \vec{\alpha})) -n\sum_{i=1}^{k-1}\alpha_i}{n\alpha_k}\right\}.\\
\end{aligned}
$$
Using Lemma \ref{lem: the relation between topological entropy} and Lemma~\ref{lem:VP bound}, we have
$$\begin{aligned}
\lim_{n\to\infty}\dim_{\mathrm{H}}\left(\pi E^{n, \mathcal{D}}(n\vec{\alpha}) \right)
&\leq 
 \min_{k\in\{1,2,3\}}
 \left\{
 k-1+
 \frac{
 h_{\mathrm{top}}\!\left(\sigma,E(\vec\alpha)\right)
 -\sum_{i=1}^{k-1}\alpha_i
 }{\alpha_k}
 \right\}\\
 &\leq\sup\left \{s \geq 0: \inf _{q \in \mathbb{R}^{d}}\left\{P\left(\log \psi^{s'(s)+q}(\A)\right)- \langle q ,\vec{\alpha} \rangle \right\} \geq 0\right\}.
\end{aligned}$$
On the other hand, Theorem \ref{continuity_potential} gives
$$
\begin{aligned}
&\lim_{n\to\infty}\dim_{\mathrm{H}}\left(\pi E^{n, \mathcal{D}}(n\vec{\alpha}) \right)\\
&\qquad\qquad=\lim_{n\to\infty}\sup\left \{s \geq 0: \inf _{q \in \mathbb{R}^{d}}\left\{P\left(\log \psi^{s'(s)+q}(\B)\right)- \langle q ,n\vec{\alpha} \rangle \right\} \geq 0\right\}\\
&\qquad\qquad=\sup\left \{s \geq 0: \inf _{q \in \mathbb{R}^{d}}\left\{P\left(\log \psi^{s'(s)+q}(\A)\right)- \langle q ,\vec{\alpha} \rangle \right\} \geq 0\right\}.
\end{aligned}
$$
Finally, by Theorem~\ref{HD-one side}, we have
$$
\begin{aligned}
&\sup\left \{s \geq 0: \inf _{q \in \mathbb{R}^{d}}\left\{P\left(\log \psi^{s'(s)+q}(\A)\right)- \langle q ,\vec{\alpha} \rangle \right\} \geq 0\right\}\\
&\qquad=\lim_{n\to\infty}\dim_{\mathrm{H}}\left(\pi E^{n, \mathcal{D}}(n\vec{\alpha}) \right)\\
&\qquad\leq\dim_{\mathrm{H}}\pi E(\vec{\alpha})\\
&\qquad\leq\sup\left \{s \geq 0: \inf _{q \in \mathbb{R}^{d}}\left\{P\left(\log \psi^{s'(s)+q}(\A)\right)- \langle q ,\vec{\alpha} \rangle \right\} \geq 0\right\}.
\end{aligned}
$$
Finally, for any $\mu' \in \M((\Sigma_{n}^{\D})^{\N} , f)$, Proposition \ref{relation between entropies and LE} provides a measure $\mu \in \M(\Si, \sigma)$ such that $\diml \mu=\diml \mu'$, which completes the proof. 
\end{proof}

\section{Simultaneous Transversality}\label{sec: simultaneous-transversality}

In this section, we use the transversality method to tackle higher dimensions instead of Rapaport's result \cite{Rapaport2024SelfAffine}, which is well-suited up to dimension three. First, let us introduce the transversality condition. Consider smooth functions $t_i\colon\R^M\to\R^d$ for $i=1,\ldots,N$, and a parametrised family of affine IFSs $\Theta_v=\left(A_1 + t_1(v), \ldots, A_N + t_N(v)\right)$. We say that $\Theta_v$ satisfies the \emph{transversality condition} if for every open and bounded set $U\subset\R^M$ there exists $C>0$ such that for every $\mathrm i\neq\mathrm j\in\Sigma$
\begin{equation}\label{eq:transineq}
\L\left(\left\{v\in U:\|\pi_v(\mathrm i)-\pi_v(\mathrm j)\|<r\right\}\right)\leq C\prod_{k=1}^d\min\left\{1,\frac{r}{\alpha_k(\A_{\mathrm i\wedge\mathrm j})}\right\},
\end{equation}
where $\mathcal{L}$ denotes the Lebesgue measure of $\R^M$. The main statement of this section is the following.

\begin{thm}\label{thm:everyergodic}
   Let $\Theta_v=\left(A_1 + t_1(v), \ldots, A_N + t_N(v)\right)$ be a parametrised family of affine IFS with canonical projection $\pi_v$ such that it satisfies \eqref{eq:transineq}. Then, for Lebesgue-almost all $v$
    $$
    \dim_{\mathrm{H}}(\pi_v)_*\mu=\min\{d,\dim_{\mathrm{L}}\mu\}\text{  for every $\mu\in\Ek_\sigma(\Sigma)$}.
    $$
\end{thm}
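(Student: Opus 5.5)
The upper bound $\dim_{\mathrm H}(\pi_v)_*\mu\le\min\{d,\dim_{\mathrm L}\mu\}$ holds for \emph{every} $v$ and every ergodic $\mu$, by the standard covering argument. Fix $k$ with $\dim_{\mathrm L}\mu\in[k-1,k]$. By Shannon--McMillan--Breiman and Kingman's theorem, for $\mu$-a.e.\ $\mathrm i$ and large $n$, $\mu([\mathrm i|_n])\approx e^{-nh_\mu}$ and $\alpha_j(\A_{\mathrm i|_n})\approx e^{n\chi_j(\mu)}$. The ellipsoid $\A_{\mathrm i|_n}(B)$ is covered by roughly $\prod_{j<k}\alpha_j/\alpha_k$ balls of radius $\alpha_k$. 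Counting these on a set of large measure gives local dimension at most $k-1+(h_\mu-\sum_{j<k}\chi_j)/\chi_k$ for a.e.\ point, with the minimum over $k$ taken automatically. So the whole content is the almost sure lower bound, holding \emph{simultaneously} for all ergodic $\mu$.

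For the lower bound, the usual Jordan--Pollicott--Simon energy argument gives, for a \emph{fixed} $\mu$, a null set of bad $v$. There are uncountably many ergodic measures, so instead I will prove a uniform statement over the shift that does not refer to $\mu$. For $t<d$ and a finite word-length profile, consider
\[
\mathcal E_t(v)=\iint \phi^t(\mathrm i,\mathrm j)\,\|\pi_v\mathrm i-\pi_v\mathrm j\|^{-t}\,\ldots
\]
Integrating the pairwise energy kernel over $v\in U$ using \eqref{eq:transineq} gives
\[
\int_U\|\pi_v(\mathrm i)-\pi_v(\mathrm j)\|^{-t}\,dv\le C\,\varphi^t(\A_{\mathrm i\wedge\mathrm j})^{-1},
\]
the standard computation obtained by integrating the product of minima via the layer-cake formula, valid for non-integer $t<d$. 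The plan is to show that for a.e.\ $v$ this inequality holds \emph{pointwise along cylinders} in a Borel--Cantelli sense. For each $n$ and each pair of distinct $n$-cylinders, or better for each $\mathrm k\in\Sigma_n$ and pairs of children of $\mathrm k$, bound the Lebesgue measure of parameters for which
\[
\|\pi_v(\mathrm i)-\pi_v(\mathrm j)\|<e^{-\epsilon n}\,\varphi^{t}\text{-scale of }\A_{\mathrm k}
\]
for some $\mathrm i,\mathrm j$ through those cylinders. Then choose a countable dense set of rational $t$ and $\epsilon$.

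The concrete route I would take is this. For a.e.\ $v$ and every ergodic $\mu$, the quantity $\int\!\!\int\|\pi_v\mathrm i-\pi_v\mathrm j\|^{-t}\,d\mu_G\,d\mu_G$ is finite, where $\mu_G$ is $\mu$ restricted to a "good" set $G$ on which the cylinder measures and singular values obey the exponential bounds with error $\epsilon$ for all $n\ge n_0$. I decompose by the length of the common prefix, $\mathrm i\wedge\mathrm j=\mathrm k$ with $|\mathrm k|=n$. Then it suffices to have, for a.e.\ $v$ and all $n$ large, the bound
\[
\sum_{\mathrm k\in\Sigma_n}\mu([\mathrm k])^2\,\mathbb E(\cdot)\lesssim e^{-n(2h_\mu-\ldots)}\,\#\{\mathrm k\}\,\varphi^t(\A_{\mathrm k})^{-1}e^{\epsilon n}.
\]
This would follow from a $\mu$-free statement: the \emph{normalised} kernel
\[
F_n(v)=\sum_{\mathrm k\in\Sigma_n}\varphi^t(\A_{\mathrm k})\,\sup_{\mathrm i,\mathrm j\supset\mathrm k,\ \mathrm i\wedge\mathrm j=\mathrm k}\ldots
\]
is controlled. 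Since a supremum over continua is not directly integrable, I would replace points by a finite $e^{-Ln}$-net of $m$-level cylinders, with $m=Ln$ for suitably large $L$. For points in cylinders of level $m$, the map $\pi_v$ moves by at most $\max\|\A\|^{m}$, which is much smaller than the relevant scale $\alpha_d(\A_{\mathrm k})e^{-\epsilon n}$ because $\alpha_d(\A_{\mathrm k})\ge c^n$. For a finite family, Markov's inequality on $\int_U$ plus the transversality bound give measure $\le N^{2m}e^{-\epsilon' n}$ summed appropriately. Picking the exponent in the Markov threshold to beat $N^{2Ln}$ is possible, because the transversality estimate can be applied with $t$ replaced by slightly smaller $t'$ to gain a factor $e^{-cn}$. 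Borel--Cantelli then yields a full-measure set of $v$ on which, for all $n\ge n_0(v)$ and all pairs of points with common prefix of length $n$,
\[
\|\pi_v\mathrm i-\pi_v\mathrm j\|\ge e^{-\epsilon n}\,\rho_t(\A_{\mathrm i\wedge\mathrm j}),
\]
where $\rho_t$ is the natural "$t$-radius" $\varphi^t(\cdot)^{1/t}$-type quantity. Here I expect a problem: a pointwise separation bound is stronger than what transversality gives. What is realistic is a bound on the \emph{count} of close pairs. So I would instead prove that the number of pairs of $m$-cylinders, with common prefix $\mathrm k$, at distance $<r$ is at most $e^{\epsilon n}$ times its expectation, which again follows by Markov and Borel--Cantelli over countably many $(n,t,\epsilon,r\text{-dyadic})$.

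With such a $v$ fixed, take any ergodic $\mu$ with $t<\dim_{\mathrm L}\mu$ and restrict to the good set $G$ from Egorov's theorem. Then estimate the $t$-energy of $(\pi_v)_*\mu|_G$ by splitting over $\mathrm k$ and dyadic distances, bounding $\mu\times\mu$ of close pairs by $(\text{count of close }m\text{-cylinder pairs})\times\max\mu(\text{cylinder})^2$. The resulting series is $\sum_n e^{n(h_\mu+\epsilon)}\cdot\varphi^t$-terms, which converges exactly when $t<\dim_{\mathrm L}\mu-O(\epsilon)$. This is the step where the definition of $\dim_{\mathrm L}$ via the minimum over $k$ enters. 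Finite energy gives $\dim_{\mathrm H}(\pi_v)_*\mu\ge t$; letting $t,\epsilon$ run through rationals finishes the proof. The main obstacle is the preceding step: making the count estimate uniform over all cylinders and scales with only an $e^{\epsilon n}$ loss. This requires the entropy of the $m$-level net not to swamp the gain, which I would handle by comparing against the measure-free bound $\#\Sigma_m$ only within a fixed $\mathrm k$ and choosing $m-n$ of order $\epsilon n$.
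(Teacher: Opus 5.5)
Your upper bound and the fixed-$\mu$ computation $\int_U\|\pi_v(\mathrm i)-\pi_v(\mathrm j)\|^{-t}\,dv\le C\varphi^t(\A_{\mathrm i\wedge\mathrm j})^{-1}$ are fine. The gap is the step that carries the whole theorem: an exceptional set of parameters that does not depend on $\mu$. You leave it open, and the route you sketch does not work.

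Transversality only controls first moments, so a per-word count bound has to come from Markov's inequality. For each $\mathrm k\in\Sigma_n$, the set of $v$ where the count exceeds $e^{\epsilon n}$ times its mean has measure of order $e^{-\epsilon n}$. There are $N^n$ such words, so the union bound diverges. Replacing $t$ by $t'<t$ gains only a factor $e^{-O((t-t')n)}$. That beats $N^n$ (let alone $N^{2Ln}$) only if $t-t'$ is bounded below, i.e.\ at the price of a fixed amount of dimension.

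Aggregating the counts over all $\mathrm k\in\Sigma_n$, or over all words with prescribed singular-value growth, rescues Borel--Cantelli but not the transfer to a given $\mu$. The $\mu$-typical words are about $e^{nh_\mu}$ of a class of size $e^{nH}$ with $H\ge h_\mu$, and you lose $e^{n(H-h_\mu)}$.

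Independently, you bound $\mu\times\mu$ of close pairs by the number of close pairs of $m$-level children of $\mathrm k$ times $\max\mu(\text{cylinder})^2$. This puts uniform weights on $N^{2(m-n)}$ pairs and costs $(Ne^{-h_\mu})^{2(m-n)}$. That is an $e^{O(\epsilon n)}$ loss only if $m-n=O(\epsilon n)$. This is incompatible with your own resolution requirement $\lambda^m\ll c^n$, $\lambda=\max_i\|A_i\|$, which forces $m-n$ of order $n\log(\lambda/c)/\log(1/\lambda)$ for non-conformal systems. Moreover, without separation, pairs with common prefix $\mathrm k$ occur at distances far below $\alpha_d(\A_{\mathrm k})$, and resolving those forces $m-n\to\infty$. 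So your two choices $m=Ln$ and $m-n\approx\epsilon n$ cannot both be made.

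The missing idea is the relative dimension separability of $\Ek_\sigma(\Sigma)$ with respect to the metrics $\rho_k(\mathrm i,\mathrm j)=\alpha_k(\A_{\mathrm i\wedge\mathrm j})$ (Proposition~\ref{prop: general ergodic rds}). There is a countable $\mathcal V\subset\Ek_\sigma(\Sigma)$ such that every ergodic $\mu$ admits some $\nu\in\mathcal V$ with $\alpha_k(\A_{\mathrm i|_n})^{\varepsilon}\le\mu([\mathrm i|_n])/\nu([\mathrm i|_n])\le\alpha_k(\A_{\mathrm i|_n})^{-\varepsilon}$ for all $n\ge M$ on a set $B$ with $\mu(B)>1-\varepsilon$.

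With this, Proposition~\ref{prop:energyintegral} bounds the supremum over the whole family $V(\nu,\varepsilon,M,t)$ of the energy of $\mu|_B$ by a single $\mu$-free expression in $\nu$; the indicator of $B$ is replaced by that of the $\mu$-free set $E$. That expression is integrable in $v\in U$ by transversality, and the exceptional set is a countable union over $(\nu,\varepsilon,t,M)$.

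Note also the order of operations. The paper integrates in $v$ \emph{before} summing over the children $\mathrm k\mathrm i,\mathrm k\mathrm j$. So the transversality bound depends only on $\mathrm k$, and $\sum_{\mathrm i,\mathrm j}\nu([\mathrm k\mathrm i])\nu([\mathrm k\mathrm j])=\nu([\mathrm k])^2$ absorbs the $m(r)$-level net with no entropy cost. Fixing $v$ first and only afterwards bringing in $\mu$, as you propose, is precisely what forces the lossy uniform weights.
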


To prove Theorem~\ref{thm:everyergodic}, we follow the method of \cite{BSS25}. Let us first introduce a family of ultrametrics generating the Borel $\sigma$-algebra generated by cylinder sets. We say that the metric $\rho$ on $\Sigma$ is an \emph{exponential metric} if
\begin{enumerate}[(i)]
	\item\label{it: rho to psi} there exists a function $\psi : \Sigma_* \to (0,\infty)$ so that $\rho(\mathrm{i}, \mathrm{j}) = \begin{cases}\psi(\mathrm{i} \wedge \mathrm{j})&\text{ if }\mathrm{i} \neq \mathrm{j} \in \Sigma,\\0&\text{otherwise};\end{cases}$
	\item\label{it: psi mono} there exists $\gamma \in (0,1)$ such that $\psi(\mathrm{i}|_{n+1}) \leq \gamma \psi(\mathrm{i}|_{n})$ for each $n \geq 1$ and $\mathrm{i} \in \Sigma$.
\end{enumerate}
The metric 
\begin{equation}\label{eq:rhok}
\rho_k(\mathrm{i},\mathrm j):=\alpha_k(\A_{\mathrm i\wedge\mathrm j})
\end{equation}
serves as a natural example of an exponential metric in our setting for every $k=1,\ldots,d$. Now, we introduce the concept of relative entropy from \cite{BSS25}. Let $\mu$ and $\nu$ be finite Borel measures on the metric space $(\Sigma,\rho)$, where $\rho$ is an exponential ultrametric. The \emph{relative dimension} of $\mu$ with respect to $\nu$ is
\[
    \dim(\mu || \nu, \rho) := \inf \left\{ \eps > 0 : - \eps < \underset{\mathrm{i} \sim \mu}{\essinf} \liminf \limits_{r \to 0} \frac{\log \frac{\mu(B_\rho(\mathrm{i},r))}{\nu(B_\rho(\mathrm{i},r))}}{\log r} \leq  \underset{\mathrm{i} \sim \mu}{\esssup} \limsup \limits_{r \to 0} \frac{\log \frac{\mu(B_\rho(\mathrm{i},r))}{\nu(B_\rho(\mathrm{i},r))}}{\log r} < \eps  \right\},
\]
where $B_\rho(\mathrm i, r)$ denotes the open $r$-ball centred at $\mathrm i$ in metric $\rho$. We adopt the convention that $\log \frac{\mu(B_\rho(\mathrm i,r))}{0} = + \infty$ if $\mu(B_\rho(\mathrm i,r)) > 0$. In particular $\dim(\mu || \nu, \rho) = \infty$ if $\mu(\Sigma \setminus \supp(\nu)) > 0$. 

In the case of metric $\rho_k$ defined in \eqref{eq:rhok}, the relative dimension can be written as 
\small{\begin{equation}\label{eq:reldim}
    \dim(\mu || \nu, \rho)=\inf \left\{ \eps > 0 : - \eps < \underset{\mathrm{i} \sim \mu}{\essinf} \liminf \limits_{n\to\infty} \dfrac{\log \frac{\mu([\mathrm{i}|_n])}{\nu([\mathrm{i}|_n])}}{\log \alpha_k(\A_{\mathrm{i}|_n})} \leq  \underset{\mathrm{i} \sim \mu}{\esssup} \limsup \limits_{n\to\infty} \dfrac{\log \frac{\mu([\mathrm{i}|_n])}{\nu([\mathrm{i}|_n])}}{\log \alpha_k(\A_{\mathrm{i}|_n})} < \eps \right\},
\end{equation}}
Note that the relative dimension is not a metric on the space of measures, since it is not symmetric; however, we might think of it as a semi-metric. So, we introduce the notation:
$$
\widetilde{B}_\rho(\nu,\delta):=\{\mu\in\Ek_\sigma(\Sigma):\dim(\mu || \nu,\rho)<\delta\}.
$$
In the special case of metric $\rho_k$, we will denote this ball by $\widetilde{B}_k$ for simplicity.

We say that a set $\mathcal P \subset \Mk(\Sigma)$ of Borel probability measures is \emph{relative dimension separable} with respect to the metric $\rho$ if there exists a countable set $\Vk \subset \mathcal P$ such that for every $\mu \in \mathcal P$ and $\eps > 0$ there exists $\nu \in \Vk$ with $\dim(\mu || \nu, \rho) < \eps$.

\begin{prop}\label{prop: general ergodic rds}
Let $\rho$ be a metric on $\Sigma$ satisfying \ref{it: rho to psi} and \ref{it: psi mono}. Then the set $\Ek_\sigma(\Sigma)$ of ergodic shift-invariant probability measures on $\Sigma$ is relative dimension separable with respect to $\rho$.
\end{prop}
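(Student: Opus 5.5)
The plan is to take for $\Vk$ the countable family of ergodic finite-step Markov measures with rational transition probabilities. The key observation is that in an exponential metric the denominator in the definition of relative dimension grows at least linearly in $n$.

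\emph{Reduction to cylinders.} Since $\rho$ is an ultrametric given by $\psi(\mathrm i\wedge\mathrm j)$, every ball $B_\rho(\mathrm i,r)$ is a cylinder $[\mathrm i|_n]$. Property \ref{it: psi mono} gives $\psi(\mathrm i|_n)\le \gamma^{n-1}\psi(\mathrm i|_1)\le C\gamma^n$. Hence $|\log\psi(\mathrm i|_n)|\ge c\,n$ for some $c>0$ and all large $n$, uniformly in $\mathrm i$. As in \eqref{eq:reldim}, it therefore suffices to find, for each $\mu\in\Ek_\sigma(\Sigma)$ and $\eps>0$, some $\nu\in\Vk$ such that for $\mu$-a.e.\ $\mathrm i$
$$
\limsup_{n\to\infty}\frac1n\Bigl|\log\mu([\mathrm i|_n])-\log\nu([\mathrm i|_n])\Bigr|\le c\,\eps/2 .
$$
It is enough to show that $\frac1n\log\frac{\mu([\mathrm i|_n])}{\nu([\mathrm i|_n])}$ converges $\mu$-a.s.\ to a constant of absolute value at most $c\eps/2$.

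\emph{Markov approximation.} Fix $k$ and let $\mu_k$ be the $k$-step Markov measure with transition probabilities $p(b\mid a)=\mu([ab])/\mu([a])$ for $a\in\Sigma_k$ with $\mu([a])>0$. By Shannon--McMillan--Breiman, $-\frac1n\log\mu([\mathrm i|_n])\to h_\mu(\sigma)$ $\mu$-a.s. For any $k$-step Markov $\nu$ with transitions $\tilde p$ we have $\log\nu([\mathrm i|_n])=\sum_{j}\log\tilde p(i_{j+k+1}\mid i_{j+1}\cdots i_{j+k})+O(1)$, provided $\tilde p>0$ wherever $\mu$ charges. By Birkhoff's theorem for the ergodic $\mu$,
$$
-\frac1n\log\nu([\mathrm i|_n])\to-\sum_{a\in\Sigma_k,\,b}\mu([ab])\log\tilde p(b\mid a)\quad\mu\text{-a.s.}
$$
For $\tilde p=p$ this limit is the conditional entropy $H_{k+1}(\mu)-H_k(\mu)$, which decreases to $h_\mu(\sigma)$ as $k\to\infty$. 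So first choose $k$ with $H_{k+1}-H_k-h_\mu<c\eps/4$. Then choose rational $\tilde p$ with exactly the same zero pattern as $p$ and close enough to $p$ that the cross-entropy changes by less than $c\eps/4$. This is possible by continuity of $t\mapsto\log t$ on the finitely many positive entries. The resulting limit of $\frac1n\log\frac{\mu([\mathrm i|_n])}{\nu([\mathrm i|_n])}$ then lies in $[0,c\eps/2)$, up to the sign convention.

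\emph{Ergodicity of $\nu$ and countability.} This is the step I expect to need the most care, because $\Vk$ must lie inside $\Ek_\sigma(\Sigma)$, so $\nu$ must be invariant and ergodic. Consider the directed graph on $\{a\in\Sigma_k:\mu([a])>0\}$ with an edge $a\to\sigma(ab)|_k$ whenever $\mu([ab])>0$. By ergodicity of $\mu$ and Birkhoff's theorem, $\mu$-a.e.\ orbit visits every positive-measure $k$-block. Hence this graph is strongly connected, i.e.\ the transition matrix $\tilde p$, which has the same zero pattern, is irreducible. It therefore has a unique stationary vector, which has rational entries since it solves a rational linear system. Taking $\nu$ to be the stationary Markov measure, $\nu$ is $\sigma$-invariant and ergodic, and it charges every cylinder that $\mu$ charges, so no division by zero occurs. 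The family of such $\nu$ over all $k$ and all rational irreducible transition matrices is countable. This gives relative dimension separability of $\Ek_\sigma(\Sigma)$ with respect to $\rho$.
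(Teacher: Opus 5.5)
Your argument is correct and complete. The paper does not prove this proposition itself; it only cites \cite[Proposition~7.10]{BSS25}. Your proposal is therefore a self-contained replacement for that citation. It has three steps:
\begin{enumerate}
\item approximate an ergodic $\mu$ by stationary irreducible $k$-step Markov measures with rational transitions, keeping the same zero pattern as the true conditional probabilities;
\item identify the $\mu$-a.s.\ limit of $\frac1n\log\frac{\mu([\mathrm i|_n])}{\nu([\mathrm i|_n])}$, via Shannon--McMillan--Breiman and Birkhoff, as a cross-entropy minus $h_\mu(\sigma)$;
\item use (ii) only to ensure that $|\log r|$ grows at least linearly in $n$ on the range of radii where $B_\rho(\mathrm i,r)=[\mathrm i|_n]$.
\end{enumerate}

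Your route makes clear which hypothesis actually matters: only the exponential upper bound $\psi(\mathrm i|_n)\le C\gamma^n$ is used. It also shows that your family $\mathcal V$ does not depend on $\rho$. So a single countable family is relative-dimension dense for every exponential metric at once, in particular for all $\rho_k$, $k=1,\dots,d$. This is convenient in the proof of Theorem~\ref{thm:everyergodic}, where the relevant $k$ depends on $\mu$; the citation alone would require taking a union of $d$ families.

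Two cosmetic remarks:
\begin{itemize}
\item Balls are cylinders only for small $r$, since (ii) does not compare $\psi(\varnothing)$ with $\psi(\mathrm i|_1)$. This is harmless, because only $r\to0$ matters.
\item Rationality of the stationary vector is not needed for countability. The stationary vector is uniquely determined by the rational irreducible matrix $\tilde p$.
\end{itemize}
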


\begin{proof}The proof can be found in \cite[Proposition~7.10]{BSS25}.\end{proof}

  Fix $\varepsilon>0$ and $\nu\in\Mk(\Sigma)$. For $\mu\in \widetilde{B}_k(\nu,\varepsilon)$ with $k-1<\dim_{\mathrm{L}}\mu\leq k$ and $M\in\N$, let 
    \begin{equation}\label{eq:defC}
        C_\mu(M,k,\nu)=\left\{\mathrm i:\alpha_k(\A_{\mathrm i|_n})^{\varepsilon}\leq\frac{\mu([\mathrm i|_n])}{\nu([\mathrm i|_n])}\leq\alpha_k(\A_{\mathrm i|_n})^{-\varepsilon}\text{ for every }n\geq M\right\},
    \end{equation}
    and
    \begin{equation}\label{eq:defD}
        D_\mu(M,k)=\left\{\mathrm i:\alpha_{k}(\A_{\mathrm i|_n})^{\dim_{\mathrm{L}}\mu+\varepsilon}\leq\frac{\mu([\mathrm i|_n])}{\varphi^{{k}-1}(\A_{\mathrm i|_n})}\leq\alpha_{k}(\A_{\mathrm i|_n})^{\dim_{\mathrm{L}}\mu-\varepsilon}\text{ for every }n\geq M\right\}.
    \end{equation}
    By definition, 
    $$D_\mu(M,k)\subseteq D_\mu(M+1,k)\text{ and } C_\mu(M,k,\nu)\subseteq C_\mu(M+1,k,\nu).$$ Furthermore, $\mu(\bigcup_{M=1}^\infty C_\mu(M,k))=\mu(\bigcup_{M=1}^\infty D_\mu(M,k))=1$. For every $\mu\in \widetilde{B}_k(\nu,\varepsilon)$ with $k-1<\dim_{\mathrm{L}}\mu\leq k$, let $M_\mu(\nu,\varepsilon)\in\N$ be the smallest integer such that 
    $$
    \mu(C_\mu(M_\mu(\nu,\varepsilon),k,\nu)\cap D_\mu(M_\mu(\nu,\varepsilon),k))>1-\varepsilon,
    $$
    and let 
    $$B(\mu,\varepsilon,\nu,k):=C_\mu(M_\mu(\nu,\varepsilon),k,\nu)\cap D_\mu(M_\mu(\nu,\varepsilon),k).$$

For $\nu\in\Ek_\sigma(\Sigma)$, $M\in\N$ and $t,\varepsilon>0$, let 
\begin{equation*}
V(\nu,\varepsilon,M,t):=\left\{\mu\in\widetilde{B}_k(\nu,\varepsilon):\max\{k-1,t-\varepsilon\}<\dim_{\mathrm{L}}\mu\leq\min\{k,t+\varepsilon\}\text{ and } M_\mu(\nu,\varepsilon)\leq M\right\}.
\end{equation*}

\begin{prop}\label{prop:energyintegral}
    Let $\Theta_v=\left(A_1 + t_1(v), \ldots, A_N + t_N(v)\right)$ be a parameterised family of affine IFS with canonical projection $\pi_v$ such that it satisfies \eqref{eq:transineq}. Then there exists $\delta>0$ such that for every open and bounded set $U$, every $\varepsilon>0$, every $t\in\R_+$, every $M\in\N$ and $\nu\in\Ek_\sigma(\Sigma)$,
    \[
    \begin{aligned}
    I &:=\int_U\sup_{\mu\in V(\nu,\varepsilon,M,t)}\iint\|\pi_v(\mathrm i)-\pi_v(\mathrm j)\|^{-(t-\delta\varepsilon)}d\mu(\mathrm i)d\mu|_{B(\mu,\varepsilon,\nu,k)}(\mathrm j)dv<\infty.
    \end{aligned}
    \]
\end{prop}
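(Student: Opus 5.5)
The plan is to remove the supremum over $\mu$ by dominating every $\mu\in V(\nu,\varepsilon,M,t)$ restricted to the good set by a single reference measure. That measure will be $\nu$ up to subexponential factors, which the relative dimension ball $\widetilde B_k(\nu,\varepsilon)$ makes possible.

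\emph{Step 1: rewrite the inner integral.} Since $\mu\in V(\nu,\varepsilon,M,t)$, we have $M_\mu(\nu,\varepsilon)\le M$, and $C_\mu,D_\mu$ are monotone in $M$. Hence $B(\mu,\varepsilon,\nu,k)\subseteq C_\mu(M,k,\nu)\cap D_\mu(M,k)$. For fixed $\mathrm i$, decompose the $\mathrm j$-integral over $n=|\mathrm i\wedge\mathrm j|$, namely over $\mathrm j\in[\mathrm i|_n]\setminus[\mathrm i|_{n+1}]$. The first $M$ levels contribute at most a constant (for instance $\|\pi_v(\mathrm i)-\pi_v(\mathrm j)\|^{-s}$ integrated in $v$ via \eqref{eq:transineq} at fixed level $n<M$ with the bounded quantity $\alpha_k(\A_{\mathrm i|_n})$). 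For $n\ge M$ and $\mathrm j\in B$, the cylinder $[\mathrm i|_n]$ meets $C_\mu(M,k,\nu)\cap D_\mu(M,k)$, so
\[
\mu([\mathrm i|_n])\le \alpha_k(\A_{\mathrm i|_n})^{-\varepsilon}\nu([\mathrm i|_n])\quad\text{and}\quad \mu([\mathrm i|_n])\le \varphi^{k-1}(\A_{\mathrm i|_n})\,\alpha_k(\A_{\mathrm i|_n})^{t-2\varepsilon},
\]
using $\dim_{\mathrm L}\mu>t-\varepsilon$. The trick is to bound $\sup_\mu$ pointwise. Instead of integrating $d\mu(\mathrm i)d\mu(\mathrm j)$, bound the double integral by the sum over cylinders $\mathrm k\in\Sigma_*$, $|\mathrm k|\ge M$, of $\mu\times\mu|_B(\{(\mathrm i,\mathrm j):\mathrm i\wedge\mathrm j=\mathrm k\})$ times $\sup\|\pi_v(\mathrm i)-\pi_v(\mathrm j)\|^{-(t-\delta\varepsilon)}$. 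This does not directly allow moving the sup inside. Instead, use Fatou/monotone convergence and the pointwise estimate for the pair $(\mathrm i,\mathrm j)$ itself: $\mu\times\mu|_B$ of the pair-set for $\mathrm k$ is at most $\mu([\mathrm k])^2$, which we bound by the product of the two displayed inequalities,
\[
\mu([\mathrm k])^2\le \nu([\mathrm k])\,\varphi^{k-1}(\A_{\mathrm k})\,\alpha_k(\A_{\mathrm k})^{t-3\varepsilon}.
\]
The resulting quantity is independent of $\mu$.

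\emph{Step 2: integrate in $v$ using transversality.} For fixed $\mathrm i\ne\mathrm j$ with $\mathrm i\wedge\mathrm j=\mathrm k$, the layer-cake formula and \eqref{eq:transineq} give, for $k-1<s<k$, that $\int_U\|\pi_v(\mathrm i)-\pi_v(\mathrm j)\|^{-s}dv\le C'\varphi^s(\A_{\mathrm k})^{-1}=C'\big(\varphi^{k-1}(\A_{\mathrm k})\alpha_k(\A_{\mathrm k})^{s-k+1}\big)^{-1}$. This is the standard Falconer/Jordan--Pollicott--Simon computation, with the constant uniform for $s$ in compact subintervals of $(k-1,k)$. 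When $t$ lies near an integer, or $t\ge d$, one caps $s$ appropriately; I will treat $\max\{k-1,t-\varepsilon\}<\dim_{\mathrm L}\mu$ by taking $s=t-\delta\varepsilon$ within $(k-1,k)$, with $t-\delta\varepsilon>k-1$ assumed after shrinking.

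\emph{Step 3: sum.} Since the bound from Step 1 does not depend on $\mu$, the supremum can be replaced by it before integrating in $v$. By Tonelli we obtain
\[
I\lesssim C+\sum_{|\mathrm k|\ge M}\nu([\mathrm k])\,\varphi^{k-1}(\A_{\mathrm k})\alpha_k(\A_{\mathrm k})^{t-3\varepsilon}\cdot\varphi^{k-1}(\A_{\mathrm k})^{-1}\alpha_k(\A_{\mathrm k})^{-(t-\delta\varepsilon-k+1)}\cdot(\ldots).
\]
The powers of $\varphi^{k-1}$ need care, and I expect this bookkeeping to be the main obstacle. The cleaner route is to keep one factor $\mu([\mathrm k])\le\varphi^{k-1}\alpha_k^{t-2\varepsilon}$ against the kernel and the other as a mass. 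The point is that $\sum_{\mathrm k}$ over the pair-sets of the mass is not summable uniformly in $\mu$ unless it is replaced by $\nu$. So we write the pair mass as $\le\mu([\mathrm k])\cdot\mu([\mathrm k])\le \big(\alpha_k^{-\varepsilon}\nu([\mathrm k])\big)\big(\varphi^{k-1}\alpha_k^{t-2\varepsilon}\big)$. The term becomes $\nu([\mathrm k])\,\alpha_k(\A_{\mathrm k})^{(\delta-3)\varepsilon+k-1}\cdot\alpha_k^{-(k-1)}$, that is $\nu([\mathrm k])\alpha_k(\A_{\mathrm k})^{(\delta-3)\varepsilon}$. Choosing $\delta>3$ (say $\delta=4$) makes this $\le\nu([\mathrm k])\gamma^{\varepsilon|\mathrm k|}$ with $\gamma=\max\|A_i\|<1$. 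Since $\sum_{|\mathrm k|=n}\nu([\mathrm k])=1$, the series converges geometrically, so $I<\infty$, with $\delta$ independent of all other data.
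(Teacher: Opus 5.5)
\textbf{There is a genuine gap: the supremum over $\mu$ is never legitimately removed.} Your Step~1 bounds only the \emph{mass} of the pair set,
$\mu\times\mu|_B(\mathcal C_{\mathrm k})\le\mu([\mathrm k])\,\mu|_B([\mathrm k])\le\nu([\mathrm k])\varphi^{k-1}(\A_{\mathrm k})\alpha_k(\A_{\mathrm k})^{t-3\varepsilon}$.
That estimate is correct. Splitting the two factors of $\mu([\mathrm k])$ between $\nu$ and $\varphi$ is a clean substitute for the paper's set $E$.

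The difficulty is what sits under $\sup_\mu$ at fixed $v$. It is $\iint_{\mathcal C_{\mathrm k}}\|\pi_v(\mathrm i)-\pi_v(\mathrm j)\|^{-s}\,d\mu(\mathrm i)\,d\mu|_B(\mathrm j)$, and this depends on how $\mu$ distributes its mass inside $[\mathrm k]$, not just on $\mu([\mathrm k])$. The transversality bound $\int_U\|\pi_v(\mathrm i)-\pi_v(\mathrm j)\|^{-s}dv\lesssim\varphi^s(\A_{\mathrm k})^{-1}$ holds for one fixed pair. It can only be used after exchanging $\int_U dv$ with $\iint d\mu\,d\mu$. Tonelli does not allow this while a supremum over the uncountable family $V(\nu,\varepsilon,M,t)$ sits in between. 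Your sentence ``the supremum can be replaced by it before integrating in $v$'' is exactly this illegitimate exchange, and the same objection applies to your treatment of the levels $n<M$.

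The obvious fallbacks also fail:
\begin{itemize}
\item Bounding the kernel by its supremum over $\mathcal C_{\mathrm k}$ gives $+\infty$ at every $v$ for which two distinct first-level cylinders have intersecting projections.
\item You cannot dominate $\mu|_B$ by a multiple of a fixed measure. Distinct ergodic measures are mutually singular, and on $B$ the ratio $\mu([\mathrm i|_n])/\nu([\mathrm i|_n])$ is controlled only by $\alpha_k(\A_{\mathrm i|_n})^{-\varepsilon}$, which grows exponentially in $n$.
\end{itemize}

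\textbf{The missing idea is to compare with $\nu$ at every depth, with the depth tied to the scale.} The paper writes the kernel via the layer-cake formula. At level $r$ it splits $\mathcal C_{\mathrm k}$ into pairs of cylinders $[\mathrm k\mathrm i]\times[\mathrm k\mathrm j]$ with $|\mathrm i|=|\mathrm j|=m(r)$, where $(\max_i\|A_i\|)^{m(r)}\approx r^{-1/s}$. Three replacements then follow:
\begin{itemize}
\item the event $\|\pi_v(\cdot)-\pi_v(\cdot)\|<r^{-1/s}$ is dominated by the $\mu$-independent indicator $\mathbb I(\|\A_{\mathrm k}(\pi_v(\mathrm i)-\pi_v(\mathrm j))\|<3r^{-1/s})$;
\item the masses $\mu([\mathrm k\mathrm i])$ and $\mu([\mathrm k\mathrm j])$ are replaced by $\nu$-masses via \eqref{eq:ub0};
\item the condition $[\mathrm k\mathrm i]\cap B\neq\emptyset$ is replaced by $[\mathrm k\mathrm i]\cap E\neq\emptyset$.
\end{itemize}
Only then has $\mu$ disappeared, so Tonelli and \eqref{eq:transineq} can be applied.

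The price is the factor $\alpha_k(\A_{\mathrm k\mathrm i})^{-\varepsilon}\alpha_k(\A_{\mathrm k\mathrm j})^{-\varepsilon}\le 2^{2\varepsilon}\alpha_k(\A_{\mathrm k})^{-2\varepsilon}r^{2\beta\varepsilon/s}$ from \eqref{eq:needtech1}. It degrades the $r$-integral, which is why the paper's $\delta=(2d+6)\beta$ depends on $\beta=\log\min_i\|A_i^{-1}\|^{-1}/\log\max_i\|A_i\|$. Your $\delta=4$, independent of the contraction rates, is a symptom that this cost was never paid.

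Your final summation is fine once the supremum has been removed correctly. It puts one factor against $\varphi$ and the other against $\nu$, then uses $\sum_{|\mathrm k|=n}\nu([\mathrm k])=1$, which is essentially the paper's endgame.
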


\begin{proof}
For every $\mu\in\widetilde{B}_k(\nu,\varepsilon)$ with $\max\{k-1,t-\varepsilon\}<\dim_{\mathrm{L}}\mu\leq\min\{k,t+\varepsilon\}$ and $M_\mu(\nu,\varepsilon)\leq M$,  we have
\begin{equation}\label{eq:ub0}
\mu([\mathrm i|_n])\leq\nu([\mathrm i|_n])\alpha_k(\A_{\mathrm i|_n})^{-\varepsilon}
\end{equation}
and
\begin{equation}\label{eq:ub}
\nu([\mathrm i|_n])\leq\mu([\mathrm i|_n])\alpha_k(\A_{\mathrm i|_n})^{-\varepsilon}\leq \varphi^{k-1}(\A_{\mathrm i|_n})\alpha_k(\A_{\mathrm i|_n})^{\dim_{\mathrm L}\mu-2\varepsilon}\leq\varphi^{k-1}(\A_{\mathrm i|_n})\alpha_k(\A_{\mathrm i|_n})^{t-3\varepsilon}
\end{equation}
 for every $\mathrm i\in B(\mu,\varepsilon,\nu,k)$ and for every $n\geq M$. Let 
\[
 E:=\{\mathrm i\in\Sigma:\nu([\mathrm i|_n])\leq\varphi^{k-1}(\A_{\mathrm i|_n})\alpha_k(\A_{\mathrm i|_n})^{t-3\varepsilon}=\varphi^{t-3\varepsilon}(\A_{\mathrm i|_n})\text{ for }n\geq M\},
\]
and so $B(\mu,\varepsilon,\nu,k)\subseteq E$.
We claim that $\delta$ can be chosen to be $(2d+6)\beta$, where $\beta=\frac{\log\min_i\|A_i^{-1}\|^{-1}}{\log\max_i\|A_i\|}$. For short, we write $s=t-(2d+6)\beta\varepsilon$, $V=V(\nu,\varepsilon,M,t)$ and $B=B(\mu,\varepsilon,\nu,k)$. Let $m(r):=\min\{m\geq 1:(\max_i\|A_i\|)^m<r^{-1/s}\}$. Then for every $\mathrm k,\mathrm i\in\Sigma_*$ with $|\mathrm i|=m(r)$
\begin{equation}\label{eq:needtech1}
    \alpha_k(\A_{\mathrm k\mathrm i})\geq\alpha_k(\A_{\mathrm k})(\max_i\|A_i\|)^{m(r)\beta}\geq \frac12\alpha_k(\A_{\mathrm k})r^{-\beta/s}.
\end{equation}
For $\mathrm i\in\Sigma_*$, let $\mathcal C_{\mathrm i}=\{(\mathrm j,\mathrm k)\in\Sigma\times\Sigma:\mathrm j\wedge\mathrm k=\mathrm i\}$. Now we have all the ingredients to estimate $I$. 
{\allowdisplaybreaks
\begin{eqnarray*}
   & I&\leq\sum_{\mathrm k\in\Sigma_*}\int_U\sup_{\mu\in V}\iint_{\mathcal C_{\mathrm k}}\|\pi_v(\mathrm i)-\pi_v(\mathrm j)\|^{-s}d\mu|_{B}(\mathrm i)d\mu|_{B}(\mathrm j)dv\\
   & &\leq1+\sum_{\mathrm k\in\Sigma_*}\int_U\sup_{\mu\in V}\int_1^\infty\mu\times\mu|_{B}(\{(\mathrm{i},\mathrm j)\in\C_{\mathrm k}:\|\pi_v(\mathrm i)-\pi_v(\mathrm j)\|<r^{-1/s}\})drdv\\
    & &\leq1+\sum_{\mathrm k\in\Sigma_*}\int_U\sup_{\mu\in V}\int_1^\infty\sum_{\substack{\mathrm j\in\Sigma_{m(r)}\\\mathrm i\in\Sigma_{m(r)}}}\mu([\mathrm k\mathrm j])\mu|_{B}([\mathrm k\mathrm i])\mathbb{I}(\|\A_{\mathrm k}(\pi_v(\mathrm i)-\pi_v(\mathrm j))\|<3r^{-1/s})drdv\\
&     &\leq1+\sum_{\mathrm k\in\Sigma_*}\int_U\sup_{\mu\in V}\int_1^\infty\sum_{\substack{\mathrm j\in\Sigma_{m(r)}\\\mathrm i\in\Sigma_{m(r)}}}\mu([\mathrm k\mathrm j])\mu([\mathrm k\mathrm i])\mathbb{I}([\mathrm k\mathrm i]\cap B\neq\emptyset)
    \\
& & \qquad\qquad\mathbb{I}(\|\A_{\mathrm k}(\pi_v(\mathrm i)-\pi_v(\mathrm j))\|<3r^{-1/s})drdv\\
\mbox{by \eqref{eq:ub0}}\\
&     &\leq C+\sum_{\substack{\mathrm k\in\Sigma_*\\|\mathrm{k}|\geq M}}\int_U\int_1^\infty\sum_{\substack{\mathrm j\in\Sigma_{m(r)}\\\mathrm i\in\Sigma_{m(r)}}}\nu([\mathrm k\mathrm j])\nu([\mathrm k\mathrm i])\alpha_k(\A_{\mathrm k\mathrm i})^{-\varepsilon}\alpha_k(\A_{\mathrm k\mathrm j})^{-\varepsilon}\\
& &\qquad\qquad \mathbb{I}([\mathrm k\mathrm i]\cap E\neq\emptyset)\mathbb{I}(\|\A_{\mathrm k}(\pi_v(\mathrm i)-\pi_v(\mathrm j))\|<3r^{-1/s})drdv\\
&     &=C+\sum_{\substack{\mathrm k\in\Sigma_*\\|\mathrm{k}|\geq M}}\int_1^\infty\sum_{\substack{\mathrm j\in\Sigma_{m(r)}\\\mathrm i\in\Sigma_{m(r)}}}\nu([\mathrm k\mathrm j])\nu([\mathrm k\mathrm i])\alpha_k(\A_{\mathrm k\mathrm i})^{-\varepsilon}\alpha_k(\A_{\mathrm k\mathrm j})^{-\varepsilon}
    \\
& &\qquad\qquad \mathbb{I}([\mathrm k\mathrm i]\cap E\neq\emptyset)\mathcal{L}(v\in U:\|\A_{\mathrm k}(\pi_v(\mathrm i)-\pi_v(\mathrm j))\|<3r^{-1/s})dr\\
\mbox{by \eqref{eq:transineq}}\\
 &     &=C+\sum_{\substack{\mathrm k\in\Sigma_*\\|\mathrm{k}|\geq M}}\int_1^\infty\sum_{\substack{\mathrm j\in\Sigma_{m(r)}\\\mathrm i\in\Sigma_{m(r)}}}\nu([\mathrm k\mathrm j])\nu([\mathrm k\mathrm i])\alpha_k(\A_{\mathrm k\mathrm i})^{-\varepsilon}\alpha_k(\A_{\mathrm k\mathrm j})^{-\varepsilon}
     \\
& &\qquad\qquad
\mathbb{I}([\mathrm k\mathrm i]\cap E\neq\emptyset)\prod_{\ell=1}^d\min\left\{1,\frac{r^{-1/s}}{\alpha_\ell(\A_{\mathrm k})}\right\}dr\\
\mbox{by \eqref{eq:needtech1}}\\
  &    &\leq C+2^{2\varepsilon}\sum_{\substack{\mathrm k\in\Sigma_*\\|\mathrm{k}|\geq M}}\nu([\mathrm k])^2\alpha_k(\A_{\mathrm k})^{-2\varepsilon}\mathbb{I}([\mathrm k]\cap E\neq\emptyset)\int_1^\infty r^{2\varepsilon\beta/s}\prod_{\ell=1}^d\min\left\{1,\frac{r^{-1/s}}{\alpha_\ell(\A_{\mathrm k})}\right\}dr\\
   &   &\leq C+2^{2\varepsilon}\sum_{\substack{\mathrm k\in\Sigma_*\\|\mathrm{k}|\geq M}}\nu([\mathrm k])^2\alpha_k(\A_{\mathrm k})^{-2\varepsilon}\mathbb{I}([\mathrm k]\cap E\neq\emptyset)(\varphi^{s/(1-2\beta\varepsilon)}(\A_{\mathrm k}))^{-1}\\
\mbox{by \eqref{eq:ub}}\\
    &  &\leq C+2^{2\varepsilon}\sum_{\substack{\mathrm k\in\Sigma_*\\|\mathrm k|\geq M}}\nu([\mathrm k])\varphi^{t-5\varepsilon}(\A_{\mathrm k})(\varphi^{s/(1-2\beta\varepsilon)}(\A_{\mathrm k}))^{-1}\\
    &  &\leq C+2^{2\varepsilon}\sum_{n=M}^\infty(\max_i\|A_i\|)^{n(t-5\beta\varepsilon-s/(1-2\beta\varepsilon))}.
\end{eqnarray*}

}
Choosing $s$ such that $(1-2\beta\varepsilon)(t-5\beta\varepsilon)>t-(2t+5)\beta\varepsilon\geq t-(2d+5)\beta\varepsilon>s$, we get that the series is convergent.
\end{proof}

\begin{proof}[Proof of Theorem~\ref{thm:everyergodic}]
    Let $U$ be an arbitrary open and bounded set. Then by Proposition~\ref{prop:energyintegral}, for every $\nu\in\M(\Sigma)$, $\varepsilon>0$, $t\in\R$ and $M\in\N$ there exists $\widetilde{U}_{\nu,\varepsilon,t,M}\subseteq U$ such that $\L(U\setminus \widetilde{U}_{\nu,\varepsilon,t,M})=0$ and for every $\mu\in V(\nu,\varepsilon,M,t)$ and $v\in\widetilde{U}_{\nu,\varepsilon,t,M}$
    \begin{equation}\label{eq:contra}
    \dim_{\mathrm H}(\pi_v)_*\mu\geq\min\{d,\dim_{\mathrm L}\mu\}-(\delta+1)\varepsilon,
    \end{equation}
    where $\delta>0$ is given in Proposition~\ref{prop:energyintegral}. 
    
    Let $\mathcal V$ be a countable, relative dimension-dense subset of $\Ek_\sigma(\Sigma)$ by Proposition~\ref{prop: general ergodic rds}. Hence,
    \begin{equation}\label{eq:coverergodicmeasures}
    \bigcap_{n=1}^\infty \bigcup_{t\in\mathbb Q}\bigcup_{M\in\N}\bigcup_{\nu\in\mathcal V}V(\nu,1/n,t,M)=\Ek_\sigma(\Sigma).
    \end{equation}
    Let
    $$
    \widetilde{U}=\bigcap_{n=1}^\infty\bigcap_{t\in\mathbb Q}\bigcap_{M\in\N}\bigcap_{\nu\in\mathcal V}\widetilde{U}_{\nu,1/n,t,M}.
    $$
    Clearly, $\mathcal{L}(U\setminus\widetilde{U})=0$. We claim that for every $v\in\widetilde{U}$ and every $\mu\in\Ek_\sigma(\Sigma)$, $\dim_{\mathrm H}(\pi_v)_*\mu=\min\{d,\dim_{\mathrm L}\mu\}$. Let us argue by contradiction. That is, there exists $v\in\widetilde{U}$ and $\mu\in\Ek_\sigma(\Sigma)$ such that $$\dim_{\mathrm H}(\pi_v)_*\mu<\min\{d,\dim_{\mathrm L}\mu\}.$$ Choose $n$ such that 
    $$
    \dim_{\mathrm H}(\pi_v)_*\mu+\frac{1+\delta}{n}<\min\{d,\dim_{\mathrm L}\mu\}.
    $$
    By \eqref{eq:coverergodicmeasures}, there exists $t\in\mathbb Q$, $M\in\N$ and $\nu\in\mathcal V$ such that $\mu\in V(\nu,1/n,t,M)$. But since $v\in \widetilde{U}_{\nu,1/n,t,M}$, this contradicts to \eqref{eq:contra}.
\end{proof}

\section{Proof of Theorem \ref{t:main-result2}}\label{proof-main-result2}
We need the following theorem to finish the proof of Theorem \ref{t:main-result2}. 
\begin{thm}\label{thm: themB for the dominated case}
 Let $\Theta_v=\left(A_1 + t_1(v), \ldots, A_N + t_N(v)\right)$ be a parametrised family of affine IFS with canonical projection $\pi_v$ such that it satisfies \eqref{eq:transineq} and  $\left(A_1, \ldots, A_N\right)$ is dominated. Then, for Lebesgue-almost all $v$, we have $$
\begin{aligned}
\dim_{\mathrm{H}}\left(\pi_{v} E(\vec{\alpha})\right) & =\sup\left \{s \geq 0: \inf _{q \in \mathbb{R}^{3}}\left\{P\left(\log \psi^{s'(s)+q}(\A)\right)- \langle q ,\vec{\alpha} \rangle \right\} \geq 0\right\}\\
&=\sup \left\{\operatorname{dim}_{\mathrm{L}}\mu : \mu \in \M(\Sigma, \sigma) \text { and } \vec{\chi}(\mu)=\vec{\alpha}\right\} \\
&=\sup \left\{\operatorname{dim}_{\mathrm{L}}\mu : \mu \in \Ek_\sigma(\Sigma) \text { and } \vec{\chi}(\mu)=\vec{\alpha}\right\} \\
& =\min _{k \in\{1, \ldots, d\}}\left\{k-1+\frac{h_{\mathrm{top}}(E(\vec{\alpha}))-\sum_{i=1}^{k-1} \alpha_i}{\alpha_k}\right\}
\end{aligned}
$$
for all $\vec{\alpha} \in \mathring{\vec{L}}.$
\end{thm}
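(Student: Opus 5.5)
The plan mirrors the proof of Theorem~\ref{main result for dominated case}, with Theorem~\ref{thm:everyergodic} taking the place of Rapaport's Theorem~\ref{Rapaport}. First I fix the full-measure set of parameters. By Theorem~\ref{thm:everyergodic}, there is a set $G$ of full Lebesgue measure such that for every $v\in G$,
\[
\dim_{\mathrm H}(\pi_v)_*\mu=\min\{d,\dim_{\mathrm L}\mu\}\quad\text{for every }\mu\in\Ek_\sigma(\Sigma).
\]
The key point is that $G$ does not depend on $\mu$, and hence does not depend on $\vec\alpha$. This is exactly what allows the conclusion to hold for all $\vec\alpha\in\mathring{\vec L}$ simultaneously for a.e.\ $v$.

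Next I handle the purely symbolic equalities. Since $\A$ is dominated, Theorem~\ref{thm:usefulequations} already gives the equality of the pressure formula, the supremum of $\dim_{\mathrm L}$ over invariant measures, the supremum over ergodic measures, and the entropy formula with $h_{\mathrm{top}}(E(\vec\alpha))$. None of these involve $v$. It therefore remains to show that $\dim_{\mathrm H}\pi_vE(\vec\alpha)$ equals the common value, which I call $s_0(\vec\alpha)$.

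For the upper bound I apply Theorem~\ref{HD-one side}, which holds for every $v$ and requires no separation. I need $P^*=P$ there, and this follows from domination via Lemma~\ref{psi is almost additive}. Dominated tuples need not be typical, but the proof of Theorem~\ref{HD-one side} only uses the definition of $P^*$, so it applies verbatim.

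For the lower bound, take $v\in G$, $\vec\alpha\in\mathring{\vec L}$ and $s<s_0(\vec\alpha)$.
\begin{itemize}
\item By Proposition~\ref{existence of Bernoulli measure}, there is an $n$-step Bernoulli measure $\nu$ whose average $\tilde\nu$ is ergodic, satisfies $\vec\chi(\tilde\nu)=\vec\alpha$, and has $\dim_{\mathrm L}\tilde\nu\ge s$.
\item By Birkhoff's and Kingman's theorems (Oseledets), $\tilde\nu$-almost every $I$ has Lyapunov exponents $\vec\alpha$. Hence $\tilde\nu(E(\vec\alpha))=1$, and so $(\pi_v)_*\tilde\nu(\pi_vE(\vec\alpha))=1$.
\item Consequently $\dim_{\mathrm H}\pi_vE(\vec\alpha)\ge\dim_{\mathrm H}(\pi_v)_*\tilde\nu=\min\{d,\dim_{\mathrm L}\tilde\nu\}\ge\min\{d,s\}$.
\end{itemize}
Since $\dim_{\mathrm H}\le d$ trivially, and $s_0\le d$ presumably holds in the relevant range (otherwise we take the minimum with $d$, consistent with the convention), letting $s\uparrow s_0(\vec\alpha)$ finishes the argument.

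The main obstacle is the uniformity over $\vec\alpha$. A naive argument would produce an exceptional null set of $v$ depending on $\vec\alpha$, and there are uncountably many $\vec\alpha$. The plan avoids this entirely because Theorem~\ref{thm:everyergodic} is simultaneous over all ergodic measures. The only remaining care points are that the measure supplied by Proposition~\ref{existence of Bernoulli measure} is genuinely ergodic for $\sigma$ (it is, as the average of an $n$-step Bernoulli measure), and the cap $\min\{d,\cdot\}$, which requires checking that $s_0(\vec\alpha)\le d$ or else interpreting the formula with that cap.
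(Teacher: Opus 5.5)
Your proposal is correct and is essentially the paper's proof, with one small difference in the lower bound. As in the paper, you get the upper bound from Theorem~\ref{HD-one side} for every $v$, use the single $\vec\alpha$-independent full-measure parameter set from Theorem~\ref{thm:everyergodic} applied to ergodic measures with $\vec\chi=\vec\alpha$, and take the symbolic equalities from Theorem~\ref{thm:usefulequations}. The only difference is that you use the specific $\tilde\nu$ from Proposition~\ref{existence of Bernoulli measure}, whereas the paper takes the supremum over all such ergodic measures.

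Your two care points are well taken, and the paper's proof glosses over both of them. First, Theorem~\ref{HD-one side} is stated for typical tuples, but its proof only uses $P^*$, which equals $P$ under domination. Second, Theorem~\ref{thm:everyergodic} only gives $\min\{d,\dim_{\mathrm L}\mu\}$, so the argument really proves $\dim_{\mathrm H}\pi_vE(\vec\alpha)=\min\{d,s_0(\vec\alpha)\}$. This can differ from $s_0(\vec\alpha)$ when the entropy is large enough to force $s_0(\vec\alpha)>d$.
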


\begin{proof}
By Theorem \ref{HD-one side}, for every $v\in\R^{dN}$
\begin{equation}\label{1:eq}
\dim_{H}(\pi_{v}(E(\vec{\alpha}))\leq \left\{s \geq 0: \inf _{q \in \mathbb{R}^{d}}\left\{P\left(\log \psi^{s'(s)+q}(\A)\right)- \langle q ,\vec{\alpha} \rangle \right\} \geq 0\right\}.
\end{equation}
By Theorem ~\ref{thm:everyergodic}, there exists a set $U\subset\R^M$ such that $\mathcal{L}(\R^M\setminus U)=0$ and for every $v\in U$
$$
\begin{aligned}
\sup \left\{\operatorname{dim}_{\mathrm{L}}\mu : \mu \in \Ek_\sigma(\Sigma) \text { and } \vec{\chi}(\mu)=\vec{\alpha}\right\}&=\sup \left\{\operatorname{dim}_{\mathrm{H}}\pi_*\mu : \mu \in \Ek_\sigma(\Sigma) \text { and } \vec{\chi}(\mu)=\vec{\alpha}\right\}\\
&\leq \operatorname{dim}_{\mathrm{H}}\left(\pi_{v} E(\vec{\alpha})\right)
\end{aligned}$$
for every $\vec{\alpha}\in\mathring{\vec{L}}$. Then the claim follows by Theorem \ref{thm:usefulequations}.
\end{proof}

Let us recall the transversality lemma from \cite[Proposition~10.4.1]{BSSbook}. 

\begin{lem}\label{lem:transversality}
    Let $\{A_1, \ldots, A_N\} \subset \glr$ be such that $\max_{i\neq j}\|A_i\|+\|A_j\|< 1$. Then the IFS $\Theta_v=(A_1+v_1,\ldots,A_N+v_N)$ satisfies the transversality condition \eqref{eq:transineq} with parameters $v=(v_i)_{i=1}^N\in\R^{dN}$. 
\end{lem}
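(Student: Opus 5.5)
The plan is to reduce the estimate \eqref{eq:transineq} to a volume bound for the preimage of a ball under an affine map in a single translation vector, and then apply Fubini.

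Fix $\mathrm i\neq\mathrm j\in\Sigma$ and put $\mathrm k=\mathrm i\wedge\mathrm j$, $n=|\mathrm k|$, $\mathrm i'=\sigma^n\mathrm i$, $\mathrm j'=\sigma^n\mathrm j$. Then $a:=i'_1\neq b:=j'_1$. Since $\pi_v(\mathrm i)=\sum_{m}\A_{\mathrm i|_{m-1}}v_{i_m}$ is linear in $v=(v_1,\ldots,v_N)\in\R^{dN}$, we have
\[
\pi_v(\mathrm i)-\pi_v(\mathrm j)=\A_{\mathrm k}\,T(v),\qquad T(v):=\pi_v(\mathrm i')-\pi_v(\mathrm j')=\sum_{m\ge1}\A_{\mathrm i'|_{m-1}}v_{i'_m}-\sum_{m\ge1}\A_{\mathrm j'|_{m-1}}v_{j'_m},
\]
and $T\colon\R^{dN}\to\R^d$ is linear. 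Write $T(v)=M v_a+R(\hat v)$, where $\hat v$ denotes all coordinates other than $v_a$ and
\[
M=\partial T/\partial v_a=I+\sum_{m\ge2,\ i'_m=a}\A_{\mathrm i'|_{m-1}}-\sum_{m\ge2,\ j'_m=a}\A_{\mathrm j'|_{m-1}}.
\]

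The key step, and the one I expect to be the main obstacle, is a uniform lower bound $\|Mx\|\ge c\|x\|$ with $c>0$ independent of $\mathrm i',\mathrm j'$. This is where the hypothesis $\max_{i\neq j}\|A_i\|+\|A_j\|<1$ must be used. I would try a Neumann-series argument: every term of the first sum starts with the factor $A_a$ and every term of the second with $A_b$. The aim is to bound the total perturbation norm by a quantity strictly less than $1$ in terms of $\|A_a\|+\|A_b\|$ and $\lambda=\max_i\|A_i\|<1$.

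If the crude bound $\|A_a\|/(1-\lambda)+\|A_b\|/(1-\lambda)$ is not below $1$, I would switch to the direction that moves $v_a$ and $v_b$ simultaneously. Alternatively I would follow the argument of \cite[Proposition~10.4.1]{BSSbook}: compare the orbit with the worst case, where the perturbation series is dominated by the geometric sums along $\mathrm i'$ and $\mathrm j'$ and the condition $\|A_a\|+\|A_b\|<1$ is exactly what keeps the operator $I+E$ invertible with $\|(I+E)^{-1}\|\le C_0$.

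Granting the bound on $M$, fix $\hat v$ in the projection of $U$. The set of $v_a$ with $\|\A_{\mathrm k}(Mv_a+R(\hat v))\|<r$ is $M^{-1}$ applied to a translate of $\A_{\mathrm k}^{-1}B(0,r)$. The set $\A_{\mathrm k}^{-1}B(0,r)$ is an ellipsoid with semi-axes $r/\alpha_\ell(\A_{\mathrm k})$, $\ell=1,\ldots,d$. Intersected with the bounded set $\{v_a:(v_a,\hat v)\in U\}$, which has diameter at most $\mathrm{diam}\,U$, its image under $M^{-1}$ has $d$-dimensional volume at most
\[
C_0^d\prod_{\ell=1}^d\min\{2\,\mathrm{diam}\,U,\,2r/\alpha_\ell(\A_{\mathrm k})\},
\]
using that $\|M^{-1}\|\le C_0$. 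Integrating over $\hat v$ by Fubini yields \eqref{eq:transineq} with $C$ depending only on $U$, $d$ and $C_0$.
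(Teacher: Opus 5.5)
\textbf{There is a genuine gap.} Your reduction to the linear map $T(v)=\pi_v(\mathrm i')-\pi_v(\mathrm j')$ is sound, and so is the Fubini/ellipsoid estimate. (The paper gives no argument of its own here; it imports the lemma from \cite[Proposition~10.4.1]{BSSbook}.)

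The gap is the step you flag as the main obstacle. As you formulate it, the step is false: under $\max_{i\neq j}\|A_i\|+\|A_j\|<1$ the matrix $M=\partial T/\partial v_a$ need not be invertible at all. Take $d=1$ and
\[
A_1=\frac{9}{10},\qquad A_2=\frac{9}{100},\qquad A_3=-\frac{1}{90},
\]
so all pairwise sums of norms are $<1$. Take $\mathrm i'=1\,3\,1\,1\,1\cdots$ and $\mathrm j'=2\,1\,1\,1\cdots$, so $a=1$, $b=2$. Your formula gives
\[
M=1+\frac{A_1A_3}{1-A_1}-\frac{A_2}{1-A_1}=1-\frac{1}{10}-\frac{9}{10}=0 .
\]
For this pair the non-degeneracy comes from $v_b$, not from $v_a$. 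So no Neumann-series argument can work for the single coordinate $v_a$; your crude bound $\|M-I\|\le(\|A_a\|+\|A_b\|)/(1-\|A_a\|)$ closes only when $2\|A_a\|+\|A_b\|<1$.

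Your fallbacks do not close the gap either.
\begin{itemize}
\item For the unweighted direction $(v_a,v_b)\mapsto(v_a+x,v_b-x)$ you get $2I+E$ with $\|E\|\le(\|A_a\|+\|A_b\|)/(1-\max\{\|A_a\|,\|A_b\|\})$. For $\|A_a\|=0.9$, $\|A_b\|=0.09$ this bound is $9.9$, not $<2$. The hypothesis explicitly allows one matrix of norm close to $1$.
\item ``Follow the argument of \cite{BSSbook}'' is not an argument.
\end{itemize}

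The missing idea is to weight the two coordinates by $1-\|A_a\|$ and $1-\|A_b\|$. Put $c_a=1-\|A_a\|$, $c_b=-(1-\|A_b\|)$, and $c_\ell=0$ otherwise. Since $|c_\ell|+\|A_\ell\|\le 1$ for every $\ell$, induction on truncations gives
\[
\sum_{m\ge1}|c_{w_m}|\prod_{p<m}\|A_{w_p}\|\le 1 \quad\text{for every } \mathrm w\in\Sigma .
\]
Moving $v_a$ by $c_a x$ and $v_b$ by $c_b x$ changes $T$ by $Dx$, where
\[
D=\sum_{m\ge1}c_{i'_m}\A_{\mathrm i'|_{m-1}}-\sum_{m\ge1}c_{j'_m}\A_{\mathrm j'|_{m-1}}=(2-\|A_a\|-\|A_b\|)I+A_aX-A_bY .
\]
Here $X$ and $Y$ are the analogous weighted sums along $\sigma\mathrm i'$ and $\sigma\mathrm j'$, so $\|X\|,\|Y\|\le 1$. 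Hence
\[
\|Dx\|\ge 2\bigl(1-\|A_a\|-\|A_b\|\bigr)\|x\|,
\]
uniformly in $\mathrm i',\mathrm j'$. In the example above, $D=\frac{91}{100}$.

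With the linear change of variables $v_a=c_a x$, $v_b=c_b x+y$, your Fubini argument then goes through in the variable $x$, with $D$ in place of $M$. The bound $\prod_\ell\min\{\mathrm{diam}\,U,\, r/\alpha_\ell(\A_{\mathrm k})\}$ also uses $\|D\|\le 2$, because the ellipsoid must be intersected with the image under $D$ of the slice of $U$.
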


Now, we finish the paper by proving Theorem~\ref{t:main-result2}. 

\begin{proof}[Proof of Theorem \ref{t:main-result2}]
Let $\{A_1, \ldots, A_N\} \subset \glr$ be such that $\max_{i\neq j}\|A_i\|+\|A_j\|< 1$, and  $\left(A_1, \ldots, A_N\right)$ is typical, and let $\Theta_v=\left(A_1 + v_1, \ldots, A_N + v_N\right)$ be a parametrised family of affine IFS in $\R^d$ with parameters $v=(v_i)_{i=1}^N$. Then by Lemma~\ref{lem:transversality}, $\Theta_v$ satisfies the transversality assumption. In particular, the parametrised families of the IFSs $\Theta_v^{(n)}=(\A_{I}+t_I(v))_{I\in\Sigma_n^{\mathcal{D}}}$ satisfy the transversality assumption for every $n\geq2K_0$, where $t_I(v)=\sum_{n=1}^{|I|} \A_{\left.\mathrm{i}\right|_{n-1}} v_{i_n}$.

By Theorem \ref{thm: themB for the dominated case},  we have that there exists $U\subseteq\R^{dN}$ such that $\mathcal{L}(\R^{dN}\setminus U)=0$ and for every $v\in U$
$$
\begin{aligned}
\dim_{\mathrm{H}}\left(\pi_{v} E^{n, \mathcal{D}}( n \vec{\alpha}) \right) & =\sup\left \{s \geq 0: \inf _{q \in \mathbb{R}^{d}}\left\{P\left(\log \psi^{s'(s)+q}(\B)\right)- \langle q ,n\vec{\alpha} \rangle \right\} \geq 0\right\}\\
&=\sup \left\{\operatorname{dim}_{\mathrm{L}}\mu : \mu \in \M((\Sigma_{n}^{\D})^{\N}, f) \text { and } \vec{\chi}(\mu, \B)=n\vec{\alpha}\right\} \\
&=\sup \left\{\operatorname{dim}_{\mathrm{L}}\mu : \mu \text { is a Bernoulli measure on }(\Sigma_{n}^{\D})^{\N} \text { and } \vec{\chi}(\mu)=n\vec{\alpha}\right\} \\
& =\min _{k \in\{1, \ldots, d\}}\left\{k-1+\frac{h_{\text{top}}(E (f, E^{n, \mathcal{D}}( n \vec{\alpha})) -n\sum_{i=1}^{k-1}\alpha_i}{n\alpha_k}\right\}
\end{aligned}
$$
for every $\vec{\alpha}\in\mathring{\vec{L}}$ and for every $n\geq2K_0$. Using Lemma \ref{lem: the relation between topological entropy} and Lemma~\ref{lem:VP bound}, we have
$$\begin{aligned}
\lim_{n\to\infty}\dim_{\mathrm{H}}\left(\pi_v E^{n, \mathcal{D}}( n \vec{\alpha}) \right)
&\leq 
 \min_{k \in\{1, \ldots, d\}}
 \left\{
 k-1+
 \frac{
 h_{\mathrm{top}}\!\left(\sigma,E(\vec\alpha)\right)
 -\sum_{i=1}^{k-1}\alpha_i
 }{\alpha_k}
 \right\}\\
 &\leq\sup\left \{s \geq 0: \inf _{q \in \mathbb{R}^{d}}\left\{P\left(\log \psi^{s'(s)+q}(\A)\right)- \langle q ,\vec{\alpha} \rangle \right\} \geq 0\right\}
\end{aligned}$$
for every $\vec{\alpha}\in\mathring{\vec{L}}$ and $v\in U$. On the other hand, Theorem \ref{continuity_potential} gives
$$
\begin{aligned}
&\lim_{n\to\infty}\dim_{\mathrm{H}}\left(\pi_v E^{n, \mathcal{D}}( n \vec{\alpha}) \right)\\
&\qquad\qquad=\lim_{n\to\infty}\sup\left \{s \geq 0: \inf _{q \in \mathbb{R}^{d}}\left\{P\left(\log \psi^{s'(s)+q}(\B)\right)- \langle q ,n\vec{\alpha} \rangle \right\} \geq 0\right\}\\
&\qquad\qquad=\sup\left \{s \geq 0: \inf _{q \in \mathbb{R}^{d}}\left\{P\left(\log \psi^{s'(s)+q}(\A)\right)- \langle q ,\vec{\alpha} \rangle \right\} \geq 0\right\}.
\end{aligned}
$$
Finally, by Theorem~\ref{HD-one side}, we have
$$
\begin{aligned}
&\sup\left \{s \geq 0: \inf _{q \in \mathbb{R}^{d}}\left\{P\left(\log \psi^{s'(s)+q}(\A)\right)- \langle q ,\vec{\alpha} \rangle \right\} \geq 0\right\}\\
&\qquad=\lim_{n\to\infty}\dim_{\mathrm{H}}\left(\pi_v E^{n, \mathcal{D}}( n \vec{\alpha}) \right)\\
&\qquad\leq\dim_{\mathrm{H}}\pi_v E(\vec{\alpha})\\
&\qquad\leq\sup\left \{s \geq 0: \inf _{q \in \mathbb{R}^{d}}\left\{P\left(\log \psi^{s'(s)+q}(\A)\right)- \langle q ,\vec{\alpha} \rangle \right\} \geq 0\right\}.
\end{aligned}
$$
for every $\vec{\alpha}\in\mathring{\vec{L}}$ and $v\in U$. Finally, for any $\mu' \in \M((\Sigma_{n}^{\D})^{\N} , f)$, Proposition \ref{relation between entropies and LE} provides a measure $\mu \in \M(\Si, \sigma)$ such that $\diml \mu=\diml \mu'$, which completes the proof. 
\end{proof}

\subsection*{Acknowledgements.}
RM acknowledges the hospitality of Budapest University of Technology and Economics and the support of  the Knut and Alice
Wallenberg Foundation and the Swedish Research
Council grant 10465132. BB acknowledges the hospitality of Uppsala University and was supported by the grants NKFI FK134251, K142169, and the grant NKFI KKP144059
“Fractal geometry and applications”.

\bibliographystyle{acm}
\bibliography{Hausdorff.Spectrum}

\end{document}